\theoremstyle{plain}
\theoremstyle{plain}
\newtheorem{theorem}{Theorem}[section]
\newtheorem{corollary}[theorem]{Corollary}
\newtheorem{definition}[theorem]{Definition}
\newtheorem{example}[theorem]{Example}
\newtheorem{lemma}[theorem]{Lemma}
\newtheorem{notation}[theorem]{Notation}
\newtheorem{problem}[theorem]{Problem}
\newtheorem{proposition}[theorem]{Proposition}
\newtheorem{remark}[theorem]{Remark}
\newcommand{\LeftEqNo}{\let\veqno\@@leqno}
 \numberwithin{equation}  {section}
\begin{document}
\title[A Beurling-Blecher-Labuschagne  theorem for semifinite Hardy spaces]{A Beurling-Blecher-Labuschagne   theorem \\ for noncommutative  Hardy spaces associated \\ with semifinite von Neumann algebras}

\author{Lauren Sager}
\address{Lauren Sager \\
        Demartment of Mathematics and Statistics \\
         University of New Hampshire\\
         Durham, NH 03824;   Email: lbq32@wildcats.unh.edu}

\begin{abstract}
 In 2008, Blecher and Labuschagne extended Beurling's classical theorem
  to $H^\infty$-invariant subspaces of  $L^p(\mathcal{M},\tau)$ for a finite von Neumann algebra $\mathcal{M}$ with a finite,
  faithful, normal tracial state $\tau$ when $1\le p\le \infty$. 
   In this paper,   using Arveson's non-commutative Hardy space  $H^\infty$ in relation to a von Neumann algebra
   $\mathcal{M}$
   with a semifinite, faithful, normal tracial weight $\tau$, we prove a Beurling-Blecher-Labuschagne theorem
   for $H^\infty$-invariant spaces of $L^p(\mathcal{M},\tau)$ when $0<p\leq\infty$.  The proof of the main result
   relies on proofs of density theorems for $L^p(\mathcal{M},\tau)$ and semifinite versions of several other known
   theorems from the finite case.  
     Using the main result, we are   able to completely characterize  all $H^\infty$-invariant
      subspaces of $L^p(\mathcal M\rtimes_\alpha \mathbb Z,\tau)$, where $\mathcal M\rtimes_\alpha \mathbb Z $
      is a
      crossed product  of a semifinite von Neumann algebra $\mathcal{M}$ by the integer group $\mathbb Z$ and $H^\infty$ is a non-selfadjoint crossed product  of   $\mathcal{M}$ by   $\mathbb Z^+$ . As an example, we   characterize
       all $H^\infty$-invariant
      subspaces of the Schatten $p$-class $S^p(\mathcal{H})$, where $H^\infty$ is the lower triangular subalgebra of $B(\mathcal H)$, for
      each $0<p\leq\infty$.
\end{abstract}

\subjclass[2000]{Primary 32A35, 46L52; Secondary 47A15, 47L65}
\keywords{Beurling's theorem, Schatten p-classes, semifinite von Neuman algebra, non-commutative Hardy space, crossed products of von Neumann algebras, $L^p$ spaces, invariant subspaces}

\maketitle

\section{Introduction}

Let $\mathcal H$ be an infinite dimensional  separable Hilbert space with an orthonormal base  $\{e_m\}_{m\in\mathbb Z}$, and   $B(\mathcal H)$ be the set of all bounded linear operators on $\mathcal {H}$.
 Let $\tau=Tr$ be the usual trace on $B(\mathcal H)$, i.e.
  $$
  \tau(x)=\sum_{i\in\mathbb Z} \langle xe_m, e_m\rangle , \qquad \text{ for all positive } x   \text { in }    B(\mathcal H).
  $$
 For each $0<p<\infty$,  the Schatten $p$-class $S^p(\mathcal H)$ consists all these elements $x$ in $B(\mathcal H)$ such that
 $\tau(|x|^p)<\infty$. It is well-known that $S^p(\mathcal H)$ is a complete metric space (a Banach space when $p\ge 1$ and a Hilbert space when $p=2$). Moreover, $S^p(\mathcal H)$ is  a two sided ideal of $B(\mathcal H)$.

Let
$$
\mathcal A =\{ x\in B(\mathcal H) : \langle xe_m, e_n\rangle =0, \ \ \forall n<m\}
$$ be the lower triangular subalgebra of $B(\mathcal H)$. In the paper, we are interested in answering the following
question, which is implicitly asked by   McAsey,  Muhly and   Saito
in Example 2.6 of \cite{MMS}.

\begin{problem} \label{prob1.1}
Given a closed subspace $\mathcal{K}$ of the Schatten p-class $S^p(\mathcal{H})$ where $0<p<\infty$, such that  $\mathcal{K}$ satisfies $\mathcal{AK}\subseteq\mathcal{K}$, how can we characterize the subspace $\mathcal{K}$?
\end{problem}

The answer to Problem \ref{prob1.1} is closely related to our
generalization of the classical Beurling theorem for a Hardy space.
Recall  the classical Beurling theorem for invariant subspaces as
follows. Let $\mathbb{T}$ be the unit circle, and let $\mu$ be the
measure on $\mathbb{T}$ such that $d\mu=\frac{1}{2\pi}d\theta$.  Let
$L^\infty(\mathbb{T},\mu)$ be the commutative von Neumann algebra on
$\mathbb{T}$, and define
$L^2(\mathbb{T},\mu)$ to be the closure of $L^\infty(\mathbb{T},
\mu)$ under the $\|\cdot\|_2$.  Then $L^2(\mathbb{T},\tau)$ is a Hilbert space with
orthonormal basis $\{z^n:n\in\mathbb{Z}\}$.  Let
$\displaystyle H^2=\overline{span\{z^n:n\ge 0\}}^{\|\cdot \|_2}\subseteq L^2((\mathbb{T},\mu)$ and $\displaystyle H^\infty =H^2\cap L^\infty(\mathbb{T},
\mu)$.  If we define $M_\psi(f)=\psi f$
for every $f\in L^2(\mathbb{T},\mu)$, it is easy to show that
$L^\infty(\mathbb{T},\mu)$ has a representation onto
$\mathcal{B}(L^2(\mathbb{T},\mu))$ by the map $\psi\rightarrow
M_\psi$.  Therefore,
$L^\infty(\mathbb{T},\mu)$ and $H^\infty$ can be assumed to act
naturally on $L^2(\mathbb{T},\mu)$ by multiplication on the left (or
right).  The classical Beurling's theorem, proven in 1949 by A.
Beurling in \cite{B}, states the following: {\em  If $\mathcal{W}$
is a nonzero closed, $H^\infty$-invariant subspace (or,
equivalently, $z\mathcal{W}\subseteq \mathcal{W}$ for every $z\in H^\infty$) of $H^2$, then
$\mathcal{W}=\psi H^2$ for some $\psi\in H^\infty$ with $|\psi |=1$
$a.e.(\mu)$. }

Then we define $L^p(\mathbb{T},\tau)$ to be the closure of $L^\infty(\mathbb{T},\tau)$ under $\|\cdot\|_p$, and $H^p=\{f\in L^p(\mathbb{T},\mu) : \int_\mathbb{T} f(e^{i\theta})e^{in\theta} d\mu(\theta) = 0$ for $n\in\mathbb{N}\}$ for $1\leq p<\infty$.  Beurling's theorem has been extended for $H^\infty$-invariant subspaces in Hardy spaces $H^p$ for $1\leq p \leq \infty$.  (See, for example, \cite{Bo}, \cite{Halmos}, \cite{He}, \cite{HL}, \cite{Ho}, \cite{Sr}, and others).  The classical Beurling's theorem has been extended in many other ways as well.

One such extension comes from the work of  D. Blecher and L.
Labuschagne in \cite{BL2}.  We recall the construction of
$L^p(\mathcal{M},\tau)$.  Let $\mathcal{M}$ be a semifinite von
Neumann algebra, and let $\tau$ be a faithful, normal tracial weight
on $\mathcal{M}$ (when $\tau(I)<\infty$, $\mathcal{M}$ is finite).
Let $\mathcal I$ be the set of elementary operators in $\mathcal M$ (when $\mathcal{M}$ is finite, $\mathcal I=\mathcal M$).
Then define a mapping from $\mathcal{I}$ to $[0,\infty)$ by
$\|x\|_p=(\tau(|x|^p))^{1/p}$ for every $x\in \mathcal I$, and where
$|x|=\sqrt{x^*x}$.  It is nontrivial to prove that when $1\leq
p<\infty$, $\|\cdot\|_p$ defines a norm on $\mathcal{I}$, which we
call the p-norm.  We may then define
$L^p(\mathcal{M},\tau)=\overline{\mathcal{I}}^{\|\cdot\|_p}$.  We
let $L^\infty(\mathcal{M},\tau)=\mathcal{M}$, and this space acts
naturally on $L^p(\mathcal{M},\tau)$ by left (or right)
multiplication.

We then recall the definition of Arveson's non-commutative Hardy space from \cite{Arv}.  If $\mathcal{M}$ is a von Neumann algebra, with faithful, normal tracial weight $\tau$, let $\mathcal{A}\subseteq \mathcal{M}$ be a weak* closed subalgebra.  Then let $\mathcal{D}=\mathcal{A}\cap \mathcal{A}^*$ be a von Neumann subalgebra of $\mathcal{M}$.  Then there exists $\Phi:\mathcal{M}\rightarrow \mathcal{D}$, a faithful, normal, trace-preserving conditional expectation, which can be extended to $\Phi:L^1(\mathcal{M},\tau)\rightarrow L^1(\mathcal{D},\tau)$.  Then $\mathcal{A}$ is called a non-commutative Hardy space if (1) $\Phi(xy)=\Phi(x)\Phi(y)$ for every $x,y\in\mathcal{A}$; (2) $\mathcal{A}+\mathcal{A}^*$ is weak* dense in $\mathcal{M}$; (3) $\tau(\Phi(x))=\tau(x)$ for every $x\in \mathcal{M}$.

Blecher and Labuschagne proved the following theorem for finite von
Neumann algebras in \cite{BL2}. {\em  Let $\mathcal M$ be a finite
von Neumann algebra with a faithful, tracial, normal state $\tau$,
and $ H^{\infty}$ be a maximal subdiagonal subalgebra of
$\mathcal{M}$ with $\mathcal D=H^\infty\cap (H^\infty)^*$. Suppose
that $\mathcal K$ is a closed $H^{\infty}$-right-invariant subspace
of $L^p(\mathcal{M},\tau),$ for some $1\leq p\leq \infty.$ (For
$p=\infty$ it is assumed that $\mathcal K$ is weak* closed.) Then
$\mathcal K$
may be written as a column $L^{p}$-sum $\mathcal K= \mathcal Z  \oplus^{col}(\oplus^{col}%
_{i}u_{i}H^{p})$, where $ \mathcal Z $ is a closed (indeed weak*
closed if $p=\infty$)
  subspace of $L^{p}(\mathcal{M},\tau)$ such that $ \mathcal Z =[ \mathcal Z H_{0} ^{\infty}]_{p}$, and where $u_{i}$ are partial
isometries in ${\mathcal{M}}\cap \mathcal K$ satisfying certain
conditions (For more details, see \cite{BL2}.})Here $\oplus^{col}%
_{i}u_{i}H^{p}$ and $ \mathcal Z =[ \mathcal Z H_{0} ^{\infty}]_{p}$
are of type 1, and  type 2 respectively (also see \cite{NW} for
definitions of invariant subspaces of different types).


Examples of finite von Neumann algebras include the spaces
$M_n(\mathbb{C})$ of all $n\times n$ matrices with  complex entries
when $1\leq n<\infty$.  However, if   $\mathcal H$ is an infinite
dimensional separable Hilbert space  and we   view $B(\mathcal H)$ as
$M_\infty(\mathbb{C})$,  the set of all (bounded) $\infty \times
\infty$ matrices with complex entries,  then  $B(\mathcal H)$ is a
semifinite von Neumann algebra, and   no longer satisfies the
hypothesis of the Beurling-Blecher-Labuschagne theorem.

In this paper, we therefore consider a version of Blecher and Labuschagne's Beurling's theorem for semifinite von Neumann algebras.  We seek to characterize $H^\infty$-invariant spaces of  $L^p(\mathcal{M},\tau)$ spaces.  Adapting Blecher and Labuschagne's theorem to the semifinite case, we prove the following result:



{\renewcommand{\thetheorem}{\ref{theorem4.6}}
    \begin{theorem}  Let $1\le p<\infty$.
Let $\mathcal M$ be a von Neumann algebra with a faithful, normal,
semifinite tracial weight $\tau$, and $H^\infty$  be  a semifinite
subdigonal subalgebra   of $\mathcal M$ (see Definition
\ref{Definition2.7}).  Let $\mathcal D=H^\infty\cap (H^\infty)^*$.
Assume that $\mathcal K  $ is a closed subspace of $L^p(\mathcal
M,\tau)$ such that $H^\infty \mathcal K \subseteq \mathcal K$.

             Then there exist   a   closed subspace $Y$ of $L^p(\mathcal M,\tau)$ and a family $\{u_\lambda\}_{\lambda \in\Lambda}$ of partial isometries in
             $ \mathcal{M}$  such that:
            \begin{enumerate}
              \item  [(i)] $u_\lambda Y^*=0$ for all ${\lambda\in\Lambda}$.
\item  [(ii)] $u_\lambda u_\lambda^*\in \mathcal D$ and $u_\lambda u_\mu^*=0$
for all $\lambda, \mu\in \Lambda$ with $\lambda\ne \mu$.
\item  [(iii)] $Y= [H^\infty_0Y]_p$.
                \item [(iv)] $\mathcal K=Y \oplus^{row} (\oplus^{row}_{\lambda\in\Lambda } H^p u_\lambda)$
            \end{enumerate} Here $\oplus^{row}$ is the row sum of
            subspaces
            defined in Definition \ref{definition2.15}.
        \end{theorem}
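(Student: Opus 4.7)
The plan is to extend the Blecher--Labuschagne argument from the finite to the semifinite setting by combining an internal maximality argument with truncation by finite-trace projections from $\mathcal{D}$. First I would isolate the type 2 piece: define $Y$ to be the largest closed subspace of $\mathcal{K}$ satisfying $Y = [H^\infty_0 Y]_p$, constructed concretely as $Y = \bigcap_{n\ge 1} [(H^\infty_0)^n \mathcal{K}]_p$ or as the closed linear span of the union of all such self-reproducing subspaces of $\mathcal{K}$. A direct manipulation with the $\|\cdot\|_p$-closure verifies property (iii) and the maximality of $Y$ among subspaces of $\mathcal{K}$ with this property.

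Next I would invoke Zorn's lemma on the partially ordered set of families $\{u_\lambda\}_{\lambda \in \Lambda}$ of partial isometries in $\mathcal{M}$ satisfying (i), (ii) and $H^p u_\lambda \subseteq \mathcal{K}$. A maximal family exists by a routine chain argument, and the orthogonality $u_\lambda u_\mu^* = 0$ together with $u_\lambda Y^* = 0$ ensures that the formal sum $Y + \sum_\lambda H^p u_\lambda$ assembles into a genuine row $L^p$-sum $\mathcal{K}' := Y \oplus^{\mathrm{row}}\bigl(\oplus^{\mathrm{row}}_{\lambda} H^p u_\lambda\bigr) \subseteq \mathcal{K}$.

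The crux is showing $\mathcal{K}' = \mathcal{K}$. Assume instead $\mathcal{K}' \subsetneq \mathcal{K}$ and pick $x \in \mathcal{K} \setminus \mathcal{K}'$. Using the $L^p$-density theorems established earlier in the paper, I would approximate $x$ by elements of $\mathcal{M} \cap L^p(\mathcal{M},\tau)$ and truncate on the left by an increasing net $\{e_\alpha\} \subseteq \mathcal{D}$ of finite-trace projections with $e_\alpha \uparrow 1$. On each corner $e_\alpha \mathcal{M} e_\alpha$ the semifinite subdiagonal structure restricts to a maximal subdiagonal subalgebra of a finite von Neumann algebra, so the original Blecher--Labuschagne theorem applies to the induced invariant subspace. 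This yields, for sufficiently large $\alpha$, a wandering element not in $\mathcal{K}'$, and extracting its left polar factor produces a candidate partial isometry.

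The main obstacle will be passing from these local wandering elements to a single global partial isometry $u \in \mathcal{M}$ whose left support lies in $\mathcal{D}$ and which is orthogonal to every existing $u_\lambda$ and to $Y$ on the right. This requires tracking polar decompositions compatibly as $\alpha$ varies and using the faithfulness and trace-preservation of the conditional expectation $\Phi$, together with the semifinite density lemmas, to guarantee that the resulting $u$ actually lies outside $\mathcal{K}'$ yet still satisfies $H^p u \subseteq \mathcal{K}$. Once such a $u$ is produced, adjoining it to $\{u_\lambda\}$ contradicts maximality and forces $\mathcal{K} = \mathcal{K}'$, completing the decomposition (iv).
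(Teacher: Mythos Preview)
Your approach differs substantially from the paper's, and the gap you yourself flag as ``the main obstacle'' is genuine and not resolved by what you have written.

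The paper never attempts a direct Zorn argument in $L^p$. Instead it passes from $\mathcal{K}\subseteq L^p$ to the weak$^*$-closure $\mathcal{K}_1=\overline{\mathcal{K}\cap\mathcal{M}}^{w^*}\subseteq\mathcal{M}$, applies the $p=\infty$ result (Theorem~\ref{theorem4.5}), which in turn is built on the $p=2$ case (Theorem~\ref{theorem3.1}), and then pushes the resulting decomposition back to $L^p$ via the density results (Propositions~\ref{proposition4.1}, \ref{proposition4.2}, \ref{prop4.4}). All of the hard structural work---identifying the wandering subspace, showing $XX^*\subseteq L^1(\mathcal{D},\tau)$, extracting partial isometries from polar decompositions---is done in $L^2$, where orthogonal complements are available (Lemma~\ref{lemma3.4}).

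Your plan tries to do this directly in $L^p$, and that is where it breaks. Restricting to a corner $e_\alpha\mathcal{M}e_\alpha$ and invoking the finite Blecher--Labuschagne theorem gives a decomposition of $e_\alpha\mathcal{K}e_\alpha\subseteq L^p(e_\alpha\mathcal{M}e_\alpha,\tau)$, not of $e_\alpha\mathcal{K}$; the partial isometries you obtain live in $e_\alpha\mathcal{M}e_\alpha$ and there is no mechanism offered for making them cohere as $\alpha$ varies into a single $u\in\mathcal{M}$ with $H^p u\subseteq\mathcal{K}$ and $uu^*\in\mathcal{D}$. The polar-decomposition step that does this globally in the paper (Lemma~\ref{lemma3.4}(iii)) depends on knowing $|x^*|\in L^2(\mathcal{D},\tau)$, which is proved using the orthogonality $X\perp[H^\infty_0\mathcal{K}]_2$---a Hilbert-space fact with no $L^p$ analogue for $p\neq 2$. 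Without a replacement for that step, maximality of your family $\{u_\lambda\}$ does not force $\mathcal{K}'=\mathcal{K}$: nothing rules out the possibility that $\mathcal{K}\setminus\mathcal{K}'$ is nonempty yet contains no element whose polar part is the sort of partial isometry you need.

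In short, the paper's detour through $p=2$ is not incidental; it is exactly what supplies the wandering-subspace argument that your direct approach lacks.
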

}

However, many of the methods used by Blecher and Labuschagne do not apply directly when $\mathcal{M}$ is a semifinite von Neumann algebra.  Thus, we prove a density theorem for semifinite von Neumann algebras through a series of lemmas and propositions.

{\renewcommand{\thetheorem}{\ref{proposition4.1}}

    \begin{proposition}
Let $\mathcal M$ be a von Neumann algebra with a faithful, normal,
semifinite tracial weight $\tau$, and $H^\infty$  be  a semifinite
subdigonal subalgebra   of $\mathcal M$. Let $1\le p <\infty$.
Assume that $\mathcal K $ is  a closed subspace in $
L^p(\mathcal{M},\tau)$ such that $H^\infty \mathcal K \subseteq
\mathcal K$. Then the following statements are true.
            \begin{enumerate}
                \item [(i)]$\mathcal K\cap\mathcal{M} =\overline{\mathcal K\cap\mathcal{M}}^{w^*}\cap L^p(\mathcal{M},\tau)$.
                \item [(ii)]  $\mathcal K = [\mathcal K \cap \mathcal{M}]_p$.

            \end{enumerate}
     \end{proposition}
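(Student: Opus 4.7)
The plan is to establish (ii) first, via a spectral truncation argument approximating arbitrary $x \in \mathcal{K}$ by bounded elements of $\mathcal{K}$, and then to deduce (i) by a Hahn-Banach duality argument combined with the interplay between the $w^*$-topology on $\mathcal{M}$ and the $L^p$-norm.

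For (ii), I would fix $x \in \mathcal{K}$ and use its polar decomposition $x = u|x|$ with $u \in \mathcal{M}$ a partial isometry and $|x| = (x^*x)^{1/2} \in L^p(\mathcal{M},\tau)$. For each $n \in \mathbb{N}$, set $q_n = \chi_{[1/n,n]}(|x|)$ and $e_n = \chi_{[1/n,n]}(|x^*|)$; these are finite-trace projections in $\mathcal{M}$, and by the polar identity $x\,f(|x|) = f(|x^*|)\,x$ one has $xq_n = e_n x \in \mathcal{M} \cap L^p$, with $xq_n \to x$ in the $L^p$-norm by dominated convergence against the spectral measure of $|x|$. The crucial step, and the main obstacle, is to verify $xq_n \in \mathcal{K}$: since $e_n$ is a self-adjoint spectral projection of $|x^*|$ generally lying outside $H^\infty$, the left-invariance $H^\infty \mathcal{K} \subseteq \mathcal{K}$ alone does not suffice. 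I would exploit the $w^*$-density of $H^\infty + (H^\infty)^*$ in $\mathcal{M}$ to approximate $e_n$ by elements of $H^\infty$, and then transfer this into an $L^p$-approximation of $e_n x$ by elements of $H^\infty x \subseteq \mathcal{K}$, leveraging the fact that $x \in L^p$ provides the requisite integrability for the left-multiplication map.

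For (i), the inclusion $\mathcal{K} \cap \mathcal{M} \subseteq \overline{\mathcal{K} \cap \mathcal{M}}^{w^*} \cap L^p$ is immediate. For the reverse, let $y \in \overline{\mathcal{N}}^{w^*} \cap L^p$ with $\mathcal{N} := \mathcal{K} \cap \mathcal{M}$, and suppose for contradiction that $y \notin \mathcal{K}$. By Hahn-Banach in $L^p$ there exists $\phi \in L^{p'}$ with $\phi(\mathcal{K}) = 0$ and $\tau(y\phi) \neq 0$, so in particular $\phi(\mathcal{N}) = 0$. Choose a net $y_\alpha \in \mathcal{N}$ with $y_\alpha \to y$ in $w^*$, arranging via a spectral-projection truncation that $\sup_\alpha \|y_\alpha\|_p < \infty$. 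Approximate $\phi$ by $\phi_m \in L^1 \cap L^{p'}$ (dense in $L^{p'}$ for $1 < p < \infty$, with the endpoint $p=1$ handled separately using the $w^*$-density of $L^1 \cap L^\infty$ in $\mathcal{M}$); combining $w^*$-convergence against $L^1$ with the uniform $L^p$-bound on $\{y_\alpha\}$ via the triangle-inequality estimate $|\tau((y - y_\alpha)\phi)| \leq (\|y\|_p + \sup_\alpha\|y_\alpha\|_p)\|\phi - \phi_m\|_{p'} + |\tau((y - y_\alpha)\phi_m)|$ yields $\tau(y\phi) = \lim_\alpha \tau(y_\alpha \phi) = 0$, contradicting $\tau(y\phi) \neq 0$.
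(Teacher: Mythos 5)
Your outline correctly identifies the two pressure points, but at both of them the mechanism you propose does not close the gap, and in each case the paper uses a different (and essential) device.

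For (ii), the obstacle you flag is real and your proposed fix fails. Writing $xq_n=e_nx$ with $q_n=\chi_{[1/n,n]}(|x|)$, $e_n=\chi_{[1/n,n]}(|x^*|)$ does give elements of $\mathcal M\cap L^p(\mathcal M,\tau)$ converging to $x$, but neither right multiplication by $q_n$ nor left multiplication by $e_n$ preserves $\mathcal K$, and the weak$^*$-density of $H^\infty+(H^\infty)^*$ in $\mathcal M$ cannot repair this: approximants of $e_n$ have the form $a+b^*$ with $a,b\in H^\infty$, and $(a+b^*)x=ax+b^*x$ leaves $\mathcal K$ because of the $b^*x$ term; approximating $e_n$ by elements of $H^\infty$ alone is impossible in general since $H^\infty$ is weak$^*$-closed and proper. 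The paper's actual mechanism is the Riesz--Szeg\H{o} type factorization (Lemma \ref{lemma2.12}, resting on Theorem 3.1 of \cite{BX}): for a projection $e\in\mathcal D$ with $0<\tau(e)<\infty$ there exist $h_1\in eH^\infty e$ and $h_2\in eH^pe$ with $h_1h_2=h_2h_1=e$ and $h_1ex\in\mathcal M$. Then for $a_n\in eH^\infty e$ with $\|a_n-h_2\|_p\to0$ one has $a_nh_1ex\in H^\infty\mathcal K\cap\mathcal M\subseteq\mathcal K\cap\mathcal M$ and $\|a_nh_1ex-ex\|_p\le\|a_n-h_2\|_p\,\|h_1ex\|\to0$, which is what makes $ex$ (and hence, letting $e$ run through the net of Lemma \ref{lemma2.13}, $x$ itself) reachable from $\mathcal K\cap\mathcal M$. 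Without some such ``outer factor'' $h_1$ there is no route from $L^p$-elements of $\mathcal K$ to bounded elements of $\mathcal K$.

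For (i), the step ``arranging via a spectral-projection truncation that $\sup_\alpha\|y_\alpha\|_p<\infty$'' is a genuine gap: an element of the weak$^*$ closure of $\mathcal N=\mathcal K\cap\mathcal M$ need not be the weak$^*$ limit of any net from $\mathcal N$ that is bounded in $\|\cdot\|_p$ (Krein--\v{S}mulian gives no such reduction for points of the closure), and truncating the $y_\alpha$ by their own spectral projections both ejects them from $\mathcal K$ (those projections are not in $H^\infty$) and destroys the convergence to $y$. The paper sidesteps the need for any uniform bound by cutting the dual vector rather than the net: having chosen, via Lemma \ref{lemma2.13}, a projection $e\in\mathcal D$ with $\tau(e)<\infty$ and $\tau(ex\xi)\ne0$, one has $\xi e\in L^1(\mathcal M,\tau)=\mathcal M_\sharp$, so the weak$^*$ convergence $y_i\to x$ pairs directly against $\xi e$ to give $\tau(x\xi e)=\lim_i\tau(ey_i\xi)=0$ (using $ey_i\in\mathcal K\cap\mathcal M$ since $e\in\mathcal D\subseteq H^\infty$), a contradiction. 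Your $L^1\cap L^{p'}$-approximation of $\phi$ is the right instinct but it only works in tandem with a bound you cannot supply; the finite-trace projection $e$ is what replaces it.
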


}

 {\renewcommand{\thetheorem}{\ref{proposition4.2}}
 \begin{proposition}
Let $\mathcal M$ be a von Neumann algebra with a faithful, normal,
semifinite tracial weight $\tau$, and $H^\infty$  be  a semifinite
subdigonal subalgebra   of $\mathcal M$.

Assume that  $\mathcal K \subseteq  \mathcal{M} $ is weak$^*$-closed
subspace such that  $H^\infty \mathcal K \subseteq \mathcal K$. Then
$$  \mathcal K=\overline{[  \mathcal K \cap L^p(\mathcal{M},\tau)]_p \cap
\mathcal{M}}^{w^*}, \quad \forall \ 1\le p<\infty.$$
     \end{proposition}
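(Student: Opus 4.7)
\emph{Plan.} I prove the two inclusions separately. For the forward inclusion $\mathcal K \subseteq \overline{[\mathcal K \cap L^p(\mathcal M,\tau)]_p \cap \mathcal M}^{w^*}$, it suffices to show that $\mathcal K \cap L^p(\mathcal M,\tau)$ is weak$^*$-dense in $\mathcal K$, since trivially $\mathcal K \cap L^p(\mathcal M,\tau) \subseteq [\mathcal K \cap L^p(\mathcal M,\tau)]_p \cap \mathcal M$. Given $x \in \mathcal K$, I invoke the semifiniteness of $\tau$ restricted to $\mathcal D$ to produce an increasing net of projections $\{e_\lambda\} \subseteq \mathcal D$ with $\tau(e_\lambda) < \infty$ and $e_\lambda \uparrow I$ strongly. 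Since $e_\lambda \in \mathcal D \subseteq H^\infty$ and $H^\infty \mathcal K \subseteq \mathcal K$, each $e_\lambda x \in \mathcal K$; moreover $\|e_\lambda x\|_p^p \le \|x\|_\infty^p \tau(e_\lambda) < \infty$, so $e_\lambda x \in \mathcal K \cap L^p(\mathcal M,\tau)$. The net is bounded in operator norm by $\|x\|_\infty$ and converges to $x$ strongly, hence weak$^*$.

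\emph{Reverse inclusion.} Set $\mathcal N := [\mathcal K \cap L^p(\mathcal M,\tau)]_p$, a closed $H^\infty$-invariant subspace of $L^p(\mathcal M,\tau)$, so Proposition~\ref{proposition4.1} applies to $\mathcal N$. Since $\mathcal K$ is weak$^*$-closed, the reverse inclusion reduces to the pointwise claim $\mathcal N \cap \mathcal M \subseteq \mathcal K$. Fix $y \in \mathcal N \cap \mathcal M$, choose $y_n \in \mathcal K \cap L^p(\mathcal M,\tau)$ with $\|y_n - y\|_p \to 0$, and approximate $y$ weak$^*$ by $\{e_\lambda y\}$; this reduces the task to showing $e_\lambda y \in \mathcal K$ for every $\lambda$. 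Writing $\mathcal K = {}^\perp(\mathcal K_\perp)$ with $\mathcal K_\perp \subseteq L^1(\mathcal M,\tau)$ the preannihilator, this is equivalent to $\tau(e_\lambda y \cdot a) = 0$ for all $a \in \mathcal K_\perp$. The left $H^\infty$-invariance of $\mathcal K$ dualises to right $H^\infty$-invariance of $\mathcal K_\perp$, yielding $a e_\lambda \in \mathcal K_\perp$ and hence $\tau(y_n \cdot a e_\lambda) = 0$ for every $n$; by traciality $\tau(e_\lambda y \cdot a) = \tau((y - y_n) \cdot a e_\lambda)$.

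\emph{Main obstacle.} The crux is arranging $\tau((y - y_n) \cdot a e_\lambda) \to 0$ as $n \to \infty$. The direct H\"older estimate $\|y - y_n\|_p \|a e_\lambda\|_{p'}$ is unavailable since $a e_\lambda$ need not lie in $L^{p'}(\mathcal M,\tau)$ for arbitrary $a \in L^1(\mathcal M,\tau)$, and the sequence $\{y_n\}$ is not operator-norm bounded, so the bound $\|y_n\|_\infty$ cannot enter any estimate. My intended route is to approximate $a$ in $\|\cdot\|_1$ by elements $a^{(k)} \in \mathcal K_\perp \cap L^{p'}(\mathcal M,\tau)$; granted such density, the identities $\tau(y_n a^{(k)} e_\lambda) = 0$ (from $a^{(k)} e_\lambda \in \mathcal K_\perp$) together with H\"older give $\tau(y a^{(k)} e_\lambda) = \lim_n \tau(y_n a^{(k)} e_\lambda) = 0$, and then $\|a - a^{(k)}\|_1 \to 0$ combined with the boundedness $y \in \mathcal M$ yields $\tau(e_\lambda y a) = \lim_k \tau(y a^{(k)} e_\lambda) = 0$. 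The technical heart of the argument is thus proving that $\mathcal K_\perp \cap L^{p'}(\mathcal M,\tau)$ is $L^1$-dense in $\mathcal K_\perp$; this should be accessible by spectrally truncating the polar decomposition $a = u|a|$ into $a^{(N)} := u|a|\chi_{[0,N]}(|a|) \in L^1 \cap \mathcal M \subseteq L^{p'}$ and then correcting $a^{(N)}$ via its right $H^\infty$-orbit to land back in $\mathcal K_\perp$, but carrying out this correction while preserving both memberships is the delicate point.
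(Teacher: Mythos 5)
Your forward inclusion is exactly the paper's argument, and your dual reformulation of the reverse inclusion (reduce to $[\mathcal K\cap L^p]_p\cap\mathcal M\subseteq\mathcal K$, pass to the preannihilator $\mathcal K_\perp\subseteq L^1(\mathcal M,\tau)$, use its right $H^\infty$-invariance and traciality to reduce everything to $\tau((y-y_n)\,a e_\lambda)\to 0$) is a faithful mirror of the paper's Hahn--Banach setup. But the step you flag as the ``technical heart'' is a genuine gap, and the route you sketch for closing it does not work. You propose to approximate $a\in\mathcal K_\perp$ in $\|\cdot\|_1$ by $a^{(N)}=u|a|\chi_{[0,N]}(|a|)=a\,\chi_{[0,N]}(|a|)$. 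Since $\mathcal K_\perp$ is only \emph{right $H^\infty$-invariant}, keeping $a^{(N)}$ in $\mathcal K_\perp$ would require the right factor $\chi_{[0,N]}(|a|)$ to lie in $H^\infty$, and a spectral projection of $|a|$ has no reason to lie in $H^\infty$ (nor even to be approximable from $H^\infty$ in a way that preserves annihilation of $\mathcal K$). There is no evident ``correction'' restoring membership in $\mathcal K_\perp$ after truncation, so the density of $\mathcal K_\perp\cap L^{p'}$ in $\mathcal K_\perp$ remains unproved, and with it the whole reverse inclusion.

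The paper closes exactly this gap with Lemma \ref{lemma2.12} (a Riesz--Szeg\H{o}-type factorization from \cite{BX}): given the annihilating $\xi\in L^1(\mathcal M,\tau)$ and a projection $e\in\mathcal D$ with $\tau(e)<\infty$, one finds $h_3\in eH^\infty e$ and $h_4\in eH^1e$ with $h_3h_4=e$ and $\xi e h_3\in\mathcal M$. The crucial feature is that the bounding factor $h_3$ (and its $H^\infty$-approximants $k_n$ of $h_4$) sits in $H^\infty$, so $z=\xi e h_3 k_N$ is a \emph{bounded} element with $z=ze$ that still annihilates $\mathcal K$: $\tau(yz)=\tau((eh_3k_N)y\,\xi)=0$ because $(eh_3k_N)y\in\mathcal K$. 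Then $\tau((x-x_n)z)=\tau((x-x_n)ze)$ is controlled by H\"older as $\|x-x_n\|_p\|z\|\,\|e\|_q$. Note also that the paper never needs the full $\|\cdot\|_1$-density statement you set up; it only needs, for the one separating functional produced by Hahn--Banach, a single bounded replacement $z$ that still separates --- a strictly weaker and more tractable demand. If you import Lemma \ref{lemma2.12} at your ``main obstacle,'' your argument goes through; without it, the proof is incomplete.
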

}


{\renewcommand{\thetheorem}{\ref{prop4.4}}
 \begin{proposition}
Let $\mathcal M$ be a von Neumann algebra with a faithful, normal,
semifinite tracial weight $\tau$, and $H^\infty$  be  a semifinite
subdigonal subalgebra   of $\mathcal M$.  Assume that  $  S
\subseteq \mathcal{M} $ is a subspace such that  $H^\infty  S
\subseteq  S$.   Then
          $$[S\cap L^p(\mathcal{M},\tau)]_p = [\overline{S}^{w^*} \cap L^p(\mathcal{M},\tau)]_p, \quad \forall \ 1\le p<\infty.$$

     \end{proposition}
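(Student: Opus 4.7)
The inclusion $[S\cap L^p(\mathcal M,\tau)]_p\subseteq [\overline{S}^{w^*}\cap L^p(\mathcal M,\tau)]_p$ is immediate from $S\subseteq \overline{S}^{w^*}$. For the reverse inclusion the plan is to set $\mathcal K:=[S\cap L^p(\mathcal M,\tau)]_p$ and to apply Proposition \ref{proposition4.1} to $\mathcal K$. The hypotheses hold: $\mathcal K$ is closed in $L^p(\mathcal M,\tau)$ by construction, and since left multiplication by any $h\in H^\infty$ is a bounded operator on $L^p(\mathcal M,\tau)$ satisfying $h\cdot(S\cap L^p(\mathcal M,\tau))\subseteq S\cap L^p(\mathcal M,\tau)$, passing to the $L^p$-closure gives $H^\infty\mathcal K\subseteq \mathcal K$.

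The key auxiliary claim needed is that $\overline{S}^{w^*}\subseteq \overline{S\cap L^p(\mathcal M,\tau)}^{w^*}$. To prove it, use that $\tau|_{\mathcal D}$ is semifinite (part of the semifinite subdiagonal hypothesis) to fix an increasing net of projections $\{e_\alpha\}\subseteq \mathcal D$ with $\tau(e_\alpha)<\infty$ and $e_\alpha\uparrow I$ strongly. For any $y\in S$, the containments $\mathcal D\subseteq H^\infty$ and $H^\infty S\subseteq S$ give $e_\alpha y\in S$. The operator inequality $e_\alpha yy^*e_\alpha\le \|y\|^2 e_\alpha$ combined with $\tau(|e_\alpha y|^p)=\tau((e_\alpha yy^*e_\alpha)^{p/2})$ (valid because $a^*a$ and $aa^*$ have identical nonzero spectra) then yields $\|e_\alpha y\|_p\le \|y\|\,\tau(e_\alpha)^{1/p}<\infty$, so $e_\alpha y\in S\cap L^p(\mathcal M,\tau)$. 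Since $e_\alpha y\to y$ strongly with the net uniformly bounded, $e_\alpha y\to y$ in the weak$^*$-topology, placing $y$ in $\overline{S\cap L^p(\mathcal M,\tau)}^{w^*}$.

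Assembling the pieces: given $x\in \overline{S}^{w^*}\cap L^p(\mathcal M,\tau)$, the auxiliary claim puts $x\in \overline{S\cap L^p(\mathcal M,\tau)}^{w^*}\cap L^p(\mathcal M,\tau)$, and because $S\cap L^p(\mathcal M,\tau)\subseteq \mathcal K\cap \mathcal M$ this upgrades to $x\in \overline{\mathcal K\cap \mathcal M}^{w^*}\cap L^p(\mathcal M,\tau)$, which equals $\mathcal K\cap \mathcal M\subseteq \mathcal K$ by Proposition \ref{proposition4.1}(i). Since $\mathcal K$ is already $L^p$-closed, taking the closure of $\overline{S}^{w^*}\cap L^p(\mathcal M,\tau)$ in the $p$-norm yields the desired inclusion. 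The main obstacle is the weak$^*$-density claim, which is precisely where semifiniteness enters: without a net of finite-trace projections in $\mathcal D$ whose left multiplication preserves $S$, one cannot produce the $L^p$-elements required to weak$^*$-approximate a general $y\in S$, and this is exactly the issue absent from the finite setting treated by Blecher and Labuschagne.
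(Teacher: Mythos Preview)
Your proof is correct and follows essentially the same approach as the paper's own argument: both hinge on Proposition~\ref{proposition4.1}(i) applied to $\mathcal K=[S\cap L^p(\mathcal M,\tau)]_p$, and both use the net of finite-trace projections $e_\alpha\in\mathcal D$ (Lemma~\ref{lemma2.13}) to push elements of $S$ into $S\cap L^p(\mathcal M,\tau)$. The only difference is organizational: the paper fixes $x\in\overline{S}^{w^*}\cap L^p$, shows each $e_\lambda x$ lies in $\overline{\mathcal K\cap\mathcal M}^{w^*}\cap L^p=\mathcal K\cap\mathcal M$ (by approximating $x$ weak$^*$ by $x_j\in S$ and noting $e_\lambda x_j\in S\cap L^p$), and then uses $\|e_\lambda x-x\|_p\to0$ to conclude; you instead isolate the weak$^*$-density claim $S\subseteq\overline{S\cap L^p}^{w^*}$ first and then apply Proposition~\ref{proposition4.1}(i) directly to $x$, which lets you skip the final $L^p$-approximation step. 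This is a slightly cleaner packaging of the same argument.
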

}

Subsequently, we are able to prove a noncommutative
Beurling-Blecher-Labuschagne theorem for the semifinite case when
$0<p<1$.

{\renewcommand{\thetheorem}{\ref{theorem5.4}}
   \begin{theorem}   Let $0<p<1$.
Let $\mathcal M$ be a von Neumann algebra with a faithful, normal,
semifinite tracial weight $\tau$, and $H^\infty$  be  a semifinite
subdigonal subalgebra   of $\mathcal M$ (see Definition
\ref{Definition2.7}).  Let $\mathcal D=H^\infty\cap (H^\infty)^*$.
Assume that $\mathcal K  $ is a closed subspace of $L^p(\mathcal
M,\tau)$ such that $H^\infty \mathcal K \subseteq \mathcal K$.

             Then there exist   a   closed subspace $Y$ of $L^p(\mathcal M,\tau)$ and a family $\{u_\lambda\}_{\lambda \in\Lambda}$ of partial isometries in
             $ \mathcal{M}$  such that:
            \begin{enumerate}
              \item  [(i)] $u_\lambda Y^*=0$ for all ${\lambda\in\Lambda}$.
\item  [(ii)] $u_\lambda u_\lambda^*\in \mathcal D$ and $u_\lambda u_\mu^*=0$
for all $\lambda, \mu\in \Lambda$ with $\lambda\ne \mu$.
\item  [(iii)] $Y= [H^\infty_0Y]_p$.
                \item [(iv)] $\mathcal K=Y \oplus^{row} (\oplus^{row}_{\lambda\in\Lambda } H^p u_\lambda)$
            \end{enumerate} Here $\oplus^{row}$ is the row sum of
            subspaces
            defined in Definition \ref{definition2.15}.
        \end{theorem}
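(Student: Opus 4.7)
The plan is to reduce Theorem \ref{theorem5.4} to Theorem \ref{theorem4.6} (the case $1\le p<\infty$, which I may assume) via a density argument that transports the $L^1$-decomposition down to the quasi-Banach range $0<p<1$. The main technical content is an extension of the density results in Propositions \ref{proposition4.1}--\ref{prop4.4} to this range.

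First I would prove the analog of Proposition \ref{proposition4.1}(ii) for $0<p<1$: namely that $\mathcal{K}=[\mathcal{K}\cap L^1(\mathcal{M},\tau)\cap L^p(\mathcal{M},\tau)]_p$. Since $H^\infty$ is semifinite subdiagonal, $\mathcal{D}$ is semifinite and admits an increasing net of projections $\{e_\alpha\}\subset\mathcal{D}$ with $\tau(e_\alpha)<\infty$ and $e_\alpha\uparrow I$ strongly. For $x\in\mathcal{K}$, the elements $e_\alpha x$ lie in $\mathcal{K}$ by left $H^\infty$-invariance, and $e_\alpha x\to x$ in $\|\cdot\|_p$: this follows from the operator inequality $x^{*}(I-e_\alpha)x\le |x|^2$, operator monotonicity of $t\mapsto t^{p/2}$ on $[0,\infty)$ (valid for $p/2\in(0,1]$), and normality of the trace. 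A further right-truncation of each $e_\alpha x$ by spectral projections of $|e_\alpha x|$ produces elements lying in $L^1\cap\mathcal{M}$; closedness of $\mathcal{K}$ in $L^p$ together with an iterated limit argument then yields the required approximation from inside $\mathcal{K}\cap L^1\cap L^p$.

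Once the density is in hand, set $\mathcal{K}_1=[\mathcal{K}\cap L^1\cap L^p]_1$, which is a closed left $H^\infty$-invariant subspace of $L^1(\mathcal{M},\tau)$. Applying Theorem \ref{theorem4.6} at $p=1$ to $\mathcal{K}_1$ produces partial isometries $\{u_\lambda\}_{\lambda\in\Lambda}\subseteq\mathcal{M}$ and a closed $Y_1\subseteq L^1(\mathcal{M},\tau)$ with $Y_1=[H^\infty_0 Y_1]_1$, together with the row-sum decomposition $\mathcal{K}_1=Y_1\oplus^{row}(\oplus^{row}_{\lambda\in\Lambda}H^1u_\lambda)$ and the orthogonality conditions (i)--(ii) of the theorem. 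Setting $Y=[Y_1\cap L^p(\mathcal{M},\tau)]_p$, the same $\{u_\lambda\}$ and this $Y$ give the decomposition of $\mathcal{K}$ claimed in the statement: the algebraic conditions (i)--(ii) transfer immediately because the $u_\lambda$ already sit in $\mathcal{M}$; the row-sum decomposition (iv) follows by intersecting the $L^1$-decomposition with $L^p$ and appealing to the density of the first step to recover all of $\mathcal{K}$; and condition (iii) $Y=[H^\infty_0 Y]_p$ follows from $Y_1=[H^\infty_0 Y_1]_1$ by the same density arguments.

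The main obstacle is the density step. In the finite-trace setting of Blecher--Labuschagne it is essentially automatic because $\mathcal{M}\subseteq L^p$, but in the semifinite case with $0<p<1$ neither right-truncation (which destroys left $H^\infty$-invariance of $\mathcal{K}$) nor left-truncation by $\mathcal{D}$-projections alone (which stays inside $\mathcal{K}$ but does not by itself place us in $L^1$) is sufficient. The two operations have to be interleaved, and all the quasi-norm estimates must be done by hand since $L^p$ fails to be locally convex for $p<1$ and standard Banach-space duality is unavailable. Once this density is settled, the rest of the argument is essentially bookkeeping around the orthogonality relations for the partial isometries.
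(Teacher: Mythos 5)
Your overall strategy coincides with the paper's: truncate down to a closed $H^\infty$-invariant subspace of an $L^r$ with $r\ge 1$ (you choose $r=1$, the paper chooses $r=2$), apply Theorem \ref{theorem4.6} there, set $Y=[Y_1\cap L^p(\mathcal M,\tau)]_p$, and transport the decomposition back via density results analogous to Propositions \ref{proposition5.1} and \ref{proposition5.2}. The choice of intermediate exponent is immaterial. However, your proof of the crucial density statement $\mathcal K=[\mathcal K\cap L^1\cap L^p]_p$ has a genuine gap. Right-truncation of $e_\alpha x$ by spectral projections of $|e_\alpha x|$ does produce elements of $\mathcal M\cap L^1$ converging to $e_\alpha x$ in $\|\cdot\|_p$, but those elements leave $\mathcal K$, since $\mathcal K$ is only \emph{left} $H^\infty$-invariant. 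You acknowledge exactly this obstruction and then assert that the two truncations ``have to be interleaved,'' but no interleaving of a left cut (which preserves $\mathcal K$ but not boundedness) with a right cut (which gives boundedness but destroys membership in $\mathcal K$) can produce approximants that enjoy both properties simultaneously: whichever operation is performed last reintroduces its defect. This is not a bookkeeping issue; it is the central difficulty of the semifinite $0<p<1$ case.

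The ingredient that resolves it, and which your proposal neither supplies nor replaces, is the noncommutative Riesz--Szeg\H{o} outer factorization (Lemma \ref{lemma2.12} of the paper, resting on Theorem 3.1 of \cite{BX}): writing $ex=|x^*e|u_1$ and factoring $(e+|x^*e|)^{-1}=vh_1$ with $v$ unitary in $e\mathcal Me$, $h_1\in eH^\infty e$, and $h_2=h_1^{-1}\in eH^pe$, one gets $h_1ex\in\mathcal M$ with $h_1ex\in\mathcal K$ (because $h_1$ multiplies on the \emph{left} and lies in $H^\infty$), and one recovers $ex=h_2h_1ex$ as a $\|\cdot\|_p$-limit of $k_nh_1ex\in\mathcal K\cap L^2$ by approximating $h_2$ with bounded $k_n\in eH^\infty e$. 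Without this (or an equivalent substitute), your density step, and hence the reduction to Theorem \ref{theorem4.6}, does not go through; the subsequent transfers of conditions (iii) and (iv) also rely on the same factorization through the analogues of Proposition \ref{proposition5.2} and Lemma \ref{lemma5.3.1}.
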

Here, we use similar methods to our proof for $1\leq
p<\infty$, including proving a similar density theorem (see
Proposition \ref{proposition5.1}, Proposition \ref{proposition5.2}).

}






Using theorems \ref{theorem4.6} and \ref{theorem5.4} and corollary \ref{corollary5.5}, we are then able to prove a Beurling-Blecher-Labuschagne-like theorem for the crossed product of a semifinite von Neumann algebra $\mathcal{M}$ by a trace-preserving action $\alpha$ when $0<p<\infty$.    We are actually able to fully characterize the $H^\infty$-invariant subspace of the crossed product.

{\renewcommand{\thetheorem}{\ref{theorem6.2}}
\begin{theorem}
 Let $\mathcal M$ be a von Neumann algebra with a semifinite, faithful, normal, tracial weight $\tau$, and  $\alpha$ be a  trace-preserving $*$-automorphism  of $\mathcal M$. Denote by  $ \mathcal M\rtimes_\alpha \mathbb Z$ the crossed product of $\mathcal M$ by an action $\alpha$, and still denote by $\tau$ a  semifinite, faithful, normal, extended tracial weight on  $\mathcal M\rtimes_\alpha \mathbb Z$.

Let     $H^\infty $,   a weak $*$-closed nonself-adjoint subalgebra generated by
$
\{\Lambda (n)\Psi(x)  : x\in \mathcal M, \ n\ge 0\}
$  in  $\mathcal M\rtimes_\alpha \mathbb Z$, be a semifinite  subdiagonal subalgebra of  $\mathcal M\rtimes_\alpha \mathbb Z$.
 Then the following statements are true.

\begin{enumerate}
\item [(i)] Let $0<p<\infty$.  Assume that $\mathcal K  $ is a closed
subspace of $L^p(\mathcal M\rtimes_\alpha \mathbb Z,\tau)$ such that $H^\infty \mathcal K
\subseteq \mathcal K$.
             Then there exist a projection $q$ in $\mathcal M$  and    a    family $\{u_\lambda\}_{\lambda \in\Lambda}$ of partial isometries in
             $ \mathcal M\rtimes_\alpha \mathbb Z$  satisfying
            \begin{enumerate}
              \item [(a)]  $u_\lambda q=0$  for all $\lambda \in \Lambda$;
              \item  [(b)]  $u_\lambda u_\lambda^*\in \mathcal M$ and $u_\lambda u_\mu^*=0$
for all $\lambda, \mu\in \Lambda$ with $\lambda\ne \mu$;
                \item [(c)] $\mathcal K=(L^p( \mathcal M\rtimes_\alpha \mathbb Z,\tau)q) \oplus^{row}(  \oplus^{row}_{\lambda\in\Lambda } H^p u_\lambda).$
            \end{enumerate}
\item [(ii)]   Assume that $\mathcal K  $ is a weak $*$-closed
subspace of $ \mathcal M\rtimes_\alpha \mathbb Z $ such that $H^\infty \mathcal K
\subseteq \mathcal K$.
             Then there  exist a projection $q$ in $\mathcal M$  and   a    family $\{u_\lambda\}_{\lambda \in\Lambda}$ of partial isometries in
             $ \mathcal M\rtimes_\alpha \mathbb Z$  satisfying
            \begin{enumerate}
             \item [(a)]  $u_\lambda q=0$  for all $\lambda \in \Lambda$;
              \item  [(b)]  $u_\lambda u_\lambda^*\in \mathcal M$ and $u_\lambda u_\mu^*=0$
for all $\lambda, \mu\in \Lambda$ with $\lambda\ne \mu$;
                \item [(c)] $\mathcal K= ( ( \mathcal M\rtimes_\alpha \mathbb Z)q )\oplus^{row}( \oplus^{row}_{\lambda\in\Lambda } H^\infty u_\lambda).$
            \end{enumerate}
\end{enumerate}

\end{theorem}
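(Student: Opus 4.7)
The plan is to derive Theorem \ref{theorem6.2}(i) from Theorem \ref{theorem4.6} (when $1\le p<\infty$) or Theorem \ref{theorem5.4} (when $0<p<1$) by exploiting the $\mathbb Z$-grading of the crossed product, and then derive (ii) from (i) via the density/approximation results of Section 4.

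For (i), rather than applying Theorem \ref{theorem4.6} directly to $\mathcal K$ (which would yield a type-$2$ summand $L^p q'$ with $q'$ a priori only in $\mathcal M\rtimes_\alpha\mathbb Z$), I would first choose $q\in\mathcal M$ intrinsically, as the supremum of all projections $p\in\mathcal M$ satisfying $L^p(\mathcal M\rtimes_\alpha\mathbb Z,\tau)p\subseteq\mathcal K$. Let $\mathcal K'$ denote the row-complement of $L^p q$ in $\mathcal K$; it is again a closed $H^\infty$-invariant subspace, and by the maximality of $q$ it contains no $L^p p$ for nonzero $p\in\mathcal M$. The task then is to show $\mathcal K'=\oplus^{row}_\lambda H^p u_\lambda$ with $u_\lambda$ satisfying (b). Applying Theorem \ref{theorem4.6}/\ref{theorem5.4} to $\mathcal K'$ gives $\mathcal K'=Y'\oplus^{row}\bigl(\oplus^{row}_\lambda H^p u_\lambda\bigr)$ with $u_\lambda u_\lambda^*\in\mathcal D=\Psi(\mathcal M)$, $u_\lambda u_\mu^*=0$ for $\lambda\ne\mu$, $u_\lambda (Y')^*=0$, and $Y'=[H^\infty_0 Y']_p$.

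The key structural observation is that $H^\infty_0$ coincides with $\Lambda(1)H^\infty$ in the weak$^*$ topology, since $H^\infty$ is generated by $\Psi(\mathcal M)$ and $\{\Lambda(n):n\ge 0\}$ and the conditional expectation $\Phi:\mathcal M\rtimes_\alpha\mathbb Z\to\mathcal M$ annihilates $\Lambda(n)\Psi(x)$ for $n\ge 1$. Consequently $Y'=[H^\infty_0 Y']_p=\Lambda(1)[H^\infty Y']_p=\Lambda(1)Y'$, and combined with $\Lambda(1)Y'\subseteq Y'$ from $H^\infty$-invariance this forces $\Lambda(n)Y'=Y'$ for every $n\in\mathbb Z$. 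Together with $\Psi(\mathcal M)Y'\subseteq Y'$, this shows $Y'$ is a closed left $\mathcal M\rtimes_\alpha\mathbb Z$-submodule of $L^p$, so $Y'=L^p q''$ for a unique projection $q''\in\mathcal M\rtimes_\alpha\mathbb Z$. The main obstacle is to derive $q''=0$, and hence that $\mathcal K'$ is purely of type $1$. Here I would use the canonical $\tau$-preserving conditional expectation $\Phi$ applied to the positive element $q''$: if $q''\ne 0$, then by faithfulness of $\Phi$ on positive elements $\Phi(q'')\ne 0$, and its support projection $s\in\mathcal M$ would give (after careful use of the Fourier expansion $q''=\sum_n\Lambda(n)\Phi(\Lambda(-n)q'')$, the identity $\Phi(s\,a\,s)=s\,\Phi(a)\,s$, and the faithfulness of $\tau\circ\Phi$) a nonzero projection $s\in\mathcal M$ with $L^p s\subseteq L^p q''\subseteq\mathcal K'$; replacing $q$ by $s\vee q$ would then contradict the maximality of $q$. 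The precise identification $s\le q''$ is the key technical step that uses the crossed-product structure in an essential way.

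For (ii), I would invoke Proposition \ref{proposition4.2} with any fixed $1\le p<\infty$ to write $\mathcal K=\overline{[\mathcal K\cap L^p(\mathcal M\rtimes_\alpha\mathbb Z,\tau)]_p\cap(\mathcal M\rtimes_\alpha\mathbb Z)}^{w^*}$, apply part (i) to the closed $H^\infty$-invariant subspace $[\mathcal K\cap L^p]_p$ of $L^p$ to obtain a projection $q\in\mathcal M$ and partial isometries $\{u_\lambda\}$, and then use Proposition \ref{proposition4.1} to identify the weak$^*$-closures of $L^p q$ and $H^p u_\lambda$ intrinsically with $(\mathcal M\rtimes_\alpha\mathbb Z)q$ and $H^\infty u_\lambda$ respectively, producing the desired weak$^*$-closed decomposition with the same $q$ and partial isometries satisfying (a) and (b).
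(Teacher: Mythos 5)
Your outline shares the paper's central observation: by Lemma \ref{lemma6.1}, $H^\infty_0=\Lambda(1)H^\infty$ with $\Lambda(1)$ unitary, so the type-$2$ summand $Y$ coming out of Theorem \ref{theorem4.6} (resp.\ Theorem \ref{theorem5.4}) satisfies $Y=[\Lambda(1)H^\infty Y]_p$, is therefore invariant under left multiplication by all of $\mathcal M\rtimes_\alpha\mathbb Z$, and so equals $L^p(\mathcal M\rtimes_\alpha\mathbb Z,\tau)q$ for a projection $q$ by Corollary \ref{corollary5.5}. The paper applies the structure theorem to $\mathcal K$ itself and stops there; you instead pre-extract a maximal projection $q\in\mathcal M$ with $L^p(\mathcal M\rtimes_\alpha\mathbb Z,\tau)q\subseteq\mathcal K$ and try to show the residual type-$2$ part $Y'=L^pq''$ of the complement vanishes. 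Your reduction of (ii) to (i) via Propositions \ref{proposition4.1} and \ref{proposition4.2} is in the spirit of how the paper passes between the $L^p$ and weak$^*$ settings and is fine. But the step you yourself flag as ``the key technical step'' is exactly where the argument breaks, and it cannot be repaired as stated.

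Concretely, you take $s\in\mathcal M$ to be the support projection of $\Phi(q'')$ and want $L^ps\subseteq L^pq''$, i.e.\ $s\le q''$. Faithfulness of $\Phi$ yields the \emph{opposite} inequality: since $s\in\mathcal M$, $\Phi((I-s)q''(I-s))=(I-s)\Phi(q'')(I-s)=0$, hence $(I-s)q''(I-s)=0$, hence $q''(I-s)=0$, i.e.\ $q''\le s$. So the support of the zeroth Fourier coefficient dominates $q''$ rather than being dominated by it, and no contradiction with the maximality of $q$ is obtained. The obstruction is not cosmetic: for $\mathcal M=\mathbb C$ with trivial action one has $\mathcal M\rtimes_\alpha\mathbb Z\cong L^\infty(\mathbb T)$ and $H^\infty=H^\infty(\mathbb T)$, and the doubly invariant subspace $\mathcal K=\chi_EL^2(\mathbb T)$ with $0<\mu(E)<1$ is of the form $L^2q''$ for a projection $q''=\chi_E$ that dominates no nonzero projection of $\mathcal M=\mathbb C I$; so there is no soft argument forcing the generating projection down into $\mathcal M$. (You should note that the paper's own proof is exposed at the very same point: Corollary \ref{corollary5.5} only produces a projection in the ambient algebra $\mathcal M\rtimes_\alpha\mathbb Z$, and the assertion that it lies in $\mathcal M$ is not justified there either.) As written, then, part (i) of your argument has a genuine gap at the claimed inequality $s\le q''$, and part (ii), which is built on (i), inherits it.
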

}

In \cite{MMS}, McAsey, Muhly and Saito prove a Beurling's theorem for a crossed product. \emph{Suppose $\mathcal{M}$ is a finite von Neumann algebra with finite trace $\tau$ and $\alpha$, a trace preserving automorphism of $\mathcal{M}$, such that $\alpha$ fixes each element of the center $Z(\mathcal{M})$ of $\mathcal{M}$.  Then let $\mathcal{A} = \mathcal{M}\rtimes_\alpha \mathbb{Z}_+$.   Then every $\mathcal{A}$ and $\mathcal{A}^*$-invariant subspace $\mathcal{K}$ of $L^2(M,\tau)$ has the form $\mathcal{K}=vH^2$ for a partial isometry $v$ in the commutant of right multiplication by $\mathcal{M}$ on $L^2(\mathcal{M},\tau)$.}   This follows from theorem \ref{theorem6.2} when $\tau$ is finite, and $p=2$.

McAsey, Muhly and Saito's result is a corollary of a result by Nazaki and Watatani in \cite{NW}.  \emph{Suppose $\mathcal{M}$ is a finite von Neumann algebra with trace $\tau$, $\mathcal{D}\subseteq\mathcal{M}$ and a faithful, normal, trace-preserving conditional expectation $\Phi:\mathcal{M}\rightarrow \mathcal{D}$.  We let $H^\infty$ be a maximal subdiagonal algebra with respect to $\Phi$, and suppose that $Z(\mathcal{D})\subseteq Z(\mathcal{M})$.  Then, if we let $\mathcal{K}$ be a $H^\infty$-invariant subspace of $L^2(\mathcal{M},\tau)$ such that $\mathcal{K}$ is of $H^\infty$-type I (in the sense defined in \cite{NW}), there exists a partial isometry $v$ in the commutant of right multiplcation by $\mathcal{M}$ such that $\mathcal{K}=vH^2$.}  Again, this follows from our result in the finite case when $p=2$.

Similarly, Saito in \cite{Sai2} proves another Beurling-like theorem
for a finite von Neumann algebra $\mathcal{M}$. \emph{Let a closed
$\mathcal{K}$ of $L^2(\mathcal{M},\tau)$ invariant under
$\mathcal{M}\rtimes_\alpha \mathbb{Z}_+$ such that there are no
subspaces of $\mathcal{K}$ with $\mathcal{M}\rtimes_\alpha
\mathbb{Z} \mathcal{K}\subseteq \mathcal{K}$ such that $\mathcal{K}$
has the form $\sum_{n=0}^{\infty} \oplus V_nH^2$ with $\{V_n\}$ a
family of partial isometries with $\{V_n v_n^*\}$ is mutually
orthonogal. }

We are also able to prove a Beurling-Blecher-Labuschagne-like theorem for the Schatten $p$-classes for $0<p<\infty$, as described at the beginning of this section, using theorems \ref{theorem4.6}, \ref{theorem5.4} and corollary \ref{corollary5.5}.

{\renewcommand{\thetheorem}{\ref{cor6.3}}
\begin{corollary}
Let $\mathcal H$ be a separable Hilbert space with  an orthonormal base $\{e_m\}_{m\in\mathbb Z}$.
 Let $H^\infty$ be the lower triangular subalgebra of $B(\mathcal H)$, i.e.
$$
H^\infty =\{ x\in B(\mathcal H) : \langle xe_m, e_n\rangle =0, \ \ \forall n<m\}.
$$
 Let $\mathcal D=H^\infty\cap (H^\infty)^*$ be the diagonal subalgebra of $B(\mathcal H)$.
\begin{enumerate}\item [(i)]  For each $0<p< \infty$, let $S^p(\mathcal H)$  be the Schatten $p$-class.  Assume that $\mathcal K  $ is a closed
subspace of $S^p(\mathcal H)$ such that $H^\infty \mathcal K
\subseteq \mathcal K$.
             Then there exist a   projection $q$ in $\mathcal D$  and    a    family $\{u_\lambda\}_{\lambda \in\Lambda}$ of partial isometries in
             $ B(\mathcal H)$  satisfying
            \begin{enumerate}
              \item [(a)]  $u_\lambda q=0$  for all $\lambda \in \Lambda$;
              \item  [(b)]  $u_\lambda u_\lambda^*\in  \mathcal D  $   and $u_\lambda u_\mu^*=0$
for all $\lambda, \mu\in \Lambda$ with $\lambda\ne \mu$;
                \item [(c)] $\mathcal K=(S^p(\mathcal H)q) \oplus^{row}(  \oplus^{row}_{\lambda\in\Lambda } H^p u_\lambda).$
            \end{enumerate}
\item [(ii)]   Assume that $\mathcal K  $ is a weak $*$-closed
subspace of $ B(\mathcal H) $ such that $H^\infty \mathcal K
\subseteq \mathcal K$.
             Then there  exist a projection $q$ in $\mathcal D$  and   a    family $\{u_\lambda\}_{\lambda \in\Lambda}$ of partial isometries in
             $ B(\mathcal H)$  satisfying
            \begin{enumerate}
             \item [(a)]  $u_\lambda q=0$  for all $\lambda \in \Lambda$;
              \item  [(b)]  $u_\lambda u_\lambda^*\in \mathcal D$ and $u_\lambda u_\mu^*=0$
for all $\lambda, \mu\in \Lambda$ with $\lambda\ne \mu$;
                \item [(c)] $\mathcal K= ( B(\mathcal H)q )\oplus^{row}( \oplus^{row}_{\lambda\in\Lambda } H^\infty u_\lambda).$
            \end{enumerate}
\end{enumerate}

\end{corollary}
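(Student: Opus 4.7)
My plan is to deduce the corollary from Theorem \ref{theorem6.2} by realizing $B(\mathcal{H})$ as a crossed product of the diagonal subalgebra $\mathcal{D}$ by a single shift automorphism, under which the lower triangular subalgebra $H^\infty$ corresponds exactly to the non-self-adjoint crossed product appearing in that theorem.

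First I would set up the crossed-product identification. Let $U \in B(\mathcal{H})$ be the bilateral shift defined by $U e_m = e_{m+1}$, and set $\alpha(d) := U d U^*$, which is a trace-preserving $*$-automorphism of $\mathcal{D} \cong \ell^\infty(\mathbb{Z})$ (equipped with the counting-measure trace). A direct computation with the matrix units $E_{mn}$ of $B(\mathcal{H})$ gives $E_{mn} = P_m U^{m-n}$, where $P_m \in \mathcal{D}$ is the rank-one projection onto $\mathbb{C} e_m$. Consequently the von Neumann algebra generated by $\mathcal{D}$ and $U$ is all of $B(\mathcal{H})$, and the canonical map yields a trace-preserving $*$-isomorphism $B(\mathcal{H}) \cong \mathcal{D} \rtimes_\alpha \mathbb{Z}$; in particular $L^p(B(\mathcal{H}), \mathrm{Tr}) = S^p(\mathcal{H})$ for all $0 < p < \infty$.

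Next I would identify $H^\infty$ inside this picture. Using the matrix-unit formula, every $x \in B(\mathcal{H})$ admits a formal Fourier decomposition $x \sim \sum_{k \in \mathbb{Z}} d_k U^k$ with $d_k(m) = \langle x e_{m-k}, e_m \rangle$, and the lower triangularity condition $\langle x e_m, e_n \rangle = 0$ for $n < m$ translates precisely into $d_k = 0$ for $k < 0$. Hence $H^\infty$ coincides with the weak-$*$ closed non-self-adjoint subalgebra generated by $\{d U^n : d \in \mathcal{D}, n \geq 0\}$, and $\mathcal{D} = H^\infty \cap (H^\infty)^*$. To apply Theorem \ref{theorem6.2} I must verify that $H^\infty$ is a semifinite subdiagonal subalgebra in the sense of Definition \ref{Definition2.7}: the conditional expectation $\Phi$ extracts the $k = 0$ Fourier coefficient, and it is trace-preserving because the canonical trace on the crossed product is $\tau(\sum d_k U^k) = \tau(d_0)$; multiplicativity of $\Phi$ on $H^\infty$ follows from $d_i U^i \cdot e_j U^j = d_i \alpha^i(e_j) U^{i+j}$ with $i,j \geq 0$, whose zeroth coefficient is nonzero only when $i = j = 0$; and $H^\infty + (H^\infty)^*$ is weak-$*$ dense in $B(\mathcal{H})$ since it contains every finite linear combination of matrix units.

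With these identifications in place, both parts of the corollary are direct specializations of Theorem \ref{theorem6.2}: part (i) follows from Theorem \ref{theorem6.2}(i) together with $L^p(B(\mathcal{H}), \mathrm{Tr}) = S^p(\mathcal{H})$, and part (ii) follows from Theorem \ref{theorem6.2}(ii); the projection $q$ produced by the theorem lies in $\mathcal{M} = \mathcal{D}$, and the partial isometries $u_\lambda$ sit in $\mathcal{M} \rtimes_\alpha \mathbb{Z} = B(\mathcal{H})$ with $u_\lambda u_\lambda^* \in \mathcal{D}$, matching the stated conclusions exactly. The only step that requires any real care is the verification that $H^\infty$ is genuinely the non-self-adjoint crossed product appearing in Theorem \ref{theorem6.2} and that the three subdiagonal axioms hold; both reduce to the explicit matrix-coefficient computation described above, and I do not foresee any additional obstacle beyond careful bookkeeping.
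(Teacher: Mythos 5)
Your proposal is correct and follows essentially the same route as the paper: the paper likewise realizes $B(\mathcal H)$ as the crossed product $\ell^\infty(\mathbb Z)\rtimes_\alpha\mathbb Z$ of the diagonal by the shift automorphism (Example \ref{exam6.1}), identifies the lower triangular algebra with the analytic crossed product $\ell^\infty(\mathbb Z)\rtimes_\alpha\mathbb Z_+$ via Lemma \ref{lemma6.1}, and then invokes Theorem \ref{theorem6.2}. Your write-up merely makes explicit the matrix-unit computations and the verification of the subdiagonal axioms that the paper delegates to Lemma \ref{lemma6.1} and a citation to Example 2.6 of \cite{MMS}.
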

}

However, if we have that this projection $q$ in $\mathcal{D}$ has the characteristic that $S^p(\mathcal{H})q\subseteq H^p$, then we can prove corollary \ref{cor6.4}, and therefore fully characterize a $H^\infty$-invariant subspace $\mathcal{K}\subseteq H^p$ when $0<p\leq \infty$ and $\mathcal{H}$ is a separable Hilbert space with an orthonormal base.

{\renewcommand{\thetheorem}{\ref{cor6.4}}
\begin{corollary}
Let $\mathcal H$ be a separable Hilbert space with  an orthonormal base  $\{e_m\}_{m\in\mathbb Z}$.
 Let $H^\infty$ be the lower triangular subalgebra of $B(\mathcal H)$, i.e.
$$
H^\infty =\{ x\in B(\mathcal H) : \langle xe_m, e_n\rangle =0, \ \ \forall n<m\}.
$$ Let $\mathcal D=H^\infty\cap (H^\infty)^*$ be the diagonal subalgebra of $B(\mathcal H)$.
\begin{enumerate}\item [(i)]  For each $0<p< \infty$, if $\mathcal K  $ is a closed
subspace of $H^p$ such that $H^\infty \mathcal K
\subseteq \mathcal K$,
             then there exists   a    family $\{u_\lambda\}_{\lambda \in\Lambda}$ of partial isometries in
             $ H^\infty$  satisfying
            \begin{enumerate}
              \item  [(a)]  $u_\lambda u_\lambda^*\in  \mathcal D  $   and $u_\lambda u_\mu^*=0$
for all $\lambda, \mu\in \Lambda$ with $\lambda\ne \mu$;
                \item [(b)] $\mathcal K=  \oplus^{row}_{\lambda\in\Lambda } H^p u_\lambda .$
            \end{enumerate}
\item [(ii)]   Assume that $\mathcal K  $ is a weak $*$-closed
subspace of $ H^\infty $ such that $H^\infty \mathcal K
\subseteq \mathcal K$.
             Then there  exists     a    family $\{u_\lambda\}_{\lambda \in\Lambda}$ of partial isometries in
             $ H^\infty$  satisfying
            \begin{enumerate}
              \item  [(a)]  $u_\lambda u_\lambda^*\in \mathcal D$ and $u_\lambda u_\mu^*=0$
for all $\lambda, \mu\in \Lambda$ with $\lambda\ne \mu$;
                \item [(b)] $\mathcal K=  \oplus^{row}_{\lambda\in\Lambda } H^\infty u_\lambda .$
            \end{enumerate}
\end{enumerate}

\end{corollary}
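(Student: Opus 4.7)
My plan is to derive both parts as immediate consequences of Corollary~\ref{cor6.3}, so that the only real work is to extract two refinements forced by the extra hypothesis $\mathcal{K}\subseteq H^p$ (respectively $\mathcal{K}\subseteq H^\infty$): namely, that the diagonal projection $q$ appearing in~\ref{cor6.3} must vanish, and that each partial isometry $u_\lambda$ actually lies in $H^\infty$.

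First I would apply Corollary~\ref{cor6.3}(i), respectively (ii), to $\mathcal{K}$ viewed as a closed (resp.\ weak-$*$ closed) $H^\infty$-invariant subspace of the ambient space $S^p(\mathcal{H})$ (resp.\ $B(\mathcal{H})$). This produces a projection $q\in\mathcal{D}$ and a family of partial isometries $\{u_\lambda\}_{\lambda\in\Lambda}\subset B(\mathcal{H})$ satisfying the partial-isometry conditions of~\ref{cor6.3}, together with the decomposition
$$
\mathcal{K} \;=\; \bigl(S^p(\mathcal{H})q\bigr) \;\oplus^{row}\; \Bigl(\oplus^{row}_{\lambda\in\Lambda} H^p u_\lambda\Bigr)
$$
(and the $H^\infty$-analogue in case (ii)).

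Next I would argue that $q=0$. Suppose for contradiction that $\langle qe_n,e_n\rangle = 1$ for some $n\in\mathbb{Z}$. For any $m<n$, the rank-one matrix unit $e_{mn}\colon x\mapsto \langle x,e_n\rangle e_m$ lies in $S^p(\mathcal{H})\cap B(\mathcal{H})$, and $e_{mn}q=e_{mn}$. Hence $e_{mn}\in S^p(\mathcal{H})q\subseteq \mathcal{K}$ in case (i), and analogously $e_{mn}\in B(\mathcal{H})q\subseteq \mathcal{K}$ in case (ii). But $\mathcal{K}\subseteq H^p$ (resp.\ $H^\infty$) consists of operators whose $(i,j)$-entry vanishes whenever $i<j$, while $\langle e_{mn}e_n,e_m\rangle = 1$ with $m<n$. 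This contradiction forces $q=0$, so the ``type~2'' summand drops out.

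For the final step I would verify $u_\lambda\in H^\infty$. Writing $p_\lambda:=u_\lambda u_\lambda^*\in\mathcal{D}$, we have $u_\lambda=p_\lambda u_\lambda$. In case (ii), $p_\lambda\in\mathcal{D}\subseteq H^\infty$ at once gives $u_\lambda\in H^\infty u_\lambda\subseteq \mathcal{K}\subseteq H^\infty$. In case (i), $p_\lambda$ need not itself belong to $H^p$, but for each index $n$ in its support the rank-one projection $e_{nn}$ lies in $\mathcal{D}\cap S^p(\mathcal{H})\subseteq H^p$, so $e_{nn}u_\lambda\in H^p u_\lambda\subseteq \mathcal{K}\subseteq H^p$. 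This forces row $n$ of $u_\lambda$ to vanish strictly above the diagonal; since all other rows of $u_\lambda=p_\lambda u_\lambda$ are zero, $u_\lambda$ is lower triangular, i.e.\ in $H^\infty$. The main substance of the result lies inside Corollary~\ref{cor6.3}; the two verifications above are comparatively soft, exploiting only the rigidity of the lower-triangular condition and the presence of rank-one matrix units in every $S^p$.
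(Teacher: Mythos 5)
Your proposal is correct and follows essentially the same route as the paper, which likewise deduces the corollary directly from Corollary~\ref{cor6.3} together with the observation (stated there as a remark) that a projection $q\in\mathcal D$ with $S^p(\mathcal H)q\subseteq H^p$ must be zero. Your matrix-unit argument for $q=0$ and your verification that each $u_\lambda$ is lower triangular simply supply details the paper leaves implicit, and both check out.
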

}

Therefore, we are able to answer the question given in problem \ref{prob1.1} and fully characterize an $\mathcal{A}$-invariant subspace of a Schatten $p$-class: given a subpace $\mathcal{K}\subseteq S^p(\mathcal{H})$ such that $\mathcal{AK}\subseteq\mathcal{K}$, we have that $\mathcal{K}= (S^p(\mathcal{H})q) \oplus^{row}_{\lambda\in\Lambda } H^p u_\lambda$ when $0<p<\infty$, and $\mathcal{K}=(B(\mathcal{H})q)\oplus^{row}(\oplus^{row}_{\lambda\in\Lambda} H^\infty u_\lambda)$ when $p=\infty$.





The outline of the paper is as follows.   In section 2, we discuss
preliminary definitions and notations.  In section 3, we prove the
Beurling-Blecher-Labuschagne theorem for $L^p(\mathcal{M},\tau)$
when $p=2$, and extend this theorem in section 4 to the case when
$1\leq p\leq \infty$.  We provide several more preliminaries and
further extend the Beurling-Blecher-Labuschagne theorem to
$L^p(M,\tau)$ for $0<p<1$ in section 5.  Finally, in section 6, we
discuss some applications of our results on invariant subspaces for
analytic crossed products and discuss the results for the Schatten $p$-class.

\section{Preliminaries and Notation}
        In this section we give some preliminary definitions and results for non-commutative $L^p$ spaces for a von Neumann algebra with a tracial weight.  We then discuss Arveson's  non-commutative   Hardy space.

\subsection{Weak $*$-topology} Let $\mathcal M$ be a von Neumann
algebra with a predual $\mathcal M_\sharp$. Recall that the weak
$*$-topology   $\sigma(\mathcal M,\mathcal M_\sharp)$  on $\mathcal
M$ is a topology on $\mathcal M$ induced from the predual space
$\mathcal M_\sharp$. The following known result   (for example, see
Theorem 1.7.8 in \cite{Sakai}) is useful.

\begin{lemma}\label{Lemma2.1}
Let $\mathcal M$ be a von Neumann algebra. If
$\{e_\lambda\}_{\lambda\in\Lambda}$ is a net of projections in
$\mathcal M$ such that $e_\lambda\rightarrow I$ in weak
$*$-topology, then $e_\lambda x  \rightarrow x$, $ x
e_\lambda\rightarrow x$ and $e_\lambda x e_\lambda\rightarrow x$ in
weak $*$-topology for all $x$ in $\mathcal M$.
\end{lemma}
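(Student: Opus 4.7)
The plan is to verify the three weak$^*$ convergences by testing against an arbitrary normal linear functional $\phi\in\mathcal M_\sharp$, reducing to the case where $\phi$ is a normal state via the standard Jordan-type decomposition (any normal functional is a linear combination of four normal states, since the predual is spanned by normal states). So fix $x\in\mathcal M$ and a normal state $\phi$; I want to show $\phi(xe_\lambda)\to\phi(x)$, $\phi(e_\lambda x)\to\phi(x)$, and $\phi(e_\lambda x e_\lambda)\to\phi(x)$.

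The key input is that because each $e_\lambda$ is a projection, $(I-e_\lambda)^2=I-e_\lambda$, so the hypothesis $e_\lambda\to I$ in $\sigma(\mathcal M,\mathcal M_\sharp)$ gives not just $\phi(I-e_\lambda)\to 0$ but also $\phi((I-e_\lambda)^2)\to 0$. This is exactly the numerical quantity that controls the Cauchy--Schwarz estimates for a positive linear functional.

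The main tool I plan to use is the Kadison/Cauchy--Schwarz inequality for the positive functional $\phi$: $|\phi(b^*a)|^2\le\phi(a^*a)\,\phi(b^*b)$. Applying it with $a=(e_\lambda-I)$ and $b=x^*$ yields
\[
|\phi(x(e_\lambda-I))|^2\le \phi((I-e_\lambda)^2)\,\phi(xx^*)=\phi(I-e_\lambda)\,\phi(xx^*)\longrightarrow 0,
\]
which proves $xe_\lambda\to x$ weak$^*$. Symmetrically, taking $a=x$ and $b=e_\lambda-I$ gives $|\phi((e_\lambda-I)x)|^2\le\phi(x^*x)\,\phi(I-e_\lambda)\to 0$, so $e_\lambda x\to x$ weak$^*$.

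For the two-sided compression, I would just decompose $e_\lambda x e_\lambda-x=e_\lambda x(e_\lambda-I)+(e_\lambda-I)x$. The second summand is already handled; for the first, the same Cauchy--Schwarz estimate combined with the bound $\phi(e_\lambda xx^* e_\lambda)\le\phi(xx^*)$ (valid because $e_\lambda xx^*e_\lambda\le xx^*$ in $\mathcal M_+$, since $e_\lambda\le I$) gives $|\phi(e_\lambda x(e_\lambda-I))|^2\le\phi(xx^*)\,\phi(I-e_\lambda)\to 0$. No serious obstacle is expected; the only subtle point is remembering that projections satisfy $(I-e_\lambda)^2=I-e_\lambda$ so that weak$^*$ convergence of $e_\lambda$ to $I$ automatically controls the second moment, allowing Cauchy--Schwarz to turn weak$^*$ convergence into the stronger multiplicative statement.
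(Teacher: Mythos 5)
The paper never actually proves this lemma---it is quoted as a known fact with a pointer to Theorem~1.7.8 of \cite{Sakai}---so your argument is being judged on its own. Your overall strategy (reduce to normal states via the decomposition of the predual, apply Cauchy--Schwarz for a positive functional, and exploit $(I-e_\lambda)^2=I-e_\lambda$ so that weak$^*$ convergence of the projections controls the relevant second moment) is the standard and correct one, and your proofs of $xe_\lambda\to x$ and $e_\lambda x\to x$ are fine.

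There is, however, one false step in the two-sided case. You justify the bound on $\phi(e_\lambda xx^*e_\lambda)$ by asserting $e_\lambda xx^*e_\lambda\le xx^*$ ``since $e_\lambda\le I$''. Compression by a projection does not decrease a positive operator in the operator order: for $a=\left(\begin{smallmatrix}1&1\\1&1\end{smallmatrix}\right)\ge 0$ and $e=\left(\begin{smallmatrix}1&0\\0&0\end{smallmatrix}\right)$ one has $a-eae=\left(\begin{smallmatrix}0&1\\1&1\end{smallmatrix}\right)$, which has negative determinant, so $eae\not\le a$; and testing this example against the vector state at $(1,-1)/\sqrt2$ shows that even the scalar inequality $\phi(eae)\le\phi(a)$ can fail. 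The gap is harmless, because your Cauchy--Schwarz estimate only needs $\phi(e_\lambda xx^*e_\lambda)$ to be bounded uniformly in $\lambda$, and that follows from the correct operator inequality $e_\lambda xx^*e_\lambda\le \|x\|^2 e_\lambda\le \|x\|^2 I$, which gives $\phi(e_\lambda xx^*e_\lambda)\le\|x\|^2$ for any state $\phi$. With that one-line repair the argument is complete.
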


    \subsection{ Semifinite von Neumann Algebras}

    We begin with a description of a semifinite von Neumann algebra.

    Let $\mathcal{M}$ be a von Neumann algebra, and let $\mathcal{M}^+$ be the positive part of $\mathcal{M}$.  Recall that a mapping $\tau:\mathcal{M}^+\rightarrow [0,\infty]$ is a \textit{tracial weight} on $\mathcal{M}$ if
                \begin{enumerate}
                    \item $\tau(x+y)=\tau(x)+\tau(y)$ for $x,y \in \mathcal{M}^+$
                    \item $\tau(ax)=a\tau(x)$ for $x\in \mathcal{M}^+$ and $a\in[0,\infty]$
                    \item $\tau(xx^*)=\tau(x^*x)$ for every $x\in \mathcal{M}$.
                \end{enumerate}
    A tracial weight $\tau$ is called \textit{normal} if $\tau: \mathcal{M}^+\rightarrow \mathbb{C}$ is continuous with respect to the weak $*$-topology.
     $\tau$ is \textit{faithful} if for every $a\in \mathcal{M}^+$, $\tau(a^* a)=0$ implies $a=0$.  $\tau$ is said to be \textit{finite} if $\tau(I)<\infty$, and \textit{semifinite} if for any $x\in \mathcal{M}^+$, $x\neq 0$, there is a $y\in \mathcal{M}^+$, $y\neq 0$ such that $\tau(y)<\infty$ and $y\leq x$.  A von Neumann algebra $\mathcal{M}$ is called \textit{semifinite} if a faithful, normal semifinite $\tau$ exists.\\

The following lemma is well known.
\begin{lemma}\label{Lemma2.2}
Let $\mathcal M$ be a von Neumann algebra with a semifinite,
faithful, normal, tracial weight $\tau$. Then the following are
true.
\begin{enumerate}
\item There exists a family $\{e_j\}_{j\in J}$ of
orthogonal projections in $\mathcal M$ such that (i) $\sum_j e_j$
converges to $I$ in weak $*$-topology and (ii) $\tau(e_j)<\infty$
for each $ {j}\in J $.

\item There exists a net $\{e_\lambda\}_{\lambda\in\Lambda}$  of
projections in $\mathcal M$ such that (i) $e_\lambda\rightarrow I$
in weak $*$-topology and (ii) $\tau(e_\lambda)<\infty$ for each $
{\lambda}\in \Lambda$.

\end{enumerate}
\end{lemma}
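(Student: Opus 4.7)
\medskip

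My plan is to prove (1) by a standard Zorn's lemma argument producing a maximal family of pairwise orthogonal finite-trace projections, and then to deduce (2) from (1) by taking finite partial sums, directed by inclusion.

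For (1), consider the collection $\mathcal F$ of all families of mutually orthogonal projections in $\mathcal M$ each of which has finite trace, partially ordered by inclusion. This collection is nonempty because semifiniteness of $\tau$ guarantees the existence of at least one nonzero projection of finite trace (see below), and every chain in $\mathcal F$ has an upper bound (its union), so Zorn's lemma supplies a maximal family $\{e_j\}_{j\in J}$. Let $p=\bigvee_j e_j$; since the $e_j$'s are pairwise orthogonal, $p$ equals the sum $\sum_j e_j$ in the weak $*$-topology (indeed already in the strong operator topology). I claim $p=I$. If not, then $I-p$ is a nonzero element of $\mathcal M^+$, so by semifiniteness of $\tau$ there exists nonzero $y\in\mathcal M^+$ with $y\le I-p$ and $\tau(y)<\infty$. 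Choose $\varepsilon>0$ smaller than $\|y\|$ and let $e=\chi_{[\varepsilon,\|y\|]}(y)$, a nonzero spectral projection of $y$. Since $\varepsilon e\le y$, the trace bound $\varepsilon\tau(e)\le\tau(y)<\infty$ yields $\tau(e)<\infty$; since $e$ lies below the range projection of $y$, which in turn lies below $I-p$, the projection $e$ is orthogonal to every $e_j$. Adjoining $e$ to the family contradicts maximality, so $p=I$ as required. (The same spectral-projection trick, applied to an arbitrary nonzero $y\in\mathcal M^+$ of finite trace, verifies the initial nonemptiness of $\mathcal F$.)

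For (2), let $\Lambda$ be the directed set of all finite subsets of $J$, ordered by inclusion. For $\lambda=\{j_1,\dots,j_n\}\in\Lambda$, set
$$e_\lambda=e_{j_1}+\cdots+e_{j_n}.$$
Since the $e_{j_i}$'s are pairwise orthogonal projections, $e_\lambda$ is again a projection, and additivity of $\tau$ gives $\tau(e_\lambda)=\sum_{i=1}^n\tau(e_{j_i})<\infty$. The weak $*$-convergence $\sum_j e_j\to I$ from (1) is by definition the convergence of these finite partial sums, so $e_\lambda\to I$ in weak $*$-topology.

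The only mildly delicate point is the passage from a finite-trace positive element $y$ to a finite-trace \emph{projection} orthogonal to the previously chosen family; the spectral-projection truncation handles this cleanly, and everything else is a direct application of Zorn's lemma and the linearity of $\tau$ on $\mathcal M^+$. The result is standard, and the argument does not require any of the Hardy-space machinery developed later in the paper.
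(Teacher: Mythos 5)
Your argument is correct and follows essentially the same route as the paper: semifiniteness plus a spectral-projection truncation produces a nonzero finite-trace subprojection of any nonzero projection, Zorn's lemma then yields a maximal orthogonal family summing to $I$, and (2) is obtained from (1) by taking finite partial sums. Your write-up is somewhat more detailed than the paper's sketch, but there is no substantive difference in method.
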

\begin{proof} It is not hard to see that
(2) follows   from (1). For the purpose of completeness, we sketch
the  proof of (1) here. Actually, we need only to show that every
nonzero projection $e$ in $\mathcal M$ contains a nonzero
subprojection $\tilde e$ such that $ \tau(\tilde e)<\infty$.  Then
the rest follows directly from Zorn's lemma.

Let $e $ be a nonzero projection in $\mathcal M$. Since $\tau$ is
semifinite, then  is a $y\in \mathcal{M}^+$, $y\neq 0$ such that
$\tau(y)<\infty$ and $y\leq f$. Therefore, there exist a positive
number $\lambda>0$ and a nonzero spectral projection $\tilde e$ of
$y$ in $\mathcal M$ such that $\lambda \tilde e\le y$. Hence $\tilde
e$ is a non-zero subprojection of $e$ such that $ \tau (\tilde
e)<\infty$.
\end{proof}

\subsection{$L^p$-spaces of semifinite von Neumann algebras} Let
$\mathcal M$ be a von Neumann algebra with a semifinite, faithful,
normal, tracial weight $\tau$. We let
$$\mathcal{I}=span\{MeM:e=e^*=e^2 \in M \text{ with } \tau(e)<
\infty\}$$ be the set of elementary operators in $\mathcal M$ (see
Definition 3.1 in \cite{Se}). Then $\mathcal I$ is a two-sided ideal
of $\mathcal M$.

For each $0<p<\infty$, we define a mapping $\|\cdot\|_p: \mathcal
I\rightarrow [0,\infty)$ as follows
        $$\|x\|_p=(\tau(|x|)^p)^{\frac{1}{p}} \qquad \text{ for every } x\in\mathcal{I}.$$
     It is a highly trivial fact that $\|\cdot\|_p$ is a norm  on $\mathcal I$
     for $1\le p<\infty$, and a $p$-norm on $\mathcal I$   for $0< p< 1$. (see Theorem 4.9 in \cite{Fa}) \\ 

\begin{definition}\label{Definition2.3}
Let $\mathcal M$ be a von Neumann algebra with a semifinite,
faithful, normal, tracial weight $\tau$, and
$\mathcal{I}=span\{MeM:e=e^*=e^2 \in M \text{ with } \tau(e)<
\infty\} $ be the set of elementary operators in $\mathcal M$. We
define $L^p(\mathcal M,\tau)$, for $0<p<\infty$, to be the
completion of $\mathcal I$ under $\|\cdot \|_p$, i.e.
$$
L^p(\mathcal M,\tau)  =\overline{\mathcal I}^{\|\cdot \|_p}.
$$

As usual, we  let $L^\infty(\mathcal M, \tau)$ be $\mathcal M$.

\end{definition}

\begin{notation}If $S$ is a subset of $ L^p(\mathcal M,\tau)$ with
$0<p<\infty$, we will denote by $[S]_p$ the closure of $S$ in $
L^p(\mathcal M,\tau)$. If $S$ is a subset of $ \mathcal M $, we will
denote by $\overline{S}^{w*}$ the closure of $S$ in $\mathcal M $ under
the weak $*$-topology.
\end{notation}

The following two lemmas are well known.
        \begin{lemma}\label{Lemma2.5}Let $\mathcal M$ be a von Neumann algebra with a semifinite,
faithful, normal, tracial weight $\tau$. The following are true.
\begin{enumerate}
\item (H\"{o}lder's Inequality) For $0<p,q,r\le \infty$ with $1/p+1/q=1/r$, we have  $$\|xy\|_r\le \|x\|_{p}\|y\|_{q} \qquad \text{  for
all $x\in L^p(\mathcal M,\tau) $ and $y\in L^q(\mathcal M,\tau)$}.$$

 \item For each $0<r\le \infty$, we have           $\|axb\|_r\leq\|a\| \|x\|_r \|b\|$ for $x\in L^r(\mathcal M,\tau) $ and $a,b\in
 \mathcal{M}$. Therefore, $L^r(\mathcal M,\tau) $ is an $\mathcal M$
 bi-module for each $0<r\le \infty$.
 \item (Duality) For any $1\le p<\infty$ and $1<q\le \infty$ with $1/p+1/q=1$, we have
 $$
 (L^p(\mathcal M,\tau))^\sharp = L^q(\mathcal M,\tau) \qquad (\text{isometrically}),
 $$ where the duality between $L^p(\mathcal M,\tau)$ and $L^q(\mathcal M,\tau)$ is given by
$\langle x, y\rangle =\tau(xy).$  Thus, $L^1(\mathcal M,\tau)$ is the predual of $\mathcal M$.
 \end{enumerate}       \end{lemma}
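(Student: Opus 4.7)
The plan is to handle the three parts in sequence, establishing Hölder's inequality first, using it to deduce the bimodule estimate, and then leveraging both to establish duality. The unifying tool is reduction to finite corners via the net $\{e_\lambda\}_{\lambda\in\Lambda}$ of projections of finite trace from Lemma \ref{Lemma2.2}(2), combined with the Fack--Kosaki generalized singular value calculus $\mu_t(\cdot)$, which turns operator-theoretic inequalities into scalar ones on $(0,\infty)$ equipped with Lebesgue measure.

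For part (1), I would first prove Hölder for elementary operators $x,y\in\mathcal I$. Each element of $\mathcal I$ lies in a corner $e\mathcal Me$ with $\tau(e)<\infty$, so by enlarging projections we may assume $x,y\in e\mathcal Me$, reducing the statement to the classical case of a finite tracial von Neumann algebra; alternatively the estimate $\mu_t(xy)\le \mu_t(x)\mu_t(y)$ combined with the scalar Hölder inequality on $L^p((0,\infty))$ applied to $\tau(|xy|^r)=\int_0^\infty \mu_t(xy)^r\,dt$ yields it directly. Density of $\mathcal I$ in $L^p$ and $L^q$ extends the bound to arbitrary $x\in L^p(\mathcal M,\tau)$ and $y\in L^q(\mathcal M,\tau)$. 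For part (2), the operator inequality $|ax|^2=x^*a^*ax\le \|a\|^2|x|^2$ together with operator monotonicity of $t\mapsto t^{1/2}$ yields $|ax|\le\|a\||x|$, and symmetrically $|xb|\le\|b\||x|$; taking singular values gives $\mu_t(axb)\le\|a\|\|b\|\mu_t(x)$, and integrating against $t^{r}$ (or invoking part (1) with the $L^\infty$ factors) produces $\|axb\|_r\le \|a\|\|x\|_r\|b\|$ first on $\mathcal I$ and then on $L^r(\mathcal M,\tau)$ by density.

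For part (3), the pairing $\phi_y(x)=\tau(xy)$ is well defined on $\mathcal I$ (hence on $L^p$) by (1), and Hölder yields $\|\phi_y\|\le\|y\|_q$. The reverse inequality comes from the standard test vector: given $y$ with polar decomposition $y=u|y|$, one tests with a normalized multiple of $|y|^{q-1}u^*$ when $1<q<\infty$, and with spectral projections of $|y|$ when $q=\infty$, to get $\|\phi_y\|\ge\|y\|_q$. The main obstacle, and the only genuinely nontrivial step, is surjectivity: producing $y\in L^q$ from an arbitrary $\phi\in L^p(\mathcal M,\tau)^\sharp$. I would restrict $\phi$ to each finite corner $L^p(e_\lambda\mathcal M e_\lambda,\tau)$, invoke the classical finite tracial duality to obtain $y_\lambda\in e_\lambda L^q(\mathcal M,\tau) e_\lambda$ with $\phi(x)=\tau(xy_\lambda)$ for $x\in e_\lambda L^p e_\lambda$ and $\|y_\lambda\|_q\le\|\phi\|$, verify compatibility $e_\mu y_\lambda e_\mu=y_\mu$ whenever $e_\mu\le e_\lambda$, and pass to a limit. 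For $1<p<\infty$ the uniform bound together with reflexivity of $L^q$ on each corner produces $y\in L^q$ with $\|y\|_q\le\|\phi\|$; for $p=1$ (so $q=\infty$) one uses weak$^\ast$ compactness of the unit ball of $\mathcal M$ together with Lemma \ref{Lemma2.1} to extract $y\in\mathcal M$ as a weak$^\ast$ limit of the net $\{y_\lambda\}$, and finally checks $\phi=\phi_y$ on a dense subset of $L^1$.
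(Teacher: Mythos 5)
The paper offers no proof of this lemma at all---it simply cites Fack and Kosaki \cite{Fa}---so your sketch is already more than the source provides, and its overall architecture (prove the inequalities on the ideal $\mathcal I$ of elementary operators, extend by density, and obtain duality by exhausting $\mathcal M$ with finite-trace corners $e_\lambda\mathcal Me_\lambda$ and passing to a weak or weak$^*$ limit) is the standard and correct one. Parts (2) and (3) are fine as outlined: $\mu_t(axb)\le\|a\|\,\|b\|\,\mu_t(x)$ is a genuine property of the generalized singular numbers, and the corner-compatibility argument combined with reflexivity of $L^q$ for $1<p<\infty$, or with weak$^*$ compactness of the unit ball of $\mathcal M$ and Lemma \ref{Lemma2.1} for $p=1$, is exactly how the semifinite duality is usually deduced from the finite tracial case.

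The one genuine error is the inequality $\mu_t(xy)\le\mu_t(x)\mu_t(y)$, which you offer as the ``direct'' route to part (1). This equal-index pointwise inequality is false: already for $2\times2$ matrices, $A=\mathrm{diag}(1,\epsilon)$ and $B=\mathrm{diag}(\epsilon,1)$ give $AB=\epsilon I$, so for $t\in[1,2)$ one has $\mu_t(AB)=\epsilon$ while $\mu_t(A)\mu_t(B)=\epsilon^2$. What Fack--Kosaki actually prove is $\mu_{t+s}(xy)\le\mu_t(x)\mu_s(y)$ together with the submajorization $\int_0^t\mu_s(xy)\,ds\le\int_0^t\mu_s(x)\mu_s(y)\,ds$; H\"older for $r\ge1$ follows from the latter because $t\mapsto t^r$ is convex and increasing, but for $0<r<1$ concave powers do not preserve submajorization and one must instead use the multiplicative (log-)submajorization $\int_0^t\log\mu_s(xy)\,ds\le\int_0^t\log\bigl(\mu_s(x)\mu_s(y)\bigr)\,ds$. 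Your alternative route---absorbing $x,y\in\mathcal I$ into a single finite corner $e\mathcal Me$ (legitimate, since elementary operators have left and right supports of finite trace) and quoting the finite tracial case---is sound but merely relocates the work, and for $0<r<1$ the finite case rests on the same majorization machinery. A smaller point in part (3): for $q=\infty$ the test elements must be finite-trace spectral cut-downs of $|y|$, which exist precisely because $\tau$ is semifinite; this deserves an explicit word.
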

 \begin{proof}
See   \cite {Fa}.
 \end{proof}

 \begin{lemma}\label{Lemma2.6.2}Let $\mathcal M$ be a von Neumann algebra with a semifinite,
faithful, normal, tracial weight $\tau$ and $0<p<\infty$. If
$\{e_\lambda\}_{\lambda\in\Lambda}$ is a net of projections in
$\mathcal M$ such that such that   $e_\lambda\rightarrow I$ in the weak
$*$-topology, then   for
every  $ x\in L^p(\mathcal M,\tau)$
$$\lim_\lambda \|e_\lambda x   -x\|_p=0; \ \ \lim_\lambda \|  x e_\lambda -x\|_p=0; \ \ \text{ and } \ \
\lim_\lambda \|e_\lambda x e_\lambda -x\|_p=0.
$$
        \end{lemma}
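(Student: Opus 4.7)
Set $p_\lambda:=I-e_\lambda$; these are projections with $\|p_\lambda\|\leq 1$ and $p_\lambda\to 0$ in weak-$*$. By Lemma~\ref{Lemma2.5}(2), $\|p_\lambda y\|_p\leq\|y\|_p$ for every $y\in L^p(\mathcal M,\tau)$, and a routine estimate (using the triangle inequality for $p\geq 1$ or its $p$-normed analogue for $0<p<1$) shows that $\{y:\|p_\lambda y\|_p\to 0\}$ is closed in $L^p(\mathcal M,\tau)$. It therefore suffices to verify convergence on a dense subclass. Given $x\in L^p(\mathcal M,\tau)$ with polar decomposition $x=u|x|$, set $f_n:=\chi_{[1/n,n]}(|x|)$; from $n^{-p}f_n\leq|x|^p f_n$ one obtains $\tau(f_n)\leq n^p\|x\|_p^p<\infty$, while $\|x-xf_n\|_p^p=\tau((I-f_n)|x|^p)\to 0$ by normality of $\tau$. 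Hence it is enough to treat $x\in\mathcal M$ with $x=xf$ and $\tau(f)<\infty$.

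For such $x$ and $p\geq 2$, I would apply the operator inequality $T^{p/2}\leq\|T\|^{p/2-1}T$ (a consequence of $A\leq I\Rightarrow A^{p/2}\leq A$ for $p\geq 2$, applied to $A=\|T\|^{-1}T$) to $T:=x^*p_\lambda x\geq 0$, which satisfies $\|T\|\leq\|x\|^2$:
$$
\|p_\lambda x\|_p^p=\tau\bigl((x^*p_\lambda x)^{p/2}\bigr)\leq\|x\|^{p-2}\tau(x^*p_\lambda x)=\|x\|^{p-2}\tau(p_\lambda xx^*).
$$
Since $xx^*\in L^1(\mathcal M,\tau)$ (indeed $\tau(xx^*)\leq\|x\|^2\tau(f)$), Lemma~\ref{Lemma2.5}(3) makes $y\mapsto\tau(y\,xx^*)$ weak-$*$-continuous on $\mathcal M$, forcing $\tau(p_\lambda xx^*)\to 0$.

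The main obstacle is the range $0<p<2$, where the power inequality above reverses. My plan is to interpolate between the $p=2$ case just established and a uniform bound at a smaller exponent. From $|p_\lambda x|^2=fx^*p_\lambda xf\leq\|x\|^2 f$ we see that $|p_\lambda x|$ is positive, supported under $f$, and of operator norm at most $\|x\|$; consequently $|p_\lambda x|^r\leq\|x\|^r f$ and $\|p_\lambda x\|_r^r\leq\|x\|^r\tau(f)$ for every $r>0$. Applying the non-commutative H\"older inequality (Lemma~\ref{Lemma2.5}(1)) to the factorisation $|p_\lambda x|^p=|p_\lambda x|^{\theta p}\,|p_\lambda x|^{(1-\theta)p}$, with $\theta\in(0,1)$ chosen so that $\tfrac{1}{p}=\tfrac{\theta}{r}+\tfrac{1-\theta}{2}$, yields
$$
\|p_\lambda x\|_p\leq\|p_\lambda x\|_r^{\theta}\,\|p_\lambda x\|_2^{1-\theta}.
$$
Fixing any $r\in(0,p)$, the first factor is uniformly bounded in $\lambda$ while the second tends to $0$ by the $p=2$ case, giving $\|p_\lambda x\|_p\to 0$.

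Finally, applying the established fact to $x^*$ and using $\|z^*\|_p=\|z\|_p$ gives $\|xp_\lambda\|_p=\|p_\lambda x^*\|_p\to 0$, and the identity $e_\lambda xe_\lambda-x=-e_\lambda xp_\lambda-p_\lambda x$ together with Lemma~\ref{Lemma2.5}(2) yields
$$
\|e_\lambda xe_\lambda-x\|_p\leq\|xp_\lambda\|_p+\|p_\lambda x\|_p\to 0,
$$
completing the proof.
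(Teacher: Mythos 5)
Your proof is correct and follows essentially the same route as the paper's: reduce by density to bounded elements supported on the right by a finite-trace projection, settle $p=2$ by pairing the weak-$*$ null net $I-e_\lambda$ against $xx^*\in L^1(\mathcal M,\tau)$, use the operator inequality $T^{p/2}\le\|T\|^{p/2-1}T$ for $p\ge 2$, and a H\"older estimate exploiting the finite-trace support for $0<p<2$. The only differences are cosmetic: you build the dense class by spectral truncation of $|x|$ rather than invoking density of the elementary operators, you interpolate through an auxiliary exponent $r<p$ where the paper applies H\"older directly against $\|f\|_q$ with $1/p=1/2+1/q$, and your final display should use the $p$-th-power form of the triangle inequality when $0<p<1$ (as you yourself note earlier in the argument).
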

\begin{proof} For the purpose of completeness, we include a proof here.  Notice
that the set $\mathcal I$ of elementary operators of $\mathcal M$ is
dense in $L^p(\mathcal M,\tau)$ (see Definition
\ref{Definition2.3}) and
$$ \|e_\lambda x e_\lambda -x\|_p= \|e_\lambda (x e_\lambda -  x) + e_\lambda x -x\|_p, \ \ \forall \ x\in  L^p(\mathcal M,\tau)
.$$  Because of Lemma \ref{Lemma2.5}, it suffices to show that,  for all $a, b \in \mathcal M$
and a projection $f$ in $\mathcal M$ with $\tau (f)<\infty$, we have
$ \lim_\lambda \|e_\lambda  (afb)
  -afb\|_p=0  $ and $ \lim_\lambda \|  (afb)
e_\lambda -afb\|_p=0. $

Assume that $0<p<2$. Let $q$ be a positive number such that
$1/p=1/2+1/q$. We have
\begin{align}
 \|e_\lambda  (afb) -afb\|_p &=  \|(e_\lambda-I)  af b\|_p \notag \\
 &\le  \|b\| \|(e_\lambda-I) af\|_p \tag{by Lemma \ref{Lemma2.5}}\\
 &\le \|b\| \|(e_\lambda-I) af\|_2 \|f\|_q \tag{ by H\"{o}lder's Inequality
 }\\
 &= \|b\|\|f\|_q \left ( \tau (fa^*(I-e_\lambda)af) \right
 )^{1/2}\notag \\
 &= \|b\|\|f\|_q \left ( \tau ( (I-e_\lambda)afa^*) \right
 )^{1/2}.\notag
\end{align}
Observe that   $e_\lambda\rightarrow I$ in weak $*$-topology and $
afa^*\in L^1(\mathcal M,\tau)$ as $\tau(f)<\infty$. We have that
\begin{align}
 \lim_\lambda  \tau ( (I-e_\lambda)afa^*)
   =0. \label{equation2.3.1}
\end{align} It follows that $ \lim_\lambda \|e_\lambda  (afb)
  -afb\|_p=0.  $ Furthermore, we have that $ \lim_\lambda \|  (afb)
e_\lambda -afb\|_p= \lim_\lambda \|   e_\lambda b^*fa^* -
b^*fa^*\|_p=0, $ for $0<p<2$.

Assume that $2\le p<\infty$. We have
\begin{align}
\|e_\lambda  (afb) -afb\|_p &\le  \|b\| \|(e_\lambda-I) af\|_p  \tag{by Lemma \ref{Lemma2.5}}\\
&= \|b\| \left (\tau( (fa^*(I-e_\lambda)af)^{p/2})  \right
)^{1/p}\notag\\
&\le \|b\| \|(fa^*(I-e_\lambda)af)^{\frac p 2 -1}\|^{1/p} \left
(\tau  (fa^*(I-e_\lambda)af)  \right )^{1/p} \tag{by the property of
$\tau$}
\end{align}
Note from Equation (\ref{equation2.3.1}) that
$$
 \lim_\lambda \tau  (fa^*(I-e_\lambda)af) =\lim_\lambda \tau ( (I-e_\lambda)afa^*)
 =0.$$
 We have that $ \lim_\lambda \|e_\lambda  (afb)
  -afb\|_p=0.  $ Furthermore, we have that $ \lim_\lambda \|  (afb)
e_\lambda -afb\|_p= \lim_\lambda \|   e_\lambda b^*fa^* -
b^*fa^*\|_p=0, $ for $2\le p<\infty$.

This ends the proof of the lemma.
\end{proof}

    \subsection{Arveson's Non-Commutative Hardy Space}
    In this subsection, we will recall Arveson's definition of non-commutative Hardy spaces. Assume that $\mathcal{M}$ is a von Neumann algebra
     with a semifinite, faithful, normal tracial weight $\tau$. Assume $\mathcal{A}\subseteq \mathcal{M}$ is a weak*-closed subalgebra of $\mathcal M$,
     and let $\mathcal{D}=\mathcal{A}\cap\mathcal{A}^*$.  Assume that $\Phi:\mathcal{M}\rightarrow\mathcal{D}$ is faithful,
     normal
      conditional expectation  from $\mathcal M$ onto $\mathcal D$. 

    \newtheorem{hardyspace}[theorem]{Definition}
        \begin{hardyspace} \label{Definition2.7}
          $\mathcal{A}$ is a called a semifinite subdigonal subalgebra, or a semifinite non-commutative Hardy space,  with respect to $(\mathcal M, \Phi)$ if
                \begin{enumerate}
                   \item The restriction of $\tau$ on
        $\mathcal{D}=\mathcal{A}\cap\mathcal{A}^*$ is semifinite.
                    \item $\Phi(xy)=\Phi(x)\Phi(y)$ for every $x,y\in \mathcal{A}$.
                    \item $\mathcal{A}+\mathcal{A}^*$ is weak* dense in $M$.
                    \item $\tau(\Phi(x))=\tau(x)$ for every positive operator $x$ in $M$.
                \end{enumerate}
        In this case, $\mathcal A$ will also be denoted by
        $H^\infty$. Furthermore, we denote $[\mathcal
        A\cap L^p(\mathcal M,\tau)]_p$ by $H^p$ for each $0<p<\infty$.
        \end {hardyspace}
\begin{remark}\label{Remark2.8}
It was shown in \cite{Xu} and \cite {Ji} that such a subalgebra
$H^\infty$   with respect to $(\mathcal M,\Phi)$ is maximal among
semifinite subdigonal subalgebras satisfying (1), (2), (3) and (4). From this fact,
it follows that
$$
H^\infty= \{  a\in \mathcal M  \ : \ \Phi (xay)=0, \ \ \forall \
x\in H^\infty, y\in H^\infty\cap Ker(\Phi)\}.
$$
\end{remark}

\begin{remark}\label{Remark2.10} Following notation from Definition
\ref{Definition2.7}, we know that the conditional expectation
$\Phi:\mathcal M\rightarrow
  \mathcal D$ can be extended to a projection from $L^p(\mathcal
  M,\tau)$ onto $L^p(\mathcal D,\tau)$ for each $1\le p<\infty$ (see
  Proposition 2.3 in \cite{Xu} or \cite{Be}). Such
  extended projection will still be denoted by $\Phi$.  Moreover,
$$
\Phi(axb)=a\Phi(x)b, \qquad \forall \ a, b\in \mathcal D, \ x\in
L^p(\mathcal
  M,\tau), \ 1\le p<\infty.
$$ \end{remark}

\begin{notation} We will let $H^\infty_0= \ker(\Phi)\cap H^\infty$ and $H^p_0
=\ker(\Phi)\cap H^p$.
\end{notation}

The next result follows directly from Definition
\ref{Definition2.7} and can be found in Lemma 3.1 of \cite{Be}.
        \begin{lemma} \label{Lemma2.10}
            If $e$ is a projection in $ \mathcal{D}=H^\infty\cap (H^\infty)^*$  with $0<\tau(e)<\infty$, then $eH^\infty e$ is a finite subdiagonal subalgebra  of $e\mathcal{M}e$, and $[eH^\infty e]_p=eH^pe$.
        \end{lemma}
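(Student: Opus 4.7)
The plan is to verify the four axioms of a finite subdiagonal subalgebra for $eH^\infty e \subseteq e\mathcal{M}e$ and then prove the $L^p$-closure identity by standard compression arguments.

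First, since $e \in \mathcal{D} \subseteq \mathcal{M}$ and $\tau(e) < \infty$, the compression $e\mathcal{M}e$ is a von Neumann algebra on which the restriction of $\tau$ is a finite tracial weight. Because $e$ lies in $\mathcal{D} = H^\infty \cap (H^\infty)^*$, we have $eH^\infty e \subseteq H^\infty$, and $(eH^\infty e) \cap (eH^\infty e)^* = e\mathcal{D}e$. Define $\Phi_e : e\mathcal{M}e \to e\mathcal{D}e$ by $\Phi_e(exe) = e\Phi(x)e$. This is well-defined and equal to $\Phi|_{e\mathcal{M}e}$ because Remark~\ref{Remark2.10} gives $\Phi(exe) = e\Phi(x)e$. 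Consequently $\Phi_e$ inherits faithfulness and normality from $\Phi$.

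Next I would check the four conditions of Definition~\ref{Definition2.7} in turn. Finiteness of $\tau$ on $e\mathcal{D}e$ is immediate since $\tau(x) \le \|x\|\tau(e) < \infty$ for positive $x \in e\mathcal{D}e$. The multiplicativity $\Phi_e(xy)=\Phi_e(x)\Phi_e(y)$ on $eH^\infty e$ follows from the corresponding identity for $\Phi$ on $H^\infty$ together with $eH^\infty e \subseteq H^\infty$. The weak$^*$-density of $eH^\infty e + (eH^\infty e)^*$ in $e\mathcal{M}e$ follows by compressing the density of $H^\infty+(H^\infty)^*$ in $\mathcal{M}$: Lemma~\ref{Lemma2.1} gives continuity of $z \mapsto eze$ in the weak$^*$-topology, so a net $a_\alpha + b_\alpha^* \to z \in e\mathcal{M}e$ in $\mathcal{M}$ yields $ea_\alpha e + (eb_\alpha e)^* \to eze = z$. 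Finally, for positive $x \in e\mathcal{M}e$, $\tau(\Phi_e(x)) = \tau(e\Phi(x)e) = \tau(\Phi(x)e) = \tau(\Phi(xe))$, and since $x=exe$ and $\tau\circ\Phi = \tau$ on $\mathcal{M}^+$, this equals $\tau(x)$.

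For the identity $[eH^\infty e]_p = eH^pe$, I would argue both inclusions using the Hölder-type estimate from Lemma~\ref{Lemma2.5}. For $\subseteq$: any $exe \in eH^\infty e$ satisfies $\|exe\|_p \le \tau(e)^{1/p}\|x\|<\infty$, so $eH^\infty e \subseteq H^\infty \cap L^p(\mathcal{M},\tau)$, hence $[eH^\infty e]_p \subseteq H^p$; moreover the left and right multiplications by $e$ are $p$-norm continuous, so elements of $[eH^\infty e]_p$ lie in $eL^p(\mathcal{M},\tau)e$, giving $[eH^\infty e]_p \subseteq eH^pe$. For $\supseteq$: if $y \in H^p$, pick $y_n \in H^\infty \cap L^p(\mathcal{M},\tau)$ with $y_n \to y$ in $\|\cdot\|_p$; then $ey_n e \in eH^\infty e$ and $\|ey_ne - eye\|_p \le \|e\|\,\|y_n-y\|_p\,\|e\|\to 0$, so $eye \in [eH^\infty e]_p$.

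No step looks like a serious obstacle; the only point demanding care is verifying that the compressed conditional expectation agrees with $\Phi$ restricted to $e\mathcal{M}e$, which is exactly what the bimodule property in Remark~\ref{Remark2.10} provides.
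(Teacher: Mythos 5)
Your verification is correct in substance, but it is worth noting that the paper does not actually prove this lemma at all: it simply records that the statement ``follows directly from Definition~\ref{Definition2.7} and can be found in Lemma 3.1 of \cite{Be}.'' So your direct compression argument supplies what the paper outsources to Bekjan. The route you take --- checking the four axioms of Definition~\ref{Definition2.7} for $eH^\infty e\subseteq e\mathcal Me$ via the identity $\Phi(exe)=e\Phi(x)e$ from Remark~\ref{Remark2.10}, and then proving $[eH^\infty e]_p=eH^pe$ by two-sided $\|\cdot\|_p$-continuity of compression --- is the standard one and is sound, including for $0<p<1$ where Lemma~\ref{Lemma2.5} still gives the needed H\"older and bimodule estimates. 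Three small points deserve to be made explicit. First, Definition~\ref{Definition2.7} requires the subalgebra to be weak$^*$-closed; this holds because $e\in\mathcal D\subseteq H^\infty\cap(H^\infty)^*$ gives $eH^\infty e=H^\infty\cap e\mathcal Me$, an intersection of weak$^*$-closed sets. Second, your identity $(eH^\infty e)\cap(eH^\infty e)^*=e\mathcal De$ needs the one-line observation that any $x$ in the left-hand side satisfies $x=exe\in H^\infty$ and $x\in(H^\infty)^*$, hence $x\in e\mathcal De$. Third, the weak$^*$-continuity of $z\mapsto eze$ used for the density axiom is not what Lemma~\ref{Lemma2.1} asserts (that lemma concerns nets of projections tending to $I$); the correct justification is that compression by a fixed element is the Banach-space adjoint of the bounded map $\varphi\mapsto e\varphi e$ on the predual $L^1(\mathcal M,\tau)$. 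None of these is a genuine gap; each is a routine fill.
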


 %



We will need the following technical lemma in the later sections.

        \begin{lemma}\label{lemma2.12}
            Suppose $\mathcal{M}$ is a   von Neumann algebra with a semifinite, faithful, normal, tracial weight $\tau$. Let $H^\infty$ be a semifinite subdiagonal subalgebra in $\mathcal M$ in the sense of Definition \ref{Definition2.7} (namely, the restriction of $\tau$ on
        $\mathcal{D}=H^\infty\cap (H^\infty)^*$ is semifinite).

         Then for every $x\in L^p(M,\tau)$ with $0< p <\infty$  and for every $e\in\mathcal{D}$ with $0<\tau(e)<\infty$, there exist  an $h_1, h_3\in eH^\infty e$ and an $h_2, h_4 \in eH^p e$ such that:
            \begin{enumerate}
                \item[(i)] $h_1 h_2=e=h_2 h_1$ and $h_3h_4=h_4h_3=e$
                \item[(ii)] $h_1 ex$ and $xeh_3$ are in
                $\mathcal{M}$.
            \end{enumerate}
        \end{lemma}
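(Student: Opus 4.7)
The plan is to work inside the corner $e\mathcal{M}e$, which is a finite von Neumann algebra because $\tau(e)<\infty$. Lemma~\ref{Lemma2.10} tells us that $eH^\infty e$ is a finite subdiagonal subalgebra there and $[eH^\infty e]_p=eH^p e$, so the full toolkit of the finite subdiagonal theory is available. The key input is the outer factorization theorem of Arveson (and its sharpenings by Blecher--Labuschagne): any bounded positive $k\in(e\mathcal{M}e)_+$ whose Fuglede--Kadison determinant is strictly positive (equivalently, $\tau(\log k)>-\infty$) can be written as $k^2=h^*h$ for some outer $h\in eH^\infty e$.

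For the pair $(h_1,h_2)$, set $y=ex$ and consider the positive operator $g=e+yy^*$ affiliated with $e\mathcal{M}e$, so that $g\ge e$. Using the scalar inequalities $(1+t)^{p/2}\le C_p(1+t^{p/2})$ and $\log(1+t)\le C_p(1+t^{p/2})$ for $t\ge 0$, functional calculus yields $\tau(g^{p/2})<\infty$ and $\tau(\log g)<\infty$. The second estimate is exactly the Fuglede--Kadison positivity needed to outer-factor the bounded positive $g^{-1/2}$, producing $h_1\in eH^\infty e$ outer with $h_1^*h_1=g^{-1}$; invertibility of $g^{-1/2}$ in $e\mathcal{M}e$ forces the polar decomposition $h_1=ug^{-1/2}$ with $u$ unitary in $e\mathcal{M}e$. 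Setting $h_2=h_1^{-1}=g^{1/2}u^*$ gives an element of $eL^p e$ (since $\|g^{1/2}\|_p=\tau(g^{p/2})^{1/p}<\infty$), and the standard fact that the inverse of an outer element of $H^\infty$ lies in $H^p$ whenever it lies in $L^p$ places $h_2\in eH^p e$. Clause (i) is tautological, and a direct computation
\[
(h_1ex)(h_1ex)^*=ug^{-1/2}(g-e)g^{-1/2}u^*=u(e-g^{-1})u^*\le e
\]
gives $\|h_1ex\|_\infty\le 1$, establishing clause (ii).

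For the pair $(h_3,h_4)$, repeat the argument with $y$ replaced by $y'=xe$ and $g'=e+y'^*y'=e+ex^*xe$. The required form is now $h_3h_3^*=g'^{-1}$ rather than $h_3^*h_3=g'^{-1}$: apply outer factorization in the adjoint subdiagonal algebra $(eH^\infty e)^*$ of $e\mathcal{M}e$, which is subdiagonal with respect to the same conditional expectation $\Phi$ (since $\Phi$ is $*$-preserving, multiplicativity on $\mathcal{A}$ forces multiplicativity on $\mathcal{A}^*$). Polar-decomposing $h_3=g'^{-1/2}v$ with $v$ unitary in $e\mathcal{M}e$, setting $h_4=h_3^{-1}=v^*g'^{1/2}\in eH^p e$, and computing
\[
(xe\,h_3)^*(xe\,h_3)=v^*(e-g'^{-1})v\le e
\]
completes the construction.

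The main technical hurdle is verifying $\tau(\log g)<\infty$; the whole proof pivots on the scalar estimate $\log(1+t)\le C_p(1+t^{p/2})$, which exactly matches the logarithm to the available $L^p$-integrability of $yy^*$. A secondary delicate point is securing $h_1$ and $h_3$ inside $eH^\infty e$ itself (rather than its adjoint), which is why the right-hand construction is phrased via outer factorization in the adjoint subdiagonal algebra $(eH^\infty e)^*$; simply taking adjoints in the first construction would produce elements in $(eH^\infty e)^*$, not $eH^\infty e$.
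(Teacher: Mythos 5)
Your argument is correct, and it follows the same skeleton as the paper's proof --- pass to the finite corner $e\mathcal{M}e$ with normalized trace, invoke Lemma \ref{Lemma2.10} to get the finite subdiagonal algebra $eH^\infty e$ with $[eH^\infty e]_p=eH^pe$, and then factorize an auxiliary positive operator of the form ``$(e+\text{positive part of }ex)^{-1}$'' --- but the two proofs diverge in which factorization theorem does the work and how boundedness is extracted. The paper takes the polar decomposition $ex=|x^*e|u_1$, sets $w=(e+|x^*e|)^{-1}$, and cites Theorem 3.1 of \cite{BX} as a black box to get $w=vh_1$ with $h_1\in eH^\infty e$, $h_1^{-1}=h_2\in eH^pe$; boundedness of $h_1ex$ then follows because $v^*(e+|x^*e|)^{-1}|x^*e|$ is a bounded operator times the partial isometry $u_1$. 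You instead work with $g=e+exx^*e$, re-derive the needed factorization from the more primitive Arveson/Blecher--Labuschagne outer factorization under the Fuglede--Kadison determinant hypothesis, verify that hypothesis by the explicit scalar estimate $\log(1+t)\le C_p(1+t^{p/2})$, and get boundedness by the direct computation $(h_1ex)(h_1ex)^*=u(e-g^{-1})u^*\le e$, bypassing the polar decomposition of $ex$ entirely. What your route buys is self-containedness (you are essentially unpacking the proof of the cited Bekjan--Xu theorem) and an explicit norm bound $\|h_1ex\|\le 1$; what the paper's route buys is brevity. You also make explicit a point the paper waves away with ``the proof for $h_3,h_4$ is similar,'' namely that producing $h_3\in eH^\infty e$ (not its adjoint) with the factorization on the correct side requires running the argument in the adjoint subdiagonal algebra $(eH^\infty e)^*$; that is a genuine and worthwhile clarification. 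Two small imprecisions to fix: $g^{-1/2}$ is in general only injective with dense range, not invertible in $e\mathcal{M}e$ (its inverse $g^{1/2}$ is typically unbounded), so the unitarity of $u$ in the polar decomposition $h_1=ug^{-1/2}$ should be justified by injectivity of $|h_1|$ together with finiteness of $e\mathcal{M}e$; and the two standard facts you lean on (outer factorization of an $L^1_+$ element with nonvanishing determinant, and membership of the inverse of an outer element in $H^p$ once it lies in $L^p$) deserve precise citations, just as the paper cites \cite{BX}.
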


    \begin{proof}
        Let
            $$ex=\sqrt{exx^*e}u_1=|x^*e|u_1$$
       be  the polar decomposition of $ex$ in $L^p(\mathcal M,\tau)$,
        where $u_1$ is a partial isometry in $\mathcal M$ and $|x^*e|$ is a positive operator in $L^p(\mathcal M,\tau)$.
        It is not hard to see that  $|x^*e|\in eL^p(\mathcal{M},\tau)e=L^p(e\mathcal{M}e,\tau)$. Since $0<\tau(e)<\infty$,
        we know that  $e\mathcal{M}e$ is a finite von Neumann algebra with a faithful normal
        tracial state $\frac{1}{\tau(e)}\tau$. And, from Lemma \ref{Lemma2.10}, it follows that $eH^\infty e$ is a finite
        subdigonal subalgebra of $e\mathcal Me$ with $[eH^\infty e]_p=eH^pe$. Note that
         $|x^*e|\in L^p(e\mathcal{M}e,\tau)$ and $0<\tau(e)<\infty$. Then $w=(e+|x^*e|)^{-1}$ is an
         invertible operator in $e\mathcal Me$ with $w^{-1}\in L^p(e\mathcal{M}e,\frac{1}{\tau(e)}\tau)$.
          From Theorem 3.1 in \cite{BX},   there exist a unitary $v $ in $e\mathcal Me$,  an $h_1\in eH^\infty e$  and an  $h_2\in eH^p e$ such that (i) $h_1 h_2 =e= h_2 h_1$, and (ii$_a$) $w=vh_1$. Now from (ii$_a$) we have      (ii$_b$) $h_1|x^* e| = v^*w|x^* e|= v^* (e+|x^*e|)^{-1} |x^* e| \in e\mathcal{M}e\subseteq\mathcal{M}$. Hence, from (ii$_b$) and the fact that  $u_1$ is a partial isometry in $\mathcal M$, we obtain that (ii) $h_1ex=h_1  |x^* e|u_1 \in \mathcal{M}.$
    The proof for the existence of $h_3$ and $h_4$ is similar.   This ends the proof of the lemma.
    \end{proof}

The following lemma is also useful.
\begin{lemma}\label{lemma2.13}
Suppose $\mathcal{M}$ is a   von Neumann algebra with a semifinite,
faithful, normal, tracial weight $\tau$. Let $H^\infty$ be a
semifinite subdiagonal subalgebra with respect to  $(\mathcal
M,\Phi)$, where $\Phi$ is a faithful, normal conditional expectation
from $\mathcal M$ onto
        $\mathcal{D}=H^\infty\cap (H^\infty)^*$.

Then there exists        a net
            $\{e_\lambda\}_{\lambda\in\Lambda}$ of projections in
            $\mathcal D$ such that  such that
\begin{enumerate}
\item [(i)]            $e_\lambda \rightarrow I$ in the weak $*$-topology of $\mathcal M$ and
$\tau(e_\lambda)<\infty$ for each $\lambda\in\Lambda$.
\item [(ii)] We have,   for
every  $ x\in L^p(\mathcal M,\tau)$ with $0<p<\infty$,
$$\lim_\lambda \|e_\lambda x   -x\|_p=0; \ \ \lim_\lambda \|  x e_\lambda -x\|_p=0; \ \ \text{ and } \ \
\lim_\lambda \|e_\lambda x e_\lambda -x\|_p=0.
$$
\end{enumerate}
\end{lemma}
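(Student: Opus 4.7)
The plan is to piece together Lemma~\ref{Lemma2.2} and Lemma~\ref{Lemma2.6.2}, where the role of condition~(1) of Definition~\ref{Definition2.7}---that $\tau|_{\mathcal D}$ is semifinite---is to let us produce the approximate identity inside $\mathcal D$ rather than merely inside $\mathcal M$. Without that hypothesis there would be no reason to expect such a net to exist in $\mathcal D$ at all.

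First, I would view $\mathcal D=H^\infty\cap(H^\infty)^*$ as a von Neumann algebra in its own right, equipped with the faithful, normal, semifinite tracial weight $\tau|_{\mathcal D}$, and apply Lemma~\ref{Lemma2.2} inside $\mathcal D$. This produces a family $\{f_j\}_{j\in J}$ of pairwise orthogonal projections in $\mathcal D$ with $\tau(f_j)<\infty$ and $\sum_j f_j=I$ in the weak $*$-topology of $\mathcal D$. Taking $\Lambda$ to be the finite subsets of $J$ directed by inclusion and setting $e_\lambda=\sum_{j\in\lambda}f_j$ gives an increasing net of projections in $\mathcal D$ with $\tau(e_\lambda)<\infty$ for each $\lambda$ and with supremum (in $\mathcal D$) equal to $I$.

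The delicate point is upgrading the convergence $e_\lambda\to I$ from the weak $*$-topology of $\mathcal D$ to that of $\mathcal M$; this is what I expect to be the main obstacle. I would argue as follows: the inclusion $\mathcal D\hookrightarrow\mathcal M$ is a unital normal $*$-homomorphism (equivalently, $\mathcal D$ is weak-$*$ closed and contains $I$), so every element of $\mathcal M_\sharp$ restricts to an element of $\mathcal D_\sharp$. For any normal state $\phi$ on $\mathcal M$ the increasing net $\phi(e_\lambda)$ is then bounded and converges to $\phi(I)$ by normality of $\phi|_{\mathcal D}$. Since $\mathcal M_\sharp$ is spanned by normal states, this forces $e_\lambda\to I$ in $\sigma(\mathcal M,\mathcal M_\sharp)$, giving (i).

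Part (ii) is then an immediate application of Lemma~\ref{Lemma2.6.2} to the net $\{e_\lambda\}$ just constructed, since that lemma already handles any net of projections in $\mathcal M$ weak $*$-converging to $I$ and any $0<p<\infty$. Thus the whole proof reduces to the construction in paragraph~two plus the weak-$*$ upgrade in paragraph~three; the rest is direct invocation of previously recorded results.
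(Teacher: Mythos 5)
Your proof is correct, and it follows the same skeleton as the paper's (Lemma~\ref{Lemma2.2} to produce the net in $\mathcal D$, then Lemma~\ref{Lemma2.6.2} for part~(ii)), but the one step that carries real content --- upgrading $e_\lambda\to I$ from the weak $*$-topology of $\mathcal D$ to that of $\mathcal M$ --- is handled by a genuinely different argument. The paper keeps the net from Lemma~\ref{Lemma2.2}(2) (not necessarily monotone) and pushes the convergence into $\mathcal M$ through the conditional expectation: for $y\in L^1(\mathcal M,\tau)$ it writes $\tau(e_\lambda y-y)=\tau(\Phi(e_\lambda y-y))=\tau(e_\lambda\Phi(y)-\Phi(y))\to 0$, using that $\tau=\tau\circ\Phi$, the $\mathcal D$-bimodule property of $\Phi$, and the identification of $L^1$ with the predual. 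You instead arrange the net to be increasing (finite partial sums of the orthogonal family from Lemma~\ref{Lemma2.2}(1), which keeps $\tau(e_\lambda)<\infty$) and invoke normality of states on $\mathcal M$ restricted to $\mathcal D$, together with the fact that the supremum of a family of projections in a von Neumann subalgebra agrees with the supremum computed in the ambient algebra. Your route is more elementary in that it makes no use of $\Phi$ at all for part~(i) --- it works for any unital von Neumann subalgebra on which $\tau$ restricts to a semifinite weight --- at the cost of requiring monotonicity of the net; the paper's route applies to an arbitrary weak $*$-convergent net in $\mathcal D$ but leans on the trace-preserving conditional expectation. Both establish (i), and (ii) then follows identically from Lemma~\ref{Lemma2.6.2}.
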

\begin{proof}
   Since $H^\infty$ is a semifinite subdiagonal subalgebra
            of $\mathcal M$, the restriction of $\tau$ on  $\mathcal{D}  $ is semifinite. By Lemma
            \ref{Lemma2.2}, there exists a net
            $\{e_\lambda\}_{\lambda\in\Lambda}$ of projections in
            $\mathcal D$ such that    $e_\lambda \rightarrow I$ in the weak $*$-topology of $\mathcal D$ and $\tau(e_\lambda)<\infty$ for each $\lambda\in\Lambda$.
Thus
$$
\lim_{\lambda} |\tau(e_\lambda z -z)|=0, \qquad \forall \ z\in
L^1(\mathcal D,\tau).
$$ For each $y\in L^1(\mathcal M,\tau)$, we have
$$
\lim_{\lambda} |\tau(e_\lambda y -y)|=  \lim_{\lambda}
|\tau(\Phi(e_\lambda y -y))| = \lim_{\lambda} |\tau(e_\lambda\Phi(
y) -\Phi( y))| =0
.
$$ i.e.
\begin{enumerate}
\item [(i)]            $e_\lambda \rightarrow I$ in the weak $*$-topology of $\mathcal M$ and
$\tau(e_\lambda)<\infty$ for each $\lambda\in\Lambda$.
\end{enumerate}
From (i) and Lemma \ref{Lemma2.6.2}, we induce that
\begin{enumerate}
\item [(ii)] For
every  $ x\in L^p(\mathcal M,\tau)$,
$$\lim_\lambda \|e_\lambda x   -x\|_p=0; \ \ \lim_\lambda \|  x e_\lambda -x\|_p=0; \ \ \text{ and } \ \
\lim_\lambda \|e_\lambda x e_\lambda -x\|_p=0.
$$
\end{enumerate} This ends the proof of the lemma.
\end{proof}

Now we recall the following definition for the row sum of subspaces in $L^p(\mathcal
M,\tau)$ for $0<p\le \infty$ as follows.
\begin{definition}\label{definition2.12}
  Let $\mathcal M$ be a  von Neumann algebra with a semifinite, normal
faithful, tracial weight $\tau$ and $0<p<\infty$. Let $X$ be a closed subspace of $L^p(\mathcal
M,\tau)$. Then $X$ is called an internal row sum of
closed subspaces $\{X_i\}_{i\in \mathcal I}$ of $L^p(\mathcal
M,\tau)$, denoted by
$
X=\bigoplus^{row}%
_{i\in \mathcal I} X_i
$,  if
\begin{enumerate}
\item $X_jX_i^*=\{0\}$ for all distinct $i,j\in \mathcal I$; and
\item the linear span of $\{X_i\ :  \ i\in \mathcal I\}$ is
 dense  in $X$, i.e. $X=[span  \{X_i\ :  \ i\in \mathcal I\}]_p.$
\end{enumerate}
\end{definition}

\begin{definition}\label{definition2.15}
  Let $\mathcal M$ be a  von Neumann algebra. Let $X$ be a weak $*$-closed
subspace of $\mathcal M$. Then $X$ is called an internal row sum
of a family of weak*-closed subspaces $\{X_i\}_{i\in \mathcal I}$ of $\mathcal
M$, denoted by
$
X=\bigoplus^{row}%
_{i\in \mathcal I} X_i
$,  if
\begin{enumerate}
\item $X_j X_i^*=\{0\}$ for all distinct $i,j\in \mathcal I$; and
\item the linear span of $\{X_i\ :  \ i\in \mathcal I\}$ is
weak*-dense in $X$, i.e. $X=\overline{ span  \{X_i\ :  \ i\in \mathcal I\} }^{w*}$.
\end{enumerate}
\end{definition}

\section{Beurling-Blecher-Labuschagne Theorem for Semifinite Hardy Spaces, p=2}

\subsection{Main Result} In this section, we will prove a Beurling-Blecher-Labuschagne type theorem for semifinite non-commutative Hardy
spaces.
\begin{theorem}\label{theorem3.1}
Let $\mathcal M$ be a von Neumann algebra with a faithful, normal,
semifinite tracial weight $\tau$, and $H^\infty$ be  a
weak$^*$-closed subalgebra   of $\mathcal M$. Let $\mathcal
D=H^\infty\cap (H^\infty)^*$ be a von Neumann subalgebra of
$\mathcal M$, and $\Phi:\mathcal M\rightarrow \mathcal D$ be  a
faithful normal condition expectation.

Assume that $H^\infty$ is a semifinite subdigonal subalgebra with
respect to  $(\mathcal M,\Phi)$ (see Definition
\ref{Definition2.7}). Let $\mathcal K$ be a closed subspace of
$L^2(\mathcal M,\tau)$ satisfying $H^\infty\mathcal K\subseteq
\mathcal K$. Then there exist a closed subspace $Y$ of $L^2(\mathcal
M,\tau)$ and a family $\{u_\lambda\}_{\lambda\in\Lambda}$ of partial
isometries in $\mathcal M$, satisfying
\begin{enumerate}

\item  [(i)] $u_\lambda Y^*=0$ for all ${\lambda\in\Lambda}$.
\item  [(ii)] $u_\lambda u_\lambda^*\in \mathcal D$ and $u_\lambda u_\mu^*=0$
for all $\lambda, \mu\in \Lambda$ with $\lambda\ne \mu$.
\item  [(iii)] $Y=[H^\infty_0Y]_2$, where $H^\infty_0=H^\infty\cap
ker(\Phi)$.
\item [(iv)] $\mathcal K=Y\oplus \left ( \oplus_{\lambda\in\Lambda} H^2u_\lambda \right
)$
\end{enumerate}

\end{theorem}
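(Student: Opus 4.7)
The plan is to adapt the classical Wold--von Neumann decomposition to the present noncommutative Hardy setting, using the Hilbert space structure of $L^2(\mathcal M,\tau)$ to take orthogonal complements freely, and the semifiniteness of $\tau|_{\mathcal D}$ to reduce the necessary constructions to finite corners via Lemmas \ref{lemma2.12} and \ref{lemma2.13}.

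The first step is to form the wandering subspace
$$
\mathcal W := \mathcal K \ominus [H^\infty_0\mathcal K]_2,
$$
the orthogonal complement inside $\mathcal K$ of its own $H^\infty_0$-image. The key structural observation I would establish is that $\mathcal W\mathcal W^*\subseteq L^1(\mathcal D,\tau)$: for $w,w'\in\mathcal W$ and $h\in H^\infty_0$, the orthogonality $\mathcal W\perp[H^\infty_0\mathcal K]_2$ gives $\tau(w'^*hw)=\tau(h\,ww'^*)=0$, and $ww'^*\in L^1(\mathcal M,\tau)$ by H\"older. Standard Hardy-space duality (which carries over to the semifinite setting, identifying the pre-annihilator of $H^\infty_0$ in $L^1(\mathcal M,\tau)$ with $(H^1)^*$) then gives $ww'^*\in(H^1)^*$; a symmetric argument in the other variable yields $ww'^*\in H^1$, and consequently $ww'^*\in H^1\cap (H^1)^*=L^1(\mathcal D,\tau)$.

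Next, I would extract partial isometries from $\mathcal W$ via polar decomposition. For any nonzero $w\in\mathcal W$, write $w=u_w|w|$ as a polar decomposition in $L^2(\mathcal M,\tau)$; since $ww^*\in L^1(\mathcal D,\tau)$, the range projection $u_w u_w^*$ of $w$ lies in $\mathcal D$, and $u_w\in\mathcal M$. To produce a partial isometry fitting the required structure, I would pass to a finite corner: fix $e\in\mathcal D$ with $\tau(e)<\infty$ and $u_w u_w^*\le e$ (possible by Lemma \ref{lemma2.13}), and apply Lemma \ref{lemma2.12} to factor $w$ within $eL^2(\mathcal M,\tau)e$. Combined with Lemma \ref{Lemma2.10}, this lets the finite Blecher--Labuschagne machinery operate inside the corner and yields $H^2 u_w\subseteq\mathcal K$. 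A Zorn's lemma argument then selects a maximal family $\{u_\lambda\}_{\lambda\in\Lambda}$ of partial isometries in $\mathcal M$ with $u_\lambda u_\lambda^*\in\mathcal D$, pairwise range-orthogonal ($u_\lambda u_\mu^*=0$ for $\lambda\ne\mu$), and with $H^2 u_\lambda\subseteq\mathcal K$ for every $\lambda$.

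Finally, I would set $Y:=\mathcal K\ominus\bigl(\bigoplus_{\lambda\in\Lambda} H^2u_\lambda\bigr)$. Condition (ii) is built in, (iv) holds by construction, and (i) follows by combining the $L^2$-orthogonality $Y\perp H^2 u_\lambda$ with the left $H^\infty$-invariance of $Y$ and a density argument exploiting $\mathcal M=\overline{H^\infty+(H^\infty)^*}^{w^*}$ to upgrade the scalar orthogonality to the operator identity $u_\lambda Y^*=0$. Condition (iii) then follows from the maximality of the family: any nontrivial wandering vector in $Y\ominus[H^\infty_0Y]_2$ would produce, via the Step-2 construction, another partial isometry row-orthogonal to all the $u_\lambda$'s, contradicting maximality. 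I expect the main obstacle to be Step 2: unlike the finite-trace case where $\mathcal M\subseteq L^2$ and Blecher--Labuschagne's factorization applies directly, here the partial isometries produced by $L^2$ polar decompositions must be shown to satisfy $H^2u\subseteq\mathcal K$, which requires simultaneously approximating by finite corners via Lemma \ref{lemma2.13}, applying Lemma \ref{lemma2.12}, and patching the local partial isometries coherently into a single maximal row-orthogonal family over the whole semifinite algebra.
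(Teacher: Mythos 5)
Your overall architecture matches the paper's: the wandering subspace $\mathcal W=\mathcal K\ominus[H_0^\infty\mathcal K]_2$, the key fact $\mathcal W\mathcal W^*\subseteq L^1(\mathcal D,\tau)$, polar decompositions plus Zorn's lemma to build a maximal row-orthogonal family $\{u_\lambda\}$, and $Y$ as the residual summand with (i) and (iii) obtained by orthogonality/maximality arguments (the paper's Lemmas \ref{lemma3.4} and \ref{lemma3.5}). However, your Step 2 contains a genuine gap. You propose to ``fix $e\in\mathcal D$ with $\tau(e)<\infty$ and $u_wu_w^*\le e$,'' citing Lemma \ref{lemma2.13}. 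That lemma only produces a net of finite-trace projections converging to $I$ in the weak$^*$ topology; it does not dominate a prescribed projection. Since $u_wu_w^*\le e$ forces $\tau(u_wu_w^*)\le\tau(e)<\infty$, and the range projection of a wandering vector can perfectly well have infinite trace (take $\mathcal M=B(\mathcal H)$, $\mathcal D$ the diagonal, and $ww^*=\mathrm{diag}(1/n^2)$, which lies in $L^1(\mathcal D,\tau)$ but has support projection $I$), no such $e$ exists in general. The corner reduction is also unnecessary: the paper gets $[H^\infty w]_2=H^2u_w\subseteq\mathcal K$ directly from the facts that $h=|w^*|\in L^2(\mathcal D,\tau)$ and that $u_wu_w^*$, being the support projection of $h$, lies in $\mathcal D$, so that $[\mathcal D h]_2=L^2(\mathcal D,\tau)u_wu_w^*$ and hence $[\mathcal D w]_2=L^2(\mathcal D,\tau)u_w$ and $[H^\infty w]_2=H^2u_w$. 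Lemmas \ref{lemma2.12} and \ref{Lemma2.10} are not used in the $p=2$ argument at all; they enter only in the density propositions of Sections 4 and 5. So your diagnosis of where the semifinite difficulty lies is off target.

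A secondary point: for $\mathcal W\mathcal W^*\subseteq L^1(\mathcal D,\tau)$ you invoke the identification of the preannihilator of $H_0^\infty$ in $L^1$ with a Hardy space and the identity $H^1\cap(H^1)^*=L^1(\mathcal D,\tau)$. These are substantial facts whose semifinite versions would themselves need proof, and you only verify the test condition against $H_0^\infty$, not against $\mathcal D$ and $(H_0^\infty)^*$. The paper avoids this machinery entirely: it shows $ww'^*-\Phi(ww'^*)$ pairs trivially with all of $H_0^\infty$, $\mathcal D$, and $(H_0^\infty)^*$ separately, and then applies its Lemma \ref{lemma3.2} (weak$^*$ density of $H^\infty+(H^\infty)^*$ plus the duality $L^1(\mathcal M,\tau)^\sharp=\mathcal M$) to conclude $ww'^*=\Phi(ww'^*)\in L^1(\mathcal D,\tau)$. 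Your sketch of (i) and (iii) is workable, though for (iii) you should first show $Y\ominus[H_0^\infty Y]_2\subseteq\mathcal W$ (as the paper does) before the wandering-vector construction can be applied to it.
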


The proof of this   result uses a similar idea as the one in
\cite{BL2} for finite von Neumann algebras. We will  modify the
argument in \cite{BL2} to prove preceding result for the case of
semifinite von Neumann algebras. First, we present a series of
technical lemmas.

\subsection{Some lemmas} Following the notation above, we let $\mathcal M$ be a von Neumann algebra with a faithful, normal,
semifinite tracial weight $\tau$ and $H^\infty$ be a semifinite
subdigonal subalgebra   of $\mathcal M$. Let $\mathcal
D=H^\infty\cap (H^\infty)^*$ be a von Neumann subalgebra of
$\mathcal M$ and $\Phi:\mathcal M\rightarrow \mathcal D$   a
faithful normal conditional expectation. From Remark \ref{Remark2.10},
we know that $\Phi$ can be extended to a positive contraction from
$L^p(\mathcal
  M,\tau)$ onto $L^p(\mathcal D,\tau)$ for each $1\le p<\infty$ such
  that
$$
\Phi(axb)=a\Phi(x)b, \qquad \forall \ a, b\in \mathcal D, \ x\in
L^p(\mathcal
  M,\tau), \ 1\le p<\infty.
$$

We find the following   lemma is useful.
\begin{lemma}\label{lemma3.2}
Let $\mathcal M$ be a von Neumann algebra with a faithful, normal,
semifinite tracial weight $\tau$, and $H^\infty$  be  a semifinite
subdigonal subalgebra   of $\mathcal M$. If $x$ in $L^1(\mathcal
M,\tau)$ satisfies
$$
\tau(xz)=0 \qquad \text { for all } z\in H^\infty + (H^\infty)^*,
$$ then $x=0$.
\end{lemma}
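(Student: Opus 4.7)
The plan is a very short application of duality, so I expect no real obstacle. The key ingredients are already stated in the excerpt: Lemma \ref{Lemma2.5}(3) says that $L^1(\mathcal M,\tau)$ is isometrically the predual of $\mathcal M$ under the pairing $\langle x,y\rangle = \tau(xy)$, and Definition \ref{Definition2.7}(3) says that $H^\infty+(H^\infty)^*$ is weak $*$-dense in $\mathcal M$.

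First I would fix $x\in L^1(\mathcal M,\tau)$ satisfying the hypothesis, and consider the linear functional $\varphi_x:\mathcal M\to\mathbb C$ defined by $\varphi_x(y)=\tau(xy)$. By the predual identification of Lemma \ref{Lemma2.5}(3), $\varphi_x$ is weak $*$-continuous on $\mathcal M$, and the assignment $x\mapsto \varphi_x$ is isometric (in particular injective). Next I would observe that the hypothesis is precisely that $\varphi_x$ vanishes on the subspace $H^\infty+(H^\infty)^*$. Since $\varphi_x$ is weak $*$-continuous and $H^\infty+(H^\infty)^*$ is weak $*$-dense in $\mathcal M$ by Definition \ref{Definition2.7}(3), it follows that $\varphi_x\equiv 0$ on $\mathcal M$.

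Finally, using the isometric (hence injective) identification $x\mapsto\varphi_x$ from $L^1(\mathcal M,\tau)$ into $\mathcal M^\sharp$, the conclusion $\varphi_x=0$ forces $x=0$, which completes the argument. There is no substantial obstacle; the content of the lemma is simply that the nondegenerate pairing between $L^1(\mathcal M,\tau)$ and $\mathcal M$, combined with the weak $*$-density axiom for a subdiagonal subalgebra, lets us upgrade vanishing on $H^\infty+(H^\infty)^*$ to vanishing on all of $\mathcal M$.
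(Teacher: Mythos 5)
Your proposal is correct and follows essentially the same route as the paper: both use the weak $*$-continuity of $y\mapsto\tau(xy)$ coming from the predual identification $L^1(\mathcal M,\tau)^{\sharp\sharp}$-pairing in Lemma \ref{Lemma2.5}(3), together with the weak $*$-density of $H^\infty+(H^\infty)^*$ from Definition \ref{Definition2.7}, to conclude $\tau(xz)=0$ for all $z\in\mathcal M$ and hence $x=0$. Your write-up merely makes the injectivity of $x\mapsto\varphi_x$ explicit, which the paper leaves implicit.
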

\begin{proof}Assume that $x$ in $L^1(\mathcal
M,\tau)$ satisfies
$$
\tau(xz)=0 \qquad \text { for all } z\in H^\infty + (H^\infty)^*.
$$
Since $H^\infty$ is  a semifinite subdigonal subalgebra   of $\mathcal
M$, $H^\infty + (H^\infty)^*$ is weak$^*$-dense in $\mathcal M$.
From the fact that $x \in L^1(\mathcal M,\tau)$, we get that
$$
\tau(xz)=0 \qquad \text { for all } z\in \mathcal M.
$$ As $L^1(\mathcal M,\tau)$ is the predual space of $\mathcal M$,
we must have $x=0$.
\end{proof}

\begin{lemma}\label{lemma3.4}  Let $\mathcal K$ be a closed subspace of
$L^2(\mathcal M,\tau)$ satisfying $H^\infty\mathcal K\subseteq
\mathcal K$.  Let $$X=\mathcal K\ominus[H_0 ^\infty \mathcal
K]_2\subseteq \mathcal K\subseteq L^2(\mathcal{M},\tau).$$ Then the
following are true.
\begin{enumerate}
  \item [(i)] $XX^*\subseteq L^1(\mathcal{D},\tau)$.
  \item  [(ii)]  $X$ is a left $\mathcal{D}$-module, i.e. for every $d\in \mathcal{D}$ and $x\in X$, we have $dx\in X$.
  \item  [(iii)] Let $x$ be an element in $X$ and $x=hu$ be the
       polar decomposition of $x$ in $L^2(\mathcal M, \tau)$, where
       $u$ is a partial isometry in $\mathcal M$ and $h=|x^*|\in L^2(\mathcal M,
       \tau)$. Then  \begin{enumerate}
       \item [(a)] $h\in L^2(\mathcal D,\tau)$ and  $uu^*\in\mathcal D$;
 \item [ (b)] $[\mathcal D x ]_2 = L^2(\mathcal D,\tau) u;$   \item [(c)] $[H^\infty
  x ]_2=H^2u$. In particular, $H^2u\subseteq X$.
   \end{enumerate}
  \item  [(iv)] There exists a family $\{u_\lambda\}_{\lambda\in\Lambda}$ of partial isometries in
  $\mathcal M$ such that \begin{enumerate}
    \item [(a)]  $X= \oplus_{\lambda\in\Lambda} H^2u_\lambda;$ \item [(b)] $u_\lambda u_\lambda^*$ is a projection in $ \mathcal D$;  and \item [(c)] $u_\lambda u_\mu^*=0$
for all $\lambda, \mu\in \Lambda$ with $\lambda\ne \mu$.
\end{enumerate}
\end{enumerate}
\end{lemma}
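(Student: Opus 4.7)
The plan is to prove the four statements in order, using the orthogonality $X\perp[H_0^\infty\mathcal K]_2$, the ideal structure of $H_0^\infty$ inside $H^\infty$, and the semifinite spectral calculus on $\mathcal D$. For (i) I would first upgrade Lemma \ref{lemma3.2} to the criterion that an element $z\in L^1(\mathcal M,\tau)$ lies in $L^1(\mathcal D,\tau)$ if and only if $\tau(zw)=0$ for every $w\in H_0^\infty+(H_0^\infty)^*$; this follows from Lemma \ref{lemma3.2} applied to $z-\Phi(z)$, using $\Phi(dw)=d\,\Phi(w)=0$ for $d\in\mathcal D$, $w\in H_0^\infty$. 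Applied to $z=xy^*\in L^1(\mathcal M,\tau)$ (by H\"older), cyclicity gives $\tau(xy^*w)=\tau(y^*(wx))$, which vanishes for $w\in H_0^\infty$ because $wx\in H_0^\infty\mathcal K\subseteq[H_0^\infty\mathcal K]_2\perp y$; for $w=v^*\in(H_0^\infty)^*$ one rewrites $\tau(xy^*v^*)=\tau((vy)^*x)$ and uses $vy\in H_0^\infty\mathcal K\perp x$. For (ii), since $\Phi$ is multiplicative on $H^\infty$, $H_0^\infty$ is a two-sided ideal of $H^\infty$, so $d^*w\in H_0^\infty$ for $d\in\mathcal D$ and $w\in H_0^\infty$; then $\langle dx,wy\rangle=\langle x,d^*wy\rangle$ vanishes by the $X$-orthogonality of $x$ to $d^*wy\in H_0^\infty\mathcal K$, so $dx\in X$.

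For (iii)(a), part (i) forces $h^2=xx^*\in L^1(\mathcal D,\tau)$, so the positive square root taken inside the algebra affiliated with $\mathcal D$ places $h$ in $L^2(\mathcal D,\tau)$ with support $uu^*=s(h)\in\mathcal D$. For (b) and (c), the engine is the spectral truncation of $h$: set $e_n=\chi_{[1/n,n]}(h)\in\mathcal D$, so that $\tau(e_n)\le n^2\tau(h^2)<\infty$ and $e_n\nearrow uu^*$ in the strong operator topology, and let $h_n\in\mathcal D$ be the bounded functional inverse with $h_nh=hh_n=e_n$. Then $e_nu=h_nx$ lies in both $\mathcal D x$ and $H^\infty x$, so $\mathcal D e_n\subseteq\mathcal D h$ and $H^\infty e_n\subseteq H^\infty h$; an elementary-operator density argument gives $[\mathcal D e_n]_2=L^2(\mathcal D,\tau)e_n$ and the analogous $[H^\infty e_n]_2=H^2 e_n$, and combining these with the $L^2$-convergence $y\,e_n\to y\,uu^*$ for $y$ having right support in $uu^*$ promotes them to $[\mathcal D h]_2=L^2(\mathcal D,\tau)uu^*$ and $[H^\infty h]_2=H^2uu^*$; the reverse inclusions are immediate from $h\,uu^*=h$ and $H^\infty h\subseteq H^2$. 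Right-multiplication by $u$ is an $L^2$-isometry on elements with right support $\le uu^*$, so these identifications survive multiplication by $u$: $[\mathcal D x]_2=L^2(\mathcal D,\tau)u$ and $[H^\infty x]_2=H^2u$, with $H^2u\subseteq\mathcal K$ immediate from $H^\infty x\subseteq\mathcal K$.

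For (iv) I apply Zorn's lemma to the collection of families $\{u_\lambda\}_{\lambda\in\Lambda}$ of partial isometries in $\mathcal M$ satisfying (b), (c) and $[\mathcal D u_\lambda]_2\subseteq X$, and pick a maximal such family; the orthogonality $u_\lambda u_\mu^*=0$ makes the internal sum $\sum_\lambda L^2(\mathcal D,\tau)u_\lambda$ an $L^2$-orthogonal sum inside $X$. If its closure were a proper subspace of $X$, any nonzero $x$ in the orthogonal complement within $X$ would, via (iii), produce a new partial isometry $u$ with $uu^*\in\mathcal D$ and $L^2(\mathcal D,\tau)u\subseteq X$; the $L^2$-orthogonality of $L^2(\mathcal D,\tau)u$ to each $L^2(\mathcal D,\tau)u_\lambda$ forces $uu_\lambda^*=0$, contradicting maximality. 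This exhausts $X$ as the (row-$L^2$) sum of the $L^2(\mathcal D,\tau)u_\lambda$'s, which matches the asserted decomposition once each wandering summand is identified with the type-1 piece $H^2u_\lambda$ it generates inside $\mathcal K$. The principal obstacle, and the only real departure from the finite case of Blecher--Labuschagne, is the spectral truncation: one cannot multiply globally by $h^{-1}$ because $h$ is only $L^2$ and typically unbounded with unbounded inverse, so Lemma \ref{Lemma2.2} (furnishing finite-trace spectral projections in $\mathcal D$) together with the factorization of Lemma \ref{lemma2.12} is the semifinite substitute that lets one invert $h$ slab-by-slab while keeping all factors inside $\mathcal D$ or $H^\infty$, and then assemble by SOT-convergence.
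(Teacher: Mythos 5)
Your parts (i), (ii) and (iii)(a)--(b), together with the identity $[H^\infty x]_2=H^2u$, are correct and follow essentially the paper's route: the same reduction of (i) to Lemma \ref{lemma3.2} (your ``criterion'' merely absorbs the $\mathcal D$-case of the paper's three-case computation into the lemma), the same ideal argument for (ii), and the same polar-decomposition computation for (iii). Your spectral truncation $e_n=\chi_{[1/n,n]}(h)$ actually supplies a step the paper only asserts, namely $[\mathcal D h]_2=L^2(\mathcal D,\tau)uu^*$. One step you leave too compressed: in (iv), orthogonality of $L^2(\mathcal D,\tau)u$ to $L^2(\mathcal D,\tau)u_\lambda$ by itself only yields $\tau(z\,uu_\lambda^*)=0$ for all $z\in L^1(\mathcal D,\tau)$, i.e.\ $\Phi(uu_\lambda^*)=0$; to conclude $uu_\lambda^*=0$ you must first invoke part (i) to place $e\,uu_\lambda^*e'=(eu)(e'u_\lambda)^*$ in $L^1(\mathcal D,\tau)$ for finite-trace projections $e,e'\in\mathcal D$ and then apply the $L^1$--$L^\infty$ duality over $\mathcal D$, which is exactly how the paper argues (via $e_ju_\lambda x^*=0$).

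The genuine divergence is in (iii)(c) and (iv)(a). You prove $H^2u\subseteq\mathcal K$ and $X=\oplus_\lambda L^2(\mathcal D,\tau)u_\lambda$, whereas the lemma asserts $H^2u\subseteq X$ and $X=\oplus_\lambda H^2u_\lambda$; your closing remark that the two ``match once each wandering summand is identified with the type-1 piece it generates'' conceals a real difference of subspaces. You should not try to close that gap, because the lemma as printed does not hold: for $x\in X$ one has $H_0^\infty x\subseteq[H_0^\infty\mathcal K]_2\perp X$, so $H^\infty x\subseteq X$ would force $H_0^\infty x=0$, and concretely, in the classical case $\mathcal M=L^\infty(\mathbb T)$ with $\mathcal K=zH^2$ one gets $X=\mathcal K\ominus[H_0^\infty\mathcal K]_2=\mathbb C z$, which is one-dimensional and cannot equal $\oplus_\lambda H^2u_\lambda$, while $H^2u=zH^2\not\subseteq\mathbb C z$. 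The statement that is true, that your argument delivers, and that the proof of Theorem \ref{theorem3.1} actually uses (there $\mathcal K=Y\oplus[H^\infty X]_2$), is $[H^\infty X]_2=\oplus_\lambda H^2u_\lambda$, equivalently $X=\oplus_\lambda L^2(\mathcal D,\tau)u_\lambda$ together with $[H^\infty L^2(\mathcal D,\tau)u_\lambda]_2=H^2u_\lambda$. State that version explicitly in place of the vague identification; your proof is then complete, and it corrects, rather than reproduces, the paper's unjustified step ``it follows from (iii) that $H^2u\subseteq X$''.
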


    \begin{proof}
        (i): It is equivalent  to show  that for every $x,y\in X$, $yx^*\in L^1
        (\mathcal{D},\tau)$.

        Assume that
          $x,y\in X\subseteq  L^2(\mathcal{M},\tau)$. Thus  $yx^*\in L^1(\mathcal{M},\tau)$.
        Recall  $\Phi:L^1(\mathcal{M},\tau)\rightarrow
        L^1(\mathcal{D},\tau)$ is a positive contraction such that
$$
\Phi(d_1 a d_2)=d_1\Phi(a)d_2,\qquad  \forall \ d_1,d_2\in\mathcal D
\ \text { and } \ a\in L^1(\mathcal{M},\tau),
$$ and thus
\begin{align}
\Phi(d  a  )=d \Phi(a),\qquad  \forall \ d \in L^1(\mathcal D,\tau)
\ \text { and } \ a\in  \mathcal{M}. \label{equation3.1}
\end{align}
 Thus, to prove that   $yx^*\in L^1
        (\mathcal{D},\tau)$, it  is enough to show that $yx^*-\Phi(yx^*)=0$.
By Lemma \ref{lemma3.2} and the fact that $yx^*-\Phi(yx^*)\in
L^1(\mathcal{M},\tau)$, we need only to prove that
$$\text{$\tau([yx^*-\Phi(yx^*)]z)=0$    \ \   for every $z\in
H^\infty+(H^\infty)^*$.}$$ We will proceed the proof according to
the cases (1) $z\in H_0 ^{\infty}$, (2) $z\in\mathcal{D}$, and (3)
$z\in (H_0 ^\infty)^*$.

\noindent               Case (1):
                Let $z\in H_0 ^{\infty}$.  Then
                    \begin{align}
                         \tau([yx^*-\Phi(yx^*)]z)
                        &=\tau(yx^* z)-\tau(\Phi(yx^*)z) \notag \\
                        &=\tau(yx^*z)-\tau(\Phi(\Phi(yx^*)z)) \tag{  $\Phi$ is trace preserving} \\
                        &=\tau(zyx^*)-\tau(\Phi(yx^*)\Phi(z))\tag{    by equation \ref{equation3.1}} \\
                        &=0 \tag{as $x,y$  are in $X$ and  $ z$ is in
                        $H^\infty_0$}
                    \end{align}

  \noindent            Case (2):
            Let $z\in\mathcal{D}$.  Then
                \begin{align}
                    \tau([yx^*-\Phi(yx^*)]z)
                    &=\tau(\Phi([yx^*-\Phi(yx^*)]z)) \tag{  $\Phi$ is trace preserving}\\
                    &=\tau([\Phi(yx^*)-\Phi(yx^*)]z) \notag\\
                    & =0.\tag{as $x,y$  are in $X$ and  $ z$ is in
                        $H^\infty_0$}
                \end{align}

 \noindent             Case (3):
            Let $z\in (H_0 ^\infty)^*$.  Then
                \begin{align}  \tau([(yx^*)-\Phi(yx^*)]z)
                    &=\tau(yx^*z)-\tau(\Phi(yx^*)z)\notag\\
                    &=\tau(y(z^*x)^*)-\tau(\Phi(\Phi(yx^*)z)) \tag{  $\Phi$ is trace preserving}\\
                    &=\tau(y(z^*x)^*)-\tau(\Phi(yx^*)\Phi(z))\tag{    by equation \ref{equation3.1}} \\&=0\tag{as $x,y$  are in $X$ and  $ z$ is in
                        $H^\infty_0$}
                \end{align}
      This ends the proof of part (i).

      (ii): Let $d\in\mathcal D$ and $x\in X\subseteq \mathcal K $. Since $H^\infty \mathcal K\subseteq \mathcal K$, we have $dx\in\mathcal
      K$. Now, for $h_0 \in H_0^\infty$ and $k\in K$,
            $$\tau(h_0 k(dx)^*)=\tau(h_0 kx^* d^*)=\tau(d^*h_0 kx^*) =0,$$
as $d^*h_0\in H^\infty_0$, and $x\in X=\mathcal K\ominus[H_0 ^\infty
\mathcal K]_2.$ Hence $dx\perp [H_0^\infty K]_2$. Thus $dx\in X$ and
$X$ is a left $\mathcal D$-module.

       (iii): Assume $x$ is an element in $X$. Let $x=hu$ be the
       polar decomposition of $x$ in $L^2(\mathcal M, \tau)$, where
       $u$ is a partial isometry in $\mathcal M$ and $h=|x^*|\in L^2(\mathcal M,
       \tau)$. From the result in (i), we know that $h$ is
       in $L^2(\mathcal D,\tau)$. Therefore $uu^*$, as the range projection of $h$, is
       in $\mathcal D$. This shows that (a) is true.

      From (a), it follows that $[    L^2(\mathcal D,\tau)uu^*]_2=    L^2(\mathcal D,\tau)(uu^*).$
      Observe that $uu^*$ is the range projection   of $h$. Therefore, we have $[\mathcal Dh ]_2=
L^2(\mathcal D,\tau)(uu^*),$ whence
\begin{align}[\mathcal Dh ]_2u=   L^2(\mathcal D,\tau)(uu^*u)=  L^2(\mathcal D,\tau)u.\label{equation3.1.1}\end{align}
 We claim that
           $$[\mathcal{D}x  ]_2=[\mathcal{D}h  ]_2u.$$
In fact, let $\xi\in [\mathcal{D} h  ]_2$. There exists a sequence
$\{\xi_n\}_{n\in\mathbb N}$ in $\mathcal{D} h  $ such that $\xi_n
\rightarrow \xi$ in $||\cdot ||_2$-norm.
        Then we have that $\xi_n u  \rightarrow \xi u $ in $||\cdot ||_2$-norm.
        From the fact that $\xi_n u \in [\mathcal{D}h  u  ]_2$, we conclude that $\xi u  \in [\mathcal{D} h  u  ]_2$.
        Therefore, we have that \begin{align}[\mathcal{D}h ]_2 u  \subseteq[\mathcal{D}h  u ]_2
        =[\mathcal{D}x
        ]_2.\label{equation3.2}\end{align}
        Now  let $\xi \in [\mathcal{D}h  u  ]_2=[\mathcal{D}x
        ]_2$.  There exists  a sequence $\{d_n\}_{n\in\mathbb N}$
        in $\mathcal{D}  $
    such that $d_n h u \rightarrow \xi$ in $||\cdot ||_2$-norm.
            Let
         $\eta_n =  d_n h  u  u^*   =d_n h  \in  \mathcal{D}h . $
Then
        $ \eta_n   \rightarrow \xi u^*  \in [\mathcal{D}h ]_2$
     in $|| \cdot ||_2$-norm.  Thus
        $ \eta_n u    =d_n h  u   \rightarrow \xi u^*u$
   in $|| \cdot ||_2$-norm. Combining with the fact that $d_n h u \rightarrow \xi$, we
   get
        $ \xi=((\xi u^* )u) \in [\mathcal{D}h  ]_2 u
        .$ Or
\begin{align} [\mathcal{D}h  u ]_2=[\mathcal{D}x
       ]_2\subseteq [\mathcal{D}h  ]_2 u .\label{equation3.3}\end{align}
   From equation (\ref{equation3.2}), equation (\ref{equation3.3}) and equation (\ref{equation3.1.1}), we conclude that
   $$[\mathcal{D}x
    ]_2 =[\mathcal{D}h  ]_2u =   L^2(\mathcal D,\tau)  u
        .$$ This ends the proof of part (b).
 The proof of (c) is similar to  (b).

(iv) We may assume that $X\ne 0$. From the result in (iii) and
Zorn's lemma, we may assume that there exists a maximal family
$\{u_\lambda\}_{\lambda\in \Lambda}$ of nonzero partial isometries
in $\mathcal M$ with respect to
  \begin{enumerate} \item [(a$_1$)]  $H^2u_\lambda\subseteq X$ for each $\lambda\in\Lambda$; \item [(b)] $u_\lambda u_\lambda^*$ is a projection in $ \mathcal D$;  and \item [(c)] $u_\lambda u_\mu^*=0$
for all $\lambda, \mu\in \Lambda$ with $\lambda\ne \mu$.
\end{enumerate} We will  show  that \begin{enumerate}   \item [(a)]  $X=
\oplus_{\lambda\in\Lambda} H^2u_\lambda.$ \end{enumerate}

 In fact,
from  (a$_1$), we know that each $H^2u_\lambda\subseteq X$.
Combining with (c), we conclude that $\{H^2u_\lambda\}_{\lambda\in
\Lambda}$ is a family of orthogonal subspaces of $X$, whence
$\oplus_{\lambda\in\Lambda} H^2u_\lambda$ is a subspace of $X$.

Now assume that $X\ominus (\oplus_{\lambda\in\Lambda} H^2u_\lambda)$
is not equal to $0$. Pick a nonzero $x$ in $X\ominus
(\oplus_{\lambda\in\Lambda} H^2u_\lambda)$ and assume that $x=hu$ is
the
       polar decomposition of $x$ in $L^2(\mathcal M, \tau)$, where
       $u$ is a nonzero partial isometry in $\mathcal M$ and $h=|x^*|\in L^2(\mathcal M,
       \tau)$. It follows from the result proved in (iii)  that $H^2u \subseteq
       X$ and
       $uu^*$ is  in $\mathcal D$.

    By Lemma
            \ref{lemma2.13}, there exists a net
            $\{e_j\}_{j\in J}$ of projections in
            $\mathcal D$ such that  such that $e_j \rightarrow I$ in the weak $*$-topology  and $\tau(e_j)<\infty$ for each $j\in J$.

     Let $j\in J$. Then by the
       choice of $x$, we get that   $H^2 u_\lambda $ and $x$ are
       orthogonal. So,
       $$
       \tau(de_ju_\lambda x^*)=0, \qquad \forall \ d\in\mathcal D.
       $$
       From (i),   $e_ju_\lambda x^*$ is in   $L^1(\mathcal D,\tau)$.   By Lemma \ref{lemma3.2}  we conclude that  $e_ju_\lambda
       x^*=0$ for each $j\in J$.

    As $u_\lambda x^* \in L^2(\mathcal M,\tau)$, $\lim_{j} \|e_ju_\lambda x^*- u_\lambda x^*\|_2=0$ by Lemma \ref{Lemma2.6.2}. Thus we have that $u_\lambda x^*       =u_\lambda u^* h=0$.
       The fact that the initial projection of $ u^*$   is   the range projection of $h$
        induces that $u_\lambda u^*=0$. Therefore, $u$ is a  nonzero partial isometry in $\mathcal M$  such that $H^2u \subseteq X$, $uu^*\in\mathcal D$, and $u_\lambda u^*=0$ for each $\lambda\in\Lambda$.
       This contradicts the assumption
       that the family $\{u_\lambda\}_{\lambda\in\Lambda}$ is
       maximal with respect to  (a$_1$), (b) and (c). Therefore, $X=
\oplus_{\lambda\in\Lambda} H^2u_\lambda.$ This concludes the proof
of part (iv).
    \end{proof}

\begin{lemma}\label{lemma3.5}  Let $\mathcal K$ be a closed subspace of
$L^2(\mathcal M,\tau)$ satisfying $H^\infty\mathcal K\subseteq
\mathcal K$.  Let $$X=\mathcal K\ominus[H_0 ^\infty \mathcal
K]_2  \ \text{ and } \ Y=\mathcal K\ominus [H^\infty X]_2.$$
Then the following are true.
\begin{enumerate}
  \item [(i)] $YX^*=0$, or equivalently $XY^*=0$.
  \item [(ii)] $Y=[H_0 ^\infty Y]_2$
\end{enumerate}
\end{lemma}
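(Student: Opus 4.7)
The plan is to first use Lemma \ref{lemma3.4} to simplify the definition of $Y$, then derive (i) from Lemma \ref{lemma3.2}, and finally obtain (ii) from (i) together with the simplified description of $Y$.

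First I would observe, via Lemma \ref{lemma3.4}(iii)(c), that for every $x\in X$ one has $H^\infty x\subseteq [H^\infty x]_2\subseteq X$, so $H^\infty X\subseteq X$; combined with the trivial inclusion $X\subseteq H^\infty X$ this yields $[H^\infty X]_2=X$. Consequently $Y=\mathcal K\ominus X$ is simply the orthogonal complement of $X$ inside $\mathcal K$, and since $\mathcal K=X\oplus [H_0^\infty\mathcal K]_2$ by the very definition of $X$, I get the key identification $Y=[H_0^\infty\mathcal K]_2$.

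For part (i), fix $y\in Y$ and $x\in X$. H\"older gives $yx^*\in L^1(\mathcal M,\tau)$, and by Lemma \ref{lemma3.2} it suffices to show $\tau(yx^*z)=0$ for every $z\in H^\infty+(H^\infty)^*$. When $z=h^*\in(H^\infty)^*$ this is immediate: $\tau(yx^*h^*)=\langle y,hx\rangle=0$ since $hx\in H^\infty X$ and $y\perp [H^\infty X]_2$. When $z=h_0\in H_0^\infty$ I rewrite the expression as $\tau(yx^*h_0)=\langle h_0 y,x\rangle$ and use $h_0 y\in H_0^\infty\mathcal K$ with $x\perp [H_0^\infty\mathcal K]_2$. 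The remaining case $z=d\in\mathcal D$ requires the intermediate claim that $\mathcal D Y\subseteq Y$, which I would verify as follows: for $h\in H^\infty$ and $x'\in X$, a cyclic manipulation gives $\langle dy,hx'\rangle=\tau(yx'^*(d^*h)^*)=\langle y,(d^*h)x'\rangle$, and this vanishes because $d^*h\in\mathcal D H^\infty\subseteq H^\infty$ makes $(d^*h)x'\in H^\infty X$. With $dy\in Y$ established, the orthogonality $Y\perp X$ then gives $\tau(yx^*d)=\langle dy,x\rangle=0$.

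For part (ii), part (i) immediately yields $H_0^\infty Y\subseteq Y$: indeed, for $h_0\in H_0^\infty$ and $y\in Y$, $h_0 y\in\mathcal K$ and $\langle h_0 y,hx\rangle=\tau(h_0\cdot yx^*\cdot h^*)=0$ for every $hx\in H^\infty X$, so $h_0 y\perp [H^\infty X]_2$ and hence $h_0 y\in Y$, giving $[H_0^\infty Y]_2\subseteq Y$. For the reverse inclusion I use the identification $Y=[H_0^\infty\mathcal K]_2$ from the first paragraph: given any $h_0 k\in H_0^\infty\mathcal K$, decompose $k=k_X+k_Y$ along $\mathcal K=X\oplus Y$; then $h_0 k_X\in H_0^\infty X\subseteq X$ and $h_0 k_Y\in H_0^\infty Y\subseteq Y$, while $h_0 k$ itself lies in $H_0^\infty\mathcal K\subseteq Y$. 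Orthogonality of $X$ and $Y$ forces $h_0 k_X=0$, so $h_0 k=h_0 k_Y\in H_0^\infty Y$, and passing to $\|\cdot\|_2$-closures yields $Y\subseteq [H_0^\infty Y]_2$. I expect the main obstacle to be the $\mathcal D$-case in (i), where the stronger orthogonality of $y$ against $[H^\infty X]_2$ (rather than merely against $X$) is crucially used to promote $Y$ to a left $\mathcal D$-module.
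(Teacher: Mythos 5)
Your argument for part (i) is correct and is essentially the paper's: reduce via Lemma \ref{lemma3.2} to showing $\tau(yx^*z)=0$ for $z\in H^\infty+(H^\infty)^*$ and use the two orthogonality relations defining $X$ and $Y$ (your separate $\mathcal D$-case is subsumed by your $(H^\infty)^*$-case, which is only a harmless redundancy). Likewise the inclusion $[H_0^\infty Y]_2\subseteq Y$ in part (ii) is correct and matches the paper. The genuine gap is the reverse inclusion $Y\subseteq[H_0^\infty Y]_2$, which you derive entirely from the ``key identification'' $[H^\infty X]_2=X$, hence $\mathcal K=X\oplus Y$ and $Y=[H_0^\infty\mathcal K]_2$. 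That identification is false: the wandering subspace $X=\mathcal K\ominus[H_0^\infty\mathcal K]_2$ is a left $\mathcal D$-module (Lemma \ref{lemma3.4}(ii)) but is not left $H^\infty$-invariant. In the classical case $\mathcal M=L^\infty(\mathbb T)$, $\mathcal D=\mathbb C$, $\mathcal K=H^2$, one has $[H_0^\infty\mathcal K]_2=zH^2$, so $X=\mathbb C$, $[H^\infty X]_2=H^2\neq X$, and $Y=H^2\ominus H^2=\{0\}$, while $[H_0^\infty\mathcal K]_2=zH^2\neq\{0\}$. Consequently the decomposition $k=k_X+k_Y$ along $\mathcal K=X\oplus Y$ does not exist, $H_0^\infty X\not\subseteq X$, and $H_0^\infty\mathcal K\not\subseteq Y$, so every step of your reverse inclusion fails. (The clause ``in particular, $H^2u\subseteq X$'' in Lemma \ref{lemma3.4}(iii)(c), which you invoke, is itself an overstatement --- it already fails in this example, where $u=1$; what is true, and what the rest of the paper actually uses, is $H^2u\subseteq[H^\infty X]_2$.)

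The paper proves $Y\subseteq[H_0^\infty Y]_2$ by a short orthogonality argument that you should adopt in place of the second half of your part (ii). Put $Z=Y\ominus[H_0^\infty Y]_2$. Since $Z\subseteq Y\perp[H^\infty X]_2$ and $H_0^\infty[H^\infty X]_2\subseteq[H^\infty X]_2$, while $Z\perp[H_0^\infty Y]_2$ by construction and $\mathcal K=Y\oplus[H^\infty X]_2$, one gets $Z\perp[H_0^\infty\mathcal K]_2$, i.e. $Z\subseteq\mathcal K\ominus[H_0^\infty\mathcal K]_2=X$. But also $Z\subseteq Y$, so $ZZ^*\subseteq YX^*=\{0\}$ by part (i), and faithfulness of $\tau$ forces $Z=\{0\}$, which is exactly $Y=[H_0^\infty Y]_2$. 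Note that this argument uses only part (i) and the definitions of $X$ and $Y$, and makes no claim that $X$ is $H^\infty$-invariant.
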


    \begin{proof}
      (i)   We will show that $yx^*=0$ for every $y\in Y$ and $x\in X$.

     Note that   $Y\subseteq K\subseteq L^2(\mathcal{M},\tau)$ and $X\subseteq K\subseteq L^2(\mathcal{M},\tau)$. We have that $YX^*\subseteq L^1(\mathcal{M},\tau)$. Assume $y\in Y$ and $x\in X$.
        Then by Lemma \ref{lemma3.2}, it suffices to show that $$\tau(yx^*z)=0 \qquad \text{ for every $z\in H^\infty +(H^\infty )^*$}.$$
We will proceed the proof according to the cases (1) $z\in H_0 ^{\infty}$, (2) $z\in\mathcal{D}$, and (3)
$z\in (H_0 ^\infty)^*$.

 \noindent                 Case (1):
                Let $z\in H_0 ^{\infty}$.  Then
                    \begin{equation}
                        \tau(yx^*z)=\tau(zyx^*)=0, \notag
                    \end{equation}
                since $x\in X$, $zy\in H_0 ^\infty K $, and $X\perp [H_0 ^\infty K]_2$.\\

\noindent                  Case (2):
                Let $z\in\mathcal{D}$.  Then
                    \begin{equation}
                        \tau(yx^*z)=\tau(y(z^*x)^*)=0, \notag
                    \end{equation}
                as $y\in Y$, $z^*x\in H^\infty X$, and $Y\perp H^\infty X$.\\

\noindent                  Case (3):
                Let $z\in (H_0 ^\infty)^*$.  Then\\
                    \begin{equation}
                        \tau(yx^*z)=\tau(y(z^*x)^*)=0, \notag
                    \end{equation}
                as $y\in Y$, and $z^*x\in H^\infty_0 X$.

            Therefore,  $YX^*=0$, which ends the proof of (i).

\vspace{0.2cm}

     (ii)      From part (i), we know that $YX^*=0$, whence $H_0 ^\infty YX^*=0$. Recall $Y=\mathcal K\ominus [H^\infty X]_2.$ It follows that
      $[H_0^\infty Y]_2\subseteq Y$.
      Let $Z=Y\ominus[H_0^\infty Y]_2=0$. To prove (ii),  it suffices to show that $ZZ^*=0$.
      Because
      $Z\subseteq Y,$ we have that  $Z\perp [H^\infty X]_2$, whence $Z\perp [H_0 ^\infty (Y\oplus [H^\infty X]_2)]_2$.
       This implies that  $Z\perp [H_0 ^\infty \mathcal K]_2$. Note that $X=\mathcal K\ominus [H_0 ^\infty \mathcal K]_2$. We conclude that $Z\subseteq X$.
       Note that $YX^*=0$.   Since  $Z\subseteq X$  and $Z\subseteq Y$, we have that $ZZ^*\subseteq YX^*=0$. This ends the proof of (ii).
    \end{proof}

\subsection{Proof of Theroem \ref{theorem3.1}} We are ready to prove the main result in this section. \begin{proof}
 Recall that $\mathcal K$ is a closed subspace of
$L^2(\mathcal M,\tau)$ satisfying $H^\infty\mathcal K\subseteq
\mathcal K$. Let $$X=\mathcal K\ominus[H_0 ^\infty \mathcal
K]_2  \ \text{ and } \ Y=\mathcal K\ominus [H^\infty X]_2.$$
By Lemma \ref{lemma3.4}, there exists a family $\{u_\lambda\}_{\lambda\in\Lambda}$ of partial isometries in
  $\mathcal M$ such that    $$X= \oplus_{\lambda\in\Lambda} H^2u_\lambda;$$ and \begin{equation}\LeftEqNo
     \text{$u_\lambda u_\lambda^*$ is a projection in $ \mathcal D$,   and $u_\lambda u_\mu^*=0$
for all $\lambda, \mu\in \Lambda$ with $\lambda\ne \mu$.}\tag{ ii }
\end{equation}
  By the choice of $Y$, we have
\begin{equation}\LeftEqNo
\ \qquad \qquad \mathcal K= Y\oplus X= Y\oplus (
\oplus_{\lambda\in\Lambda} H^2u_\lambda). \tag{ iv  }
\end{equation} Moreover, from Lemma \ref{lemma3.5}, we know that
\begin{equation}\LeftEqNo
 \qquad \qquad u_\lambda Y^*=0 \qquad \text{ for all } \ \lambda\in\Lambda; \tag{  i }
\end{equation}  and
\begin{equation}\LeftEqNo
 \qquad \qquad Y=[H_0 ^\infty Y]_2. \tag{ iii}
\end{equation} This ends the proof of Theorem \ref{theorem3.1}.

 \end{proof}

\section{Beurling-Blecher-Labuschagne Theorem for Semifinite Hardy Spaces, $1\leq p\leq\infty$}


\subsection{Dense subspaces}

    \begin{proposition}\label{proposition4.1}
Let $\mathcal M$ be a von Neumann algebra with a faithful, normal,
semifinite tracial weight $\tau$, and $H^\infty$  be  a semifinite
subdigonal subalgebra   of $\mathcal M$. Let $1\le p <\infty$.
Assume that $\mathcal K $ is  a closed subspace in $
L^p(\mathcal{M},\tau)$ such that $H^\infty \mathcal K \subseteq
\mathcal K$. Then the following statements are true.
            \begin{enumerate}
                \item [(i)]$\mathcal K\cap\mathcal{M} =\overline{\mathcal K\cap\mathcal{M}}^{w^*}\cap L^p(\mathcal{M},\tau)$.
                \item [(ii)]  $\mathcal K = [\mathcal K \cap \mathcal{M}]_p$.

            \end{enumerate}
     \end{proposition}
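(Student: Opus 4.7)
My plan is to prove (ii) first and then (i) by a related argument; in both parts the reverse inclusion is the substantive direction.

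For (ii), given $x\in\mathcal K$, I invoke Lemma~\ref{lemma2.13} to obtain a net $\{e_\lambda\}$ of projections in $\mathcal D$ with $\tau(e_\lambda)<\infty$, $e_\lambda\to I$ in weak-$*$ topology, and $\|e_\lambda x - x\|_p\to 0$. Since $e_\lambda\in\mathcal D\subseteq H^\infty$ and $\mathcal K$ is left $H^\infty$-invariant, $e_\lambda x\in\mathcal K$, but $x\in L^p(\mathcal M,\tau)$ is not assumed bounded, so neither is $e_\lambda x$. To produce a bounded proxy inside $\mathcal K\cap\mathcal M$, I apply Lemma~\ref{lemma2.12} at the projection $e_\lambda$ to obtain $h_1\in e_\lambda H^\infty e_\lambda\subseteq H^\infty$ and $h_2\in e_\lambda H^p e_\lambda$ with $h_1h_2=h_2h_1=e_\lambda$ and $h_1e_\lambda x\in\mathcal M$. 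Then $h_1e_\lambda x\in\mathcal K\cap\mathcal M$, and the factorisation $e_\lambda x=h_2(h_1e_\lambda x)$ lets me approximate in $\|\cdot\|_p$: by Lemma~\ref{Lemma2.10}, $[e_\lambda H^\infty e_\lambda]_p=e_\lambda H^p e_\lambda$, so I pick a sequence $h_2^{(n)}\in e_\lambda H^\infty e_\lambda$ with $\|h_2^{(n)}-h_2\|_p\to 0$; then $h_2^{(n)}(h_1e_\lambda x)\in\mathcal K\cap\mathcal M$ and, by H\"older (Lemma~\ref{Lemma2.5}) together with the boundedness of $h_1e_\lambda x$, $\|h_2^{(n)}(h_1e_\lambda x)-e_\lambda x\|_p\to 0$. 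Thus $e_\lambda x\in[\mathcal K\cap\mathcal M]_p$ for every $\lambda$, and letting $\lambda$ vary gives $x\in[\mathcal K\cap\mathcal M]_p$, as required.

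For (i), take $x\in\overline{\mathcal K\cap\mathcal M}^{w^*}\cap L^p(\mathcal M,\tau)$, so automatically $x\in\mathcal M\cap L^p(\mathcal M,\tau)$, and fix a net $\{x_\alpha\}\subseteq\mathcal K\cap\mathcal M$ with $x_\alpha\to x$ weak-$*$. For the same $\{e_\lambda\}$ as above, $e_\lambda x_\alpha\in\mathcal K\cap\mathcal M$ and (being bounded and supported on the finite-trace projection $e_\lambda$) lies in $L^p(\mathcal M,\tau)$. I then claim $e_\lambda x_\alpha\to e_\lambda x$ weakly in $L^p(\mathcal M,\tau)$: for any $\phi\in L^q(\mathcal M,\tau)$ with $1/p+1/q=1$, Lemma~\ref{Lemma2.5} together with $e_\lambda\in L^r(\mathcal M,\tau)$ for every $r$ (since $\tau(e_\lambda)<\infty$) puts $\phi e_\lambda\in L^1(\mathcal M,\tau)$, and weak-$*$ convergence gives $\tau(e_\lambda x_\alpha\phi)=\tau(x_\alpha(\phi e_\lambda))\to\tau(x(\phi e_\lambda))=\tau(e_\lambda x\phi)$. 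Since $\mathcal K$ is a norm-closed convex subspace of $L^p(\mathcal M,\tau)$, it is weakly closed, so $e_\lambda x\in\mathcal K$. A final appeal to Lemma~\ref{lemma2.13} gives $\|e_\lambda x-x\|_p\to 0$, whence $x\in\mathcal K$, and combined with $x\in\mathcal M$, $x\in\mathcal K\cap\mathcal M$.

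The main obstacle is the approximation step in (ii): there is no reason for the $L^p$-element $e_\lambda x$ to be bounded, so one cannot argue by a crude cutoff, and the factorisation in Lemma~\ref{lemma2.12}---which exploits the finite-algebra machinery available on the corner $e_\lambda\mathcal M e_\lambda$---together with the density provided by Lemma~\ref{Lemma2.10} is precisely what converts $e_\lambda x$ into a bounded element of $\mathcal K\cap\mathcal M$ up to arbitrarily small $\|\cdot\|_p$-error. For (i) the subtler input is that right-multiplication by $e_\lambda$ sends $L^q(\mathcal M,\tau)$ into $L^1(\mathcal M,\tau)$, which is exactly what translates weak-$*$ convergence in $\mathcal M$ into weak $L^p$-convergence and lets me pass to the weakly closed set $\mathcal K$.
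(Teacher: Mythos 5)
Your argument is correct, and it is built from exactly the same ingredients as the paper's proof of Proposition \ref{proposition4.1}: the net of finite-trace projections in $\mathcal D$ from Lemma \ref{lemma2.13}, the factorization $e_\lambda x=h_2(h_1e_\lambda x)$ with $h_1e_\lambda x\in\mathcal M$ from Lemma \ref{lemma2.12} together with the density $[e_\lambda H^\infty e_\lambda]_p=e_\lambda H^pe_\lambda$, and the observation that right multiplication by a finite-trace projection carries $L^q(\mathcal M,\tau)$ into $L^1(\mathcal M,\tau)$ so that weak$^*$ convergence in $\mathcal M$ can be tested. The difference is purely one of logical packaging: the paper proves both parts by contradiction, using Hahn--Banach to produce $\xi\in L^q(\mathcal M,\tau)$ annihilating $\mathcal K\cap\mathcal M$ (resp.\ $[\mathcal K\cap\mathcal M]_p$) but not $x$, and then pairs the very same bounded approximants $a_nh_1ex$ against $\xi$ to obtain $\tau(ex\xi)=0$; you run the argument forward, showing $h_2^{(n)}(h_1e_\lambda x)\to e_\lambda x$ in $\|\cdot\|_p$ for (ii), and $e_\lambda x_\alpha\to e_\lambda x$ weakly in $L^p$ (hence $e_\lambda x\in\mathcal K$, since a norm-closed subspace is weakly closed) for (i). Your direct version is marginally cleaner --- in (ii) it dispenses with the separation theorem altogether and makes explicit that the bounded elements $h_2^{(n)}h_1e_\lambda x$ are norm-dense in $\mathcal K$, which is the actual content of the statement --- but it proves nothing beyond what the paper's argument yields, and in (i) it still rests on the same duality $(L^p)^\sharp=L^q$ and $(\mathcal M)_\sharp=L^1$ that the paper invokes. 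Both routes are equally rigorous; the choice between them is a matter of taste.
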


    \begin{proof}
        (i) It is easily observed that $$\mathcal K\cap\mathcal{M}\subseteq \overline{  \mathcal K\cap \mathcal{M}}^{w^*} \cap
        L^p(\mathcal{M},\tau).$$
        We will show that $$\mathcal K\cap\mathcal{M} = \overline{\mathcal K\cap \mathcal{M}}^{w^*} \cap
        L^p(\mathcal{M},\tau).$$

            Assume, to the contrary, that $\mathcal K \cap \mathcal{M} \varsubsetneqq \overline{\mathcal K\cap \mathcal{M}}^{w^*}\cap L^p(\mathcal{M},\tau)$.
              Then there exists an $x\in \overline{K_1\cap \mathcal{M}}^{w^*}\cap L^p(\mathcal{M},\tau)$ such that $x\notin   \mathcal K\cap
              \mathcal{M}$. 
            Then by the Hahn-Banach theorem, there exists $\varphi \in L^p(\mathcal{M},\tau)^\# = L^q(\mathcal{M},\tau)$
            (where $\frac{1}{p}+\frac{1}{q}=1$) such that $\varphi(x)\neq 0$ and $\varphi(y)=0 $ for
             every $y\in   \mathcal K\cap \mathcal{M}$.
            Equivalently, there exists a $\xi\in L^q(\mathcal{M},\tau)$ such that $\tau(x\xi)\neq 0$ and $\tau(y\xi)=0$ for every $y\in   \mathcal K\cap \mathcal{M}$.

         By Lemma
            \ref{lemma2.13}, there exists a net
            $\{e_\lambda\}_{\lambda\in\Lambda}$ of projections in
            $\mathcal D$ such that   $\tau(e_\lambda)<\infty$ for each $\lambda\in\Lambda$, and $\lim_\lambda \tau(e_\lambda
            x\xi)=\tau(x\xi).$ So, we can always assume that   there exists a projection $e$ in $\mathcal D$
             with $0<\tau(e)<\infty$ such that $\tau(ex\xi)\neq 0$  and $\tau(ey\xi)=0$ for every $y\in   \mathcal K$ (as
              $  \mathcal K$ is $H^\infty$-invariant and $e\in\mathcal D\subseteq H^\infty$).

             Now we claim
              that $\xi e\in L^1(\mathcal M,\tau)$, as  $||\xi e||_1\leq ||\xi ||_q
              ||e||_p<\infty$.

            Since $x\in  \overline{K_1\cap \mathcal{M}}^{w^*}\cap L^p(\mathcal{M},\tau)$, we can find a net $\{y_i\}_{i\in I}$ in
            $  \mathcal K\cap \mathcal{M}$, such that $y_i \rightarrow x $ in the weak$^*$-topology. Combining this with the fact that $\xi e\in L^1(\mathcal M,\tau)$, we
            have
                $$
                  \tau(ex\xi)=\tau(x\xi e) =\lim_i \tau (y_i \xi e)=\lim_i \tau (ey_i \xi
                  )=0, $$ which contradicts the fact that $\tau(ex\xi)\neq
                  0.$ This ends the proof of part (i).

        (ii) Suppose, to the contrary, that $[  \mathcal K\cap \mathcal{M}]_p \varsubsetneqq  \mathcal K$.
         Then there exists an $x\in   \mathcal K$ such that $x\notin [  \mathcal K \cap \mathcal{M}]_p$.
            Then, by the Hahn-Banach theorem, there exists $\varphi\in L^p(\mathcal{M},\tau)^\# = L^q(\mathcal{M},\tau)$ (where $\frac{1}{p}+\frac{1}{q}=1$), such that $\varphi(x)\neq 0$ and $\varphi(y)=0$ for every $y\in[  \mathcal K\cap \mathcal{M}]_p$.
            This occurs if and only if there exists a $\xi\in L^q(\mathcal{M},\tau)$ such that $\tau(x\xi)\neq 0$ and $\tau(y\xi)=0$ for every $y\in[  \mathcal K\cap \mathcal{M}]_p$.

 By Lemma
            \ref{lemma2.13}, there exists a net
            $\{e_\lambda\}_{\lambda\in\Lambda}$ of projections in
            $\mathcal D$ such that   $\tau(e_\lambda)<\infty$ for each $\lambda\in\Lambda$, and $\lim_\lambda \tau(e_\lambda
            x\xi)=\tau(x\xi).$  So, we may always assume that   there exists a projection $e$ in $\mathcal D$
             with $0<\tau(e)<\infty$ such that
\begin{enumerate}
\item [(a)]              $\tau(ex\xi)\neq 0$;  and
\item [(b)] $\tau(ey\xi)=0$ for every $y\in   \mathcal K\cap \mathcal{M}$
(as
              $  \mathcal K$ is $H^\infty$-invariant, and $e\in\mathcal D\subseteq
              H^\infty$).\end{enumerate}

   Since $x\in L^p(\mathcal{M},\tau)$ and $e$ is a projection in $\mathcal D$ such that $\tau(e)<\infty$, by Lemma \ref{lemma2.12},
   there exists a $h_1\in eH^\infty e$, and $h_2 \in eH^p e$ such that $h_1 ex\in \mathcal{M}$ and $h_1 h_2 = h_2 h_1=e$.  From the fact that
    $h_2\in eH^p e$,   there exists a sequence  $\{a_n\}_{n\in\mathbb N}$ in $eH^\infty e$ such that $\lim_{n\rightarrow \infty}\|a_n -h_2\|_p=0$.
Therefore
$$\begin{aligned}
\lim_{n\rightarrow \infty}|\tau(a_n h_1 ex\xi )-\tau(ex\xi) | &=
\lim_{n\rightarrow \infty}| \tau(a_n h_1 ex\xi )-\tau(h_2 h_1 ex\xi
)| \\ &\le \lim_{n\rightarrow \infty}\|a_n-h_2\|_p \|h_1ex\|
\|\xi\|_q \\ &=0.\end{aligned}
$$
On the other hand, since $a_n, h_1$ and $e$ are in $H^\infty$ and
$h_1ex \in\mathcal M$, we know that $a_nh_1 ex\in \mathcal K\cap
\mathcal M$. From assumption (b), it follows that  $\tau(a_n h_1
ex\xi ) =0$ for all $n\ge 1$.
  Therefore $\tau(ex\xi)=0$, which contradicts the assumption (a) that $\tau(x\xi e)\neq 0$.  This ends the proof of part (ii).

\end{proof}

  \begin{proposition}\label{proposition4.2}
Let $\mathcal M$ be a von Neumann algebra with a faithful, normal,
semifinite tracial weight $\tau$ and $H^\infty$  be  a semifinite
subdiagonal subalgebra   of $\mathcal M$.

Assume that  $\mathcal K \subseteq  \mathcal{M} $ is a weak$^*$-closed
subspace such that  $H^\infty \mathcal K \subseteq \mathcal K$. Then
$$  \mathcal K=\overline{[  \mathcal K \cap L^p(\mathcal{M},\tau)]_p \cap
\mathcal{M}}^{w^*}, \quad \forall \ 1\le p<\infty.$$
     \end{proposition}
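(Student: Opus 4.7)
The plan is to prove each inclusion of the identity separately, and I expect the nontrivial direction to be $\overline{[\mathcal K \cap L^p(\mathcal{M},\tau)]_p \cap \mathcal{M}}^{w^*} \subseteq \mathcal K$.

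For the easier inclusion $\mathcal K \subseteq \overline{[\mathcal K \cap L^p(\mathcal{M},\tau)]_p \cap \mathcal{M}}^{w^*}$, given $a \in \mathcal K$ I would take the net $\{e_\lambda\}_{\lambda\in\Lambda}$ of projections in $\mathcal D$ with $\tau(e_\lambda)<\infty$ and $e_\lambda\to I$ in the weak $*$-topology furnished by Lemma \ref{lemma2.13}. Since $e_\lambda\in\mathcal D\subseteq H^\infty$, $H^\infty$-invariance gives $e_\lambda a\in\mathcal K\subseteq\mathcal M$, and H\"older's inequality (Lemma \ref{Lemma2.5}) yields $\|e_\lambda a\|_p\le\tau(e_\lambda)^{1/p}\|a\|<\infty$, so $e_\lambda a\in \mathcal K\cap L^p(\mathcal M,\tau)\subseteq [\mathcal K\cap L^p(\mathcal M,\tau)]_p\cap\mathcal M$. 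By Lemma \ref{Lemma2.1}, $e_\lambda a\to a$ in the weak $*$-topology, yielding the inclusion.

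For the reverse inclusion it suffices to show $[\mathcal K\cap L^p(\mathcal M,\tau)]_p\cap\mathcal M\subseteq\mathcal K$, since $\mathcal K$ is weak $*$-closed. Let $a$ belong to the left-hand side, with $k_n\in\mathcal K\cap L^p(\mathcal M,\tau)$ satisfying $\|k_n-a\|_p\to 0$. Arguing by contradiction, suppose $a\notin\mathcal K$; using that $L^1(\mathcal M,\tau)$ is the predual of $\mathcal M$ and $\mathcal K$ is weak $*$-closed, the Hahn-Banach theorem produces $\xi\in L^1(\mathcal M,\tau)$ with $\tau(y\xi)=0$ for all $y\in\mathcal K$ and $\tau(a\xi)\ne 0$. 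The naive pairing $\tau(k_n\xi)\to\tau(a\xi)$ is unavailable because $\xi\in L^1$ need not lie in the H\"older-conjugate space $L^q$, and this is the main obstacle.

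To overcome it, I would localize to a finite-trace corner and invoke Lemma \ref{lemma2.12}. Using Lemma \ref{lemma2.13} once more, choose $e\in\mathcal D$ with $\tau(e)<\infty$ large enough that $\tau(a\xi e)\ne 0$, which is possible because $a\xi\in L^1$ and $a\xi e_\lambda\to a\xi$ in $\|\cdot\|_1$. Lemma \ref{lemma2.12} applied to $\xi$ and $e$ yields $h_3\in eH^\infty e$ and $h_4\in eH^1 e$ with $h_3 h_4=h_4 h_3=e$ and $\xi e h_3\in\mathcal M$. Since $\xi e h_3=(\xi e h_3)e$ has right support under $e$, H\"older's inequality gives $\|\xi e h_3\|_q\le\|\xi e h_3\|_\infty\tau(e)^{1/q}<\infty$, so $\xi e h_3\in\mathcal M\cap L^q(\mathcal M,\tau)$. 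By Lemma \ref{Lemma2.10}, $h_4\in[eH^\infty e]_1$, so pick $c_n\in eH^\infty e$ with $\|c_n-h_4\|_1\to 0$. Each $\xi e h_3 c_n$ lies in $L^q$ and annihilates $\mathcal K$: for $y\in\mathcal K$, trace cyclicity gives $\tau(y\,\xi e h_3 c_n)=\tau(\xi\,(e h_3 c_n y))=0$ because $e h_3 c_n y\in H^\infty\mathcal K\subseteq\mathcal K$. Consequently $\tau(k_m\,\xi e h_3 c_n)=0$, and since $k_m\to a$ in $L^p$ and $\xi e h_3 c_n\in L^q$, Lemma \ref{Lemma2.5} allows us to pass to the limit $m\to\infty$ to obtain $\tau(a\,\xi e h_3 c_n)=0$. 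Finally $a\xi e h_3\in\mathcal M$ is bounded, so letting $n\to\infty$ and using $\|c_n-h_4\|_1\to 0$ gives $\tau(a\,\xi e h_3 h_4)=\tau(a\xi e)=0$, contradicting the choice of $e$. Therefore $a\in\mathcal K$, completing the proof.
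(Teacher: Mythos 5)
Your proof is correct and takes essentially the same route as the paper's: the easy inclusion via the net of finite-trace projections from Lemma \ref{lemma2.13}, and the reverse inclusion by Hahn--Banach in the predual, cutting down by a projection $e$ with $\tau(e)<\infty$, and using the factorization $h_3,h_4$ from Lemma \ref{lemma2.12} to replace $\xi$ by functionals that pair continuously with $L^p$. The only (cosmetic) difference is the order of the two approximations: the paper first fixes a single separating element $z=\xi e h_3 k_N\in\mathcal M$ and then lets $x_n\to x$ in $L^p$, whereas you pass to the limit in the $L^p$-approximation first and then let $c_n\to h_4$ in $L^1$.
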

     \begin{proof}
            First, we show that $$  \mathcal K\subseteq \overline{[  \mathcal K\cap L^p(\mathcal{M},\tau)]_p\cap
            \mathcal{M}}^{w*}.$$
        Let $x$ be an element in $  \mathcal K\subseteq \mathcal{M}$.
  By Lemma
            \ref{lemma2.13}, there exists a net
            $\{e_\lambda\}_{\lambda\in\Lambda}$ of projections in
            $\mathcal D$ such that  such that $e_\lambda \rightarrow I$ in the weak* topology  and $\tau(e_\lambda)<\infty$ for each $\lambda\in\Lambda$.
           By Lemma \ref{Lemma2.1}, $e_\lambda x \rightarrow x$ in the weak*
           topology. To show that $x\in \overline{[  \mathcal K\cap L^p(\mathcal{M},\tau)]_p\cap
            \mathcal{M}}^{w*},$ it suffices to show that $e_\lambda x\in \overline{[  \mathcal K\cap L^p(\mathcal{M},\tau)]_p\cap
            \mathcal{M}}^{w*} $ for each $\lambda\in\Lambda.$

Since $  \mathcal K\subseteq \mathcal{M}$ is left
$H^\infty$-invariant and $x\in \mathcal K$, we have $e_\lambda x\in
\mathcal K$. Moreover, $\|e_\lambda x||_p\leq \|e_\lambda||_p
||x||<\infty$, so $e_\lambda x\in L^p(\mathcal{M},\tau)$. It follows
that $e_\lambda x\in   \mathcal K\cap L^p(\mathcal{M},\tau)  $ for
each $\lambda\in\Lambda.$ As $e_\lambda x \rightarrow x$ in the
weak*                         topology,  $x\in \overline{[  \mathcal
K\cap L^p(\mathcal{M},\tau)]_p\cap
            \mathcal{M}}^{w*} $. And we
            obtain  $  \mathcal K\subseteq \overline{[  \mathcal K\cap L^p(\mathcal{M},\tau)]_p\cap
            \mathcal{M}}^{w*}.$

         Next, we will show that
$$  \overline{[  \mathcal K\cap L^p(\mathcal{M},\tau)]_p\cap
            \mathcal{M}}^{w*}\subseteq \mathcal K .$$
Since $\mathcal K$ is weak$^*$-closed, it suffices to show that
$$   [  \mathcal K\cap L^p(\mathcal{M},\tau)]_p\cap
            \mathcal{M} \subseteq \mathcal K .$$
 Assume, to the contrary, that $x$ is an element in $ {[  \mathcal K\cap L^p(\mathcal{M},\tau)]_p\cap
\mathcal{M}} $, but $x\notin   \mathcal K$.
            Thus, by the Hahn-Banach theorem, there exists a weak* continuous linear functional $\varphi$ on $\mathcal{M}$ such that $\varphi(x)\neq 0$ and $\varphi(y)=0$ for every $y\in   \mathcal K$.
            Or, there exists a $\xi \in L^1(\mathcal{M},\tau)$ such that
\begin{enumerate}
\item [(a)]
            $\tau(x\xi)\neq 0$; and
            \item [(b)] $\tau(y\xi)=0$ for every $y\in   \mathcal K$. \end{enumerate}
By Lemma
            \ref{lemma2.13}, there exists a net
            $\{e_\lambda\}_{\lambda\in\Lambda}$ of projections in
            $\mathcal D$ such that   $\tau(e_\lambda)<\infty$ for each $\lambda\in\Lambda$ and $\lim_\lambda \tau(e_\lambda
            x\xi)=\tau(x\xi).$  So we may always assume that   there exists a projection $e$ in $\mathcal D$
             with $0<\tau(e)<\infty$ such that
\begin{enumerate}
\item [(a$_1$)]              $\tau(ex\xi)\neq 0$;  and
\item [(b$_1$)] $\tau(ey\xi)=0$ for every $y\in   \mathcal K $
(as
              $  \mathcal K$ is $H^\infty$-invariant and $e\in\mathcal D\subseteq
              H^\infty$).\end{enumerate}
We claim there exists a $z=ze\in \mathcal{M}e$ such that
\begin{enumerate}
\item [(a$_2$)]              $\tau( xz)\neq 0$;  and
\item [(b$_2$)] $\tau( yz)=0$ for every $y\in   \mathcal K $. \end{enumerate}
Observe that  $\xi $ is in $L^1(\mathcal M,\tau)$, and $e$ is a
projection in $\mathcal D$ such that $\tau(e)<\infty$. From Lemma
\ref{lemma2.12},   there exist  $h_3\in eH^\infty e$  and $h_4\in
eH^1 e$ such that $\xi eh_3\in e\mathcal{M}e$ and $h_3h_4=e$.
            Thus there exists a sequence  $\{k_n\}_{n\in\mathbb N}$ of elements in $eH^\infty e$ such that $\lim_{n\rightarrow \infty}\|k_n -
             h_4\|_1=0$. It follows that
             $$
\lim_{n\rightarrow \infty}|\tau(ex\xi)-\tau(x\xi e h_3
k_n)|=\lim_{n\rightarrow \infty}|\tau( x\xi e h_3h_4)-\tau(x\xi e
h_3 k_n)| \le \lim_{n\rightarrow \infty} \| x \| \|\xi e h_3
\|\|h_4-k_n\|_1=0.
             $$
Combining this with (a$_1$), we know that  there exists an $N\in\mathbb
N$ such that $\tau(x\xi e h_3 k_N)\ne 0$. Let $z=(\xi e h_3) k_N $
be in $\mathcal M $. Then $z=ze\in\mathcal Me$  satisfies
\begin{enumerate}
\item [(a$_2$)]              $\tau( xz) =\tau(x\xi e h_3 k_N)\neq 0$;  and
\item [(b$_2$)] $\tau( yz)=\tau(y\xi e h_3 k_N)= \tau((e h_3 k_N)y\xi )  =0$ for every $y\in   \mathcal K $. \end{enumerate}

Note that  $x\in  [  \mathcal K \cap L^p(\mathcal{M},\tau)]_p \cap
\mathcal{M} $.
            There exists a sequence $\{x_n\}_{n\in\mathcal N}$ in $   \mathcal K\cap L^p(\mathcal{M},\tau) $
             such that $\lim_{n\rightarrow \infty}\|x_n-x\|_p=0.$
Thus we have
\begin{align}
|\tau(xz-x_nz) |= |\tau((x-x_n)ze) |\le \|x_n-x\|_p \|z\|
\|e\|_q\rightarrow 0, \label{equation4.1}\end{align} where $q$
satisfies $1/p+1/q=1$. On the other hand, since
$\{x_n\}_{n\in\mathcal N}$ is in $ \mathcal K\cap
L^p(\mathcal{M},\tau) $, by (b$_2$) we have
$$
\tau( x_nz)=0, \qquad \forall \ n\in\mathbb N.
$$Combining with inequality (\ref{equation4.1}), we have $$\tau(xz)=0.$$ This contradicts the assumption in (a$_2$) that     $\tau( xz)\neq 0$.
Therefore,$$  \overline{[  \mathcal K\cap
L^p(\mathcal{M},\tau)]_p\cap
            \mathcal{M}}^{w*}\subseteq \mathcal K .$$

Hence $$  \mathcal K =\overline{[  \mathcal K\cap
L^p(\mathcal{M},\tau)]_p\cap
            \mathcal{M}}^{w*} .$$
\end{proof}

\begin{lemma}\label{lemma4.3.1}If $u$ is a partial isometry in
$\mathcal M$ such that $uu^*\in\mathcal D$, then
\begin{enumerate}
\item [(i)] $\displaystyle [(H^\infty u)\cap L^p(\mathcal M,\tau)]_p=H^pu    \text{
for all } 1\le p<\infty,$  and \item [(ii)] $ \displaystyle H^\infty
u= \overline{H^pu\cap \mathcal M}^{w^*}    \text{ for all } 1\le
p<\infty. $ \end{enumerate}
\end{lemma}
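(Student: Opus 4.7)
The plan is to exploit the partial-isometry identities $uu^*u = u$ and $u^*uu^* = u^*$ together with the fact that $uu^* \in \mathcal D \subseteq H^\infty$. These let me move reversibly between $H^\infty u$ and $H^\infty$, and between $H^p u$ and $H^p$, by right multiplication by $u^*$ and then $u$; this is the only algebraic idea in the proof.

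For part (i), I would establish the inclusion $H^p u \subseteq [(H^\infty u)\cap L^p(\mathcal M,\tau)]_p$ by approximation: given $y\in H^p$, pick $y_n \in H^\infty \cap L^p(\mathcal M,\tau)$ with $y_n \to y$ in $\|\cdot\|_p$; each $y_n u$ lies in $(H^\infty u)\cap L^p(\mathcal M,\tau)$, and $\|y_n u - yu\|_p \to 0$ by Lemma~\ref{Lemma2.5}. For the reverse inclusion I would take $x = hu \in (H^\infty u)\cap L^p(\mathcal M,\tau)$ and observe that $xu^* = h(uu^*) \in H^\infty$ (using $uu^* \in \mathcal D \subseteq H^\infty$), while $\|xu^*\|_p \le \|x\|_p$ by Lemma~\ref{Lemma2.5}, so $xu^* \in H^\infty \cap L^p(\mathcal M,\tau) \subseteq H^p$; then $x = xu^*u = (xu^*)u \in H^p u$.

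For part (ii), I plan to apply Proposition~\ref{proposition4.2} to $\mathcal K = H^\infty u$. Two preliminaries will be needed. First, $H^\infty u$ is left-$H^\infty$-invariant, since $H^\infty \cdot H^\infty u \subseteq H^\infty u$. Second, $H^\infty u$ is itself weak-$*$ closed: if a net $h_\alpha u \in H^\infty u$ converges weak-$*$ to $z$, then $h_\alpha uu^* \to zu^*$ weak-$*$, and $h_\alpha uu^* \in H^\infty$ since $uu^* \in H^\infty$, so $zu^* \in H^\infty$ (weak-$*$ closed); and from $h_\alpha u = h_\alpha u\cdot u^*u$ I obtain $z = zu^*u$ in the limit, hence $z = (zu^*)u \in H^\infty u$. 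Proposition~\ref{proposition4.2} then gives
\[
H^\infty u \;=\; \overline{[(H^\infty u) \cap L^p(\mathcal M,\tau)]_p \cap \mathcal M}^{w^*},
\]
and substituting the identity $[(H^\infty u) \cap L^p(\mathcal M,\tau)]_p = H^p u$ from part~(i) yields exactly $H^\infty u = \overline{H^p u \cap \mathcal M}^{w^*}$.

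The main technical point is the $u^*$-trick in the nontrivial inclusion of part~(i), which is the only place where the hypothesis $uu^* \in \mathcal D$ is used in an essential way; everything else reduces to continuity of right multiplication in $L^p$ and Lemma~\ref{Lemma2.5}. Once that is in hand, the transition from the $L^p$-statement of (i) to the weak-$*$-statement of (ii) is essentially automatic through Proposition~\ref{proposition4.2}, and one sidesteps the stronger density inclusion $H^p \cap \mathcal M \subseteq H^\infty$ that a direct attack on (ii) would otherwise force one to prove.
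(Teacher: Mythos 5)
Your proof is correct and follows the same route as the paper, which simply states that (i) ``can be verified directly'' and that (ii) ``follows from Proposition \ref{proposition4.2} and (i)''; your $u^*$-trick for (i) and your verification that $H^\infty u$ is weak-$*$ closed and left $H^\infty$-invariant supply exactly the details the paper omits. The only point worth tightening is in the reverse inclusion of (i): you show $(H^\infty u)\cap L^p(\mathcal M,\tau)\subseteq H^p u$, but the left-hand side of the asserted identity is the $\|\cdot\|_p$-closure of that set, so you should also record that $H^p u$ is $\|\cdot\|_p$-closed --- which follows from the same trick applied to a convergent sequence $x_n=k_n u$ with $k_n\in H^p$: then $x_nu^*=k_nuu^*\to xu^*\in H^p$ since $H^p\mathcal D\subseteq H^p$ and $H^p$ is closed, and $x=\lim x_nu^*u=(xu^*)u\in H^pu$.
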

\begin{proof}
(i) can be verified directly. (ii) follows from Proposition
\ref{proposition4.2} and (i).
\end{proof}

 \begin{proposition}\label{prop4.4}
Let $\mathcal M$ be a von Neumann algebra with a faithful, normal,
semifinite tracial weight $\tau$ and let $H^\infty$  be  a semifinite
subdigonal subalgebra   of $\mathcal M$.  Assume that  $  S
\subseteq \mathcal{M} $ is a subspace such that  $H^\infty  S
\subseteq  S$.   Then
          $$[S\cap L^p(\mathcal{M},\tau)]_p = [\overline{S}^{w^*} \cap L^p(\mathcal{M},\tau)]_p, \quad \forall \ 1\le p<\infty.$$

     \end{proposition}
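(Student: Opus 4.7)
The inclusion $[S\cap L^p(\mathcal M,\tau)]_p\subseteq[\overline{S}^{w^*}\cap L^p(\mathcal M,\tau)]_p$ is immediate because $S\subseteq\overline{S}^{w^*}$, so I would focus on the reverse inclusion. Set $\mathcal K_1=[S\cap L^p(\mathcal M,\tau)]_p$. The plan is to show $\overline{S}^{w^*}\cap L^p(\mathcal M,\tau)\subseteq \mathcal K_1$; taking $L^p$-closures then gives the other inclusion.

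First I would check that $\mathcal K_1$ is a closed, left $H^\infty$-invariant subspace of $L^p(\mathcal M,\tau)$. Indeed, if $x\in S\cap L^p(\mathcal M,\tau)$ and $h\in H^\infty$, then $hx\in H^\infty S\subseteq S$, and $hx\in L^p(\mathcal M,\tau)$ by Lemma \ref{Lemma2.5}(2); hence $H^\infty(S\cap L^p(\mathcal M,\tau))\subseteq S\cap L^p(\mathcal M,\tau)$, and this passes to $\|\cdot\|_p$-closure by boundedness of left multiplication. This puts $\mathcal K_1$ in the setting of Proposition \ref{proposition4.1}, so
$$\mathcal K_1\cap\mathcal M=\overline{\mathcal K_1\cap\mathcal M}^{w^*}\cap L^p(\mathcal M,\tau).$$

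The key step is to show $S\subseteq\overline{\mathcal K_1\cap\mathcal M}^{w^*}$. By Lemma \ref{lemma2.13} there is a net $\{e_\lambda\}_{\lambda\in\Lambda}$ of projections in $\mathcal D$ with $\tau(e_\lambda)<\infty$ and $e_\lambda\to I$ in the weak $*$-topology. Fix $x\in S\subseteq\mathcal M$. Since $e_\lambda\in\mathcal D\subseteq H^\infty$ and $H^\infty S\subseteq S$, we have $e_\lambda x\in S$; since $\|e_\lambda x\|_p\le\|e_\lambda\|_p\|x\|<\infty$, also $e_\lambda x\in L^p(\mathcal M,\tau)$. Thus $e_\lambda x\in S\cap L^p(\mathcal M,\tau)\subseteq\mathcal K_1\cap\mathcal M$. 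Lemma \ref{Lemma2.1} gives $e_\lambda x\to x$ weak-$*$, so $x\in\overline{\mathcal K_1\cap\mathcal M}^{w^*}$.

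Because $\overline{\mathcal K_1\cap\mathcal M}^{w^*}$ is weak-$*$-closed, it follows that $\overline{S}^{w^*}\subseteq\overline{\mathcal K_1\cap\mathcal M}^{w^*}$. Intersecting with $L^p(\mathcal M,\tau)$ and applying the identity from Proposition \ref{proposition4.1}(i),
$$\overline{S}^{w^*}\cap L^p(\mathcal M,\tau)\subseteq\overline{\mathcal K_1\cap\mathcal M}^{w^*}\cap L^p(\mathcal M,\tau)=\mathcal K_1\cap\mathcal M\subseteq\mathcal K_1.$$
Taking the $\|\cdot\|_p$-closure of the left side finishes the proof.

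The only subtle point is the weak-$*$ approximation step: one needs the semifiniteness of $\tau$ restricted to $\mathcal D$, the inclusion $\mathcal D\subseteq H^\infty$, and the invariance $H^\infty S\subseteq S$ to produce approximants $e_\lambda x$ living simultaneously in $S$ and in $L^p(\mathcal M,\tau)$. Everything else is a routine packaging of Propositions \ref{proposition4.1} and \ref{proposition4.2}.
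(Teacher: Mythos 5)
Your proof is correct and follows essentially the same route as the paper: both arguments hinge on Proposition \ref{proposition4.1}(i) applied to $[S\cap L^p(\mathcal M,\tau)]_p$ together with the net of finite-trace projections in $\mathcal D$ from Lemma \ref{lemma2.13}. The only difference is organizational --- you first establish $\overline{S}^{w^*}\subseteq\overline{\mathcal K_1\cap\mathcal M}^{w^*}$ and then intersect with $L^p(\mathcal M,\tau)$, whereas the paper fixes $x\in\overline{S}^{w^*}\cap L^p(\mathcal M,\tau)$ and handles the two limits ($e_\lambda x\to x$ in $\|\cdot\|_p$ and $e_\lambda x_j\to e_\lambda x$ weak-$*$) inside one argument; your ordering is if anything slightly cleaner.
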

\begin{proof} It suffices to show that
$$\overline{S}^{w^*} \cap L^p(\mathcal{M},\tau) \subseteq [S\cap
L^p(\mathcal{M},\tau)]_p.$$ Let $x\in \overline{S}^{w^*} \cap
L^p(\mathcal{M},\tau)$.  By Lemma
            \ref{lemma2.13}, there exists a net
            $\{e_\lambda\}_{\lambda\in\Lambda}$ of projections in
            $\mathcal D$ such that $e_\lambda \rightarrow I$ in the weak* topology  and $\tau(e_\lambda)<\infty$ for each $\lambda\in\Lambda$.
           By Lemma \ref{Lemma2.6.2}, $\lim_\lambda \|e_\lambda
           x-x\|_p=0.$ To show that $x\in [S\cap
L^p(\mathcal{M},\tau)]_p$, it is enough to show that $e_\lambda x\in
[S\cap L^p(\mathcal{M},\tau)]_p$ for each $\lambda\in\Lambda$.

By Proposition \ref{proposition4.1}, we have
$$
[S\cap L^p(\mathcal{M},\tau)]_p \cap \mathcal M= \overline{[S\cap
L^p(\mathcal{M},\tau)]_p \cap \mathcal M}^{w^*}\cap L^p(\mathcal
M,\tau).
$$
Since $x\in \overline{S}^{w^*} \cap L^p(\mathcal{M},\tau)$, there
exists a net $\{x_j\}_{j\in J}$ in $S$ such that $x_j\rightarrow x$
in weak$^*$ topology. By Lemma \ref{Lemma2.1}, $e_\lambda
x_j\rightarrow e_\lambda x$ in weak$^*$ topology for each $\lambda$.
Note that $\|e_\lambda x_j\|_p\le  \|e_\lambda \|_p \|x_j\|$ and
$H^\infty S\subseteq S$. We know that $e_\lambda x_j\in S\cap
L^p(\mathcal{M},\tau)$. So $e_\lambda x$ is in $\overline{[S\cap
L^p(\mathcal{M},\tau)]_p \cap \mathcal M}^{w^*}$. It is trivial to
see that $e_\lambda x\in L^p(\mathcal{M},\tau)$. Hence,
$$
e_\lambda x\in \overline{[S\cap L^p(\mathcal{M},\tau)]_p \cap
\mathcal M}^{w^*}\cap L^p(\mathcal M,\tau)= [S\cap
L^p(\mathcal{M},\tau)]_p.
$$ So
$$x\in
[S\cap L^p(\mathcal{M},\tau)]_p.$$ Thus $$\overline{S}^{w^*} \cap
L^p(\mathcal{M},\tau) \subseteq [S\cap L^p(\mathcal{M},\tau)]_p.$$
Hence
  $$[S\cap L^p(\mathcal{M},\tau)]_p = [\overline{S}^{w^*} \cap L^p(\mathcal{M},\tau)]_p, \quad \forall \ 1\le p<\infty.$$

    \end{proof}

        \begin{theorem}\label{theorem4.5}
Let $\mathcal M$ be a von Neumann algebra with a faithful, normal,
semifinite tracial weight $\tau$, and $H^\infty$  be  a semifinite
subdigonal subalgebra   of $\mathcal M$.  Let $\mathcal
D=H^\infty\cap (H^\infty)^*$. Assume that $\mathcal K \subseteq
\mathcal{M} $ is weak$^*$-closed subspace such that $H^\infty
\mathcal K \subseteq \mathcal K$.

             Then there exist   a weak* closed subspace $Y$ of $\mathcal M$ and a family $\{u_\lambda\}_{\lambda \in\Lambda}$ of partial isometries in
             $ \mathcal{M}$  such that:
            \begin{enumerate}
              \item  [(i)] $u_\lambda Y^*=0$ for all ${\lambda\in\Lambda}$.
\item  [(ii)] $u_\lambda u_\lambda^*\in \mathcal D$ and $u_\lambda u_\mu^*=0$
for all $\lambda, \mu\in \Lambda$ with $\lambda\ne \mu$.
\item  [(iii)] $Y=\overline{H^\infty_0Y}^{w^*}$.
                \item [(iv)] $\mathcal K=Y \oplus^{row} (\oplus^{row}_{\lambda\in\Lambda } H^\infty u_\lambda)$
            \end{enumerate} Here $\oplus^{row}$ is the row sum of
            subspaces
            defined in Definition \ref{definition2.15}.
        \end{theorem}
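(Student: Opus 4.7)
The plan is to reduce Theorem \ref{theorem4.5} to the Hilbert-space case proved in Theorem \ref{theorem3.1} by passing through $L^2(\mathcal M,\tau)$ and using the density theorems in Propositions \ref{proposition4.1}, \ref{proposition4.2}, \ref{prop4.4} together with Lemma \ref{lemma4.3.1}. I would form the closed $H^\infty$-invariant subspace $\mathcal K_2 := [\mathcal K\cap L^2(\mathcal M,\tau)]_2$ of $L^2(\mathcal M,\tau)$. Propositions \ref{proposition4.1} and \ref{proposition4.2} applied to $\mathcal K$ give
\[
\mathcal K_2\cap \mathcal M \;=\; \mathcal K\cap L^2(\mathcal M,\tau), \qquad \mathcal K \;=\; \overline{\mathcal K_2\cap \mathcal M}^{\,w^*}.
\]
Theorem \ref{theorem3.1} applied to $\mathcal K_2$ then produces a closed subspace $Y_2\subseteq L^2(\mathcal M,\tau)$ and a family $\{u_\lambda\}_{\lambda\in\Lambda}$ of partial isometries in $\mathcal M$ satisfying the $L^2$-version of (i)--(iv); in particular, property (ii) of Theorem \ref{theorem4.5} is inherited verbatim.

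I would then set $Y := \overline{Y_2\cap \mathcal M}^{\,w^*}$ and verify the remaining properties. By Proposition \ref{proposition4.1} applied to $Y_2$, we have $Y\cap L^2(\mathcal M,\tau) = Y_2\cap \mathcal M$ and $Y_2 = [Y\cap L^2(\mathcal M,\tau)]_2$. Property (i) is immediate: a net $(y_j)\subset Y_2\cap\mathcal M$ converging weak$^*$ to $y\in Y$ satisfies $u_\lambda y_j^*=0$ by Theorem \ref{theorem3.1}(i), so weak$^*$-continuity of left multiplication by $u_\lambda$ and of the adjoint on $\mathcal M$ forces $u_\lambda y^*=0$. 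For property (iv), let $\mathcal K' := \overline{Y + \sum_\lambda H^\infty u_\lambda}^{\,w^*}$; the orthogonality conditions required by $\oplus^{row}$ follow from (i) and (ii). The inclusion $\mathcal K'\subseteq \mathcal K$ is clear because $Y\subseteq\mathcal K$ and each $u_\lambda\in\mathcal K_2\cap\mathcal M\subseteq \mathcal K$. Conversely, Lemma \ref{lemma4.3.1}(i) gives $[H^\infty u_\lambda\cap L^2(\mathcal M,\tau)]_2 = H^2 u_\lambda$, from which $[\mathcal K'\cap L^2(\mathcal M,\tau)]_2$ is seen to contain $Y_2$ and each $H^2 u_\lambda$, hence to equal $\mathcal K_2$; Proposition \ref{proposition4.2} then gives $\mathcal K' = \overline{\mathcal K_2\cap\mathcal M}^{\,w^*}=\mathcal K$.

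The main technical obstacle will be property (iii), where one must transfer the $L^2$-pure-invariance $Y_2=[H^\infty_0 Y_2]_2$ into the weak$^*$-pure-invariance $Y=\overline{H^\infty_0 Y}^{\,w^*}$. The inclusion $\overline{H^\infty_0 Y}^{\,w^*}\subseteq Y$ follows from $H^\infty Y\subseteq Y$, itself a consequence of $H^\infty(Y_2\cap\mathcal M)\subseteq Y_2\cap\mathcal M$ together with weak$^*$-continuity of left multiplication. For the reverse inclusion, I would note that $H^\infty_0 Y$ is $H^\infty$-invariant (since $H^\infty H^\infty_0\subseteq H^\infty_0$), so Proposition \ref{prop4.4} yields
\[
\bigl[\overline{H^\infty_0 Y}^{\,w^*}\cap L^2(\mathcal M,\tau)\bigr]_2 \;=\; \bigl[H^\infty_0 Y\cap L^2(\mathcal M,\tau)\bigr]_2.
\]
Because $H^\infty_0(Y_2\cap\mathcal M)\subseteq H^\infty_0 Y\cap L^2(\mathcal M,\tau)$, and $L^2$-continuous left multiplication by elements of $H^\infty_0$ makes this set $L^2$-dense in $H^\infty_0 Y_2$, the right-hand closure equals $[H^\infty_0 Y_2]_2=Y_2$. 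Finally, Proposition \ref{proposition4.2} applied to the weak$^*$-closed $H^\infty$-invariant subspace $\overline{H^\infty_0 Y}^{\,w^*}\subseteq\mathcal M$ gives $\overline{H^\infty_0 Y}^{\,w^*}=\overline{Y_2\cap\mathcal M}^{\,w^*}=Y$, which completes the argument.
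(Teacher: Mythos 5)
Your overall strategy is exactly the paper's: pass to $\mathcal K_2=[\mathcal K\cap L^2(\mathcal M,\tau)]_2$, invoke Theorem \ref{theorem3.1}, set $Y=\overline{Y_2\cap\mathcal M}^{\,w^*}$, and transfer properties (i)--(iv) back using Propositions \ref{proposition4.1}, \ref{proposition4.2}, \ref{prop4.4} and Lemma \ref{lemma4.3.1}. Your treatments of (i), (iii) and of the reverse inclusion in (iv) are correct and match the paper's in substance (the paper proves (iii) by chasing $Y\subseteq\overline{H^\infty_0Y}^{\,w^*}$ through the same identities you use; your version, which computes $[\overline{H^\infty_0Y}^{\,w^*}\cap L^2]_2=Y_2$ first and then applies Proposition \ref{proposition4.2}, is an equivalent reorganization).

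There is, however, one genuinely false step: in proving $\mathcal K'\subseteq\mathcal K$ you assert that each $u_\lambda\in\mathcal K_2\cap\mathcal M$. This fails in the semifinite setting, because $\mathcal K_2\subseteq L^2(\mathcal M,\tau)$ while the partial isometry $u_\lambda$ produced by Theorem \ref{theorem3.1} need not be square-integrable: $u_\lambda$ arises as the phase in a polar decomposition $x=hu_\lambda$ with $h=|x^*|\in L^2(\mathcal D,\tau)$, and the range projection $u_\lambda u_\lambda^*$ of such an $h$ can easily have infinite trace (e.g.\ a diagonal operator with singular values $1/n$ is Hilbert--Schmidt but has range projection of infinite trace). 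Indeed the whole point of the semifinite generalization is that one cannot treat $\mathcal M$ as sitting inside $L^2$. What you actually need is only $H^\infty u_\lambda\subseteq\mathcal K$, and this is obtained exactly as in the paper from Lemma \ref{lemma4.3.1}(ii): $H^\infty u_\lambda=\overline{H^2u_\lambda\cap\mathcal M}^{\,w^*}$, while $H^2u_\lambda\subseteq\mathcal K_2$ gives $H^2u_\lambda\cap\mathcal M\subseteq\mathcal K_2\cap\mathcal M\subseteq\mathcal K$, whence $H^\infty u_\lambda\subseteq\overline{\mathcal K_2\cap\mathcal M}^{\,w^*}=\mathcal K$. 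With that substitution your argument is complete and coincides with the paper's proof.
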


    \begin{proof}
Let $\mathcal K_1= [\mathcal K \cap L^2(\mathcal{M},\tau)]_2$. Then
$\mathcal K_1$ is a closed subspace of $L^2(\mathcal M,\tau)$ such
that
  $H^\infty \mathcal K_1\subseteq \mathcal K_1$.  By Theorem \ref{theorem3.1}, there exist a closed subspace $Y_1$ of $L^2(\mathcal
M,\tau)$ and a family $\{u_\lambda\}_{\lambda\in\Lambda}$ of partial
isometries in $\mathcal M$, satisfying
\begin{enumerate}
\item  [(a)] $u_\lambda Y_1^*=0$ for all ${\lambda\in\Lambda}$.
\item  [(b)] $u_\lambda u_\lambda^*\in \mathcal D$ and $u_\lambda u_\mu^*=0$
for all $\lambda, \mu\in \Lambda$ with $\lambda\ne \mu$.
\item  [(c)] $Y_1=[H^\infty_0Y_1]_2$, where $H^\infty_0=H^\infty\cap
ker(\Phi)$.
\item [(d)] $\mathcal K_1=Y_1\oplus \left ( \oplus_{\lambda\in\Lambda} H^2u_\lambda \right
)$
\end{enumerate}
Let
$$
Y=\overline{Y_1\cap \mathcal{M}}^{w^*}.
$$

(i) We show that (i) is satisfied. In fact, from (a) and Lemma
\ref{Lemma2.1}, we have
\begin{equation}
  \text{$u_\lambda Y^*=0$ for all ${\lambda\in\Lambda}$.}  \label{equa4.4.2}
\end{equation}

(ii)   follows directly from
 (b), i.e.
 \begin{equation}
  \text{ $u_\lambda u_\lambda^*\in \mathcal D$ and $u_\lambda u_\mu^*=0$
for all $\lambda, \mu\in \Lambda$ with $\lambda\ne \mu$.}  \label{equa4.4.3}
\end{equation}

(iii) We claim that
\begin{equation}
  Y=\overline{H^\infty_0Y}^{w^*}. \notag
\end{equation}
In fact, we need only to show that $Y\subseteq\overline{H_0^\infty
Y}^{w^*}$. By Proposition \ref{proposition4.1} and the definition of
$Y$, we have
\begin{align}
Y_1=[Y_1\cap\mathcal M]_2= [\overline {Y_1\cap\mathcal M}^{w*}\cap
L^2(\mathcal M,\tau)]_2=[Y\cap L^2(\mathcal M,\tau)]_2. \notag
\end{align} So
\begin{align}
H^\infty_0 Y_1= H^\infty_0 [Y\cap L^2(\mathcal M,\tau)]_2\subseteq
 [(H^\infty_0Y)\cap L^2(\mathcal M,\tau)]_2\subseteq
 [\overline{H^\infty_0Y}^{w^*}\cap L^2(\mathcal M,\tau)]_2.\notag
\end{align}
Thus, from (c), we have
\begin{align}
Y_1=[H^\infty_0 Y_1]_2\subseteq [\overline{H^\infty_0Y}^{w^*}\cap
L^2(\mathcal M,\tau)]_2.\label {equation4.2}
\end{align}
Now, we are able to conclude that
            \begin{align}
                Y &=\overline{Y_1\cap \mathcal{M}}^{w^*}\tag{by definition of $Y$}\\
                &\subseteq \overline{[\overline{H^\infty_0Y}^{w^*}\cap
L^2(\mathcal M,\tau)]_2\cap \mathcal{M}}^{w^*} \tag{by (\ref{equation4.2})}\\
                  & = \overline{H_0^\infty Y}^{w^*}. \tag {by Proposition \ref{proposition4.2}}
            \end{align}
           Thus
\begin{equation}
  Y=\overline{H^\infty_0Y}^{w^*}.   \label{equa4.4.4}
\end{equation}

(iv)   We show that
                $$\overline{span\{Y_2, H^\infty u_\lambda:\lambda\in\Lambda\}}^{w^*}=\mathcal K .$$
                By Proposition \ref{proposition4.2}, it suffices to
                show that
 $$\overline{span\{Y , H^\infty u_\lambda  : \lambda\in\Lambda\}}^{w^*}=\overline{[\mathcal K \cap L^2(\mathcal{M},\tau)]_2\cap
 \mathcal{M}}^{w^*}=\mathcal K.$$
First, we have that $span\{Y , H^\infty
u_\lambda:\lambda\in\Lambda\} \subseteq \overline{[\mathcal K \cap
L^2(\mathcal{M},\tau)]_2\cap
 \mathcal{M}}^{w^*}$.  In fact, $Y=\overline{Y_1\cap \mathcal{M}}^{w^*}$  and $Y_1\subseteq [\mathcal K \cap L^2(\mathcal{M},\tau)]_2$,
  so $Y \subseteq  \overline{[\mathcal K \cap L^2(\mathcal{M},\tau)]_2\cap
 \mathcal{M}}^{w^*}$. Moreover, for each $\lambda\in\Lambda$, by Lemma \ref{lemma4.3.1}, we have $H^\infty u_\lambda =\overline{H^2u_\lambda\cap
  \mathcal M}^{w^*}\subseteq \overline{[\mathcal K \cap L^2(\mathcal{M},\tau)]_2\cap
 \mathcal{M}}^{w^*}$.  So $$span\{Y , H^\infty u_\lambda : \lambda\in\Lambda\} \subseteq
\overline{[\mathcal K \cap L^2(\mathcal{M},\tau)]_2\cap
 \mathcal{M}}^{w^*}.$$
 Thus  \begin{align}
 \overline{span\{Y , H^\infty u_\lambda : \lambda\in\Lambda\}}^{w^*}\subseteq\overline{[\mathcal K \cap L^2(\mathcal{M},\tau)]_2\cap
 \mathcal{M}}^{w^*}=\mathcal K. \label{equa4.3}\end{align}
        Next, define $X=\overline{span\{Y, H^\infty u_\lambda : \lambda\in\Lambda\}}^{w^*}$. We want to show that $$\overline{[\mathcal K \cap L^2(\mathcal{M},\tau)]_2\cap
 \mathcal{M}}^{w^*} \subseteq X.$$
            Notice $X$ is weak*-closed and $H^\infty X\subseteq X$. By Proposition \ref{proposition4.2},  $X=\overline{[X \cap L^2(\mathcal{M},\tau)]_2\cap \mathcal{M}}^{w^*}$.
            Therefore we need only to show that $[\mathcal K \cap L^2(\mathcal{M},\tau)]_2\subseteq [X\cap L^2(\mathcal{M},\tau)]_2$. Or,
            equivalently, we may show
$Y_1$ and $\{H^2u_\lambda\}_{\lambda\in\Lambda}$ are in $[X\cap
L^2(\mathcal{M},\tau)]_2$. By Proposition \ref{proposition4.1}, we
have
\begin{align}
Y_1=[Y_1\cap\mathcal M]_2= [\overline {Y_1\cap\mathcal M}^{w*}\cap
L^2(\mathcal M,\tau)]_2=[Y\cap L^2(\mathcal M,\tau)]_2. \notag
\end{align} Thus \begin{align}Y_1  \subseteq [X\cap L^2(\mathcal{M},\tau)]_2.\label{equa4.2.1}\end{align} By Lemma \ref{lemma4.3.1},
\begin{align}H^2u_\lambda= [H^\infty u_\lambda\cap L^2(\mathcal M,\tau)]_2 \subseteq
[X\cap L^2(\mathcal{M},\tau)]_2\quad \text{for each
$\lambda\in\Lambda$.}\label{equa4.2.2}\end{align}  Hence, from
(\ref{equa4.2.1}) and (\ref{equa4.2.2}),  we get $[\mathcal K \cap
L^2(\mathcal{M},\tau)]_2\subseteq [X\cap L^2(\mathcal{M},\tau)]_2$
and
\begin{align}
\mathcal K= \overline{[\mathcal K \cap L^2(\mathcal{M},\tau)]_2\cap
 \mathcal{M}}^{w^*} \subseteq
\overline{span\{Y, H^\infty
u_\lambda\lambda\in\Lambda\}}^{w^*}.\label{equa4.4}
\end{align}
Now, combining   (\ref{equa4.3}) and    (\ref{equa4.4}), we have
\begin{equation}
  \mathcal K  =
\overline{span\{Y, H^\infty u_\lambda\lambda\in\Lambda\}}^{w^*} = Y
\oplus^{row} (\oplus^{row}_{\lambda\in\Lambda } H^\infty u_\lambda)
, \label{equa4.4.1}
\end{equation} by the definition of row sum of subspaces.

By (\ref{equa4.4.1}), (\ref{equa4.4.2}), (\ref{equa4.4.3}) and (\ref{equa4.4.4}), we know that $Y$ and
$\{u_\lambda\}_{\lambda \in\Lambda}$ have the desired properties. This
ends the proof of the theorem.
               \end{proof}

    Next, we use our result for $p=\infty$ and the density theorem to prove the case when $1\leq p<\infty$.

        \begin{theorem} \label{theorem4.6} Let $1\le p<\infty$.
Let $\mathcal M$ be a von Neumann algebra with a faithful, normal,
semifinite tracial weight $\tau$, and $H^\infty$  be  a semifinite
subdigonal subalgebra   of $\mathcal M$.  Let $\mathcal
D=H^\infty\cap (H^\infty)^*$. Assume that $\mathcal K  $ is a closed
subspace of $L^p(\mathcal M,\tau)$ such that $H^\infty \mathcal K
\subseteq \mathcal K$.

             Then there exist   a   closed subspace $Y$ of $L^p(\mathcal M,\tau)$ and a family $\{u_\lambda\}_{\lambda \in\Lambda}$ of partial isometries in
             $ \mathcal{M}$  such that:
            \begin{enumerate}
              \item  [(i)] $u_\lambda Y^*=0$ for all ${\lambda\in\Lambda}$.
\item  [(ii)] $u_\lambda u_\lambda^*\in \mathcal D$ and $u_\lambda u_\mu^*=0$
for all $\lambda, \mu\in \Lambda$ with $\lambda\ne \mu$.
\item  [(iii)] $Y= [H^\infty_0Y]_p$.
                \item [(iv)] $\mathcal K=Y \oplus^{row} (\oplus^{row}_{\lambda\in\Lambda } H^p u_\lambda)$
            \end{enumerate} Here $\oplus^{row}$ is the row sum of
            subspaces
            defined in Definition \ref{definition2.15}.
        \end{theorem}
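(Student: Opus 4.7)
The strategy is to reduce the $L^p$-case to the weak$^*$-case (Theorem \ref{theorem4.5}) via the density machinery of Propositions \ref{proposition4.1}, \ref{proposition4.2}, and \ref{prop4.4}. First, set
$$
\mathcal K_\infty := \overline{\mathcal K\cap\mathcal M}^{w^*}.
$$
This is weak$^*$-closed and $H^\infty$-invariant, and Propositions \ref{proposition4.1}(ii) and \ref{prop4.4} applied to $S=\mathcal K\cap\mathcal M$ give
$$
\mathcal K=[\mathcal K\cap\mathcal M]_p=[\mathcal K_\infty\cap L^p(\mathcal M,\tau)]_p.
$$
Apply Theorem \ref{theorem4.5} to $\mathcal K_\infty$ to produce a weak$^*$-closed subspace $Y_\infty\subseteq\mathcal M$ and a family $\{u_\lambda\}_{\lambda\in\Lambda}$ of partial isometries satisfying the four weak$^*$-analogues of (i)--(iv). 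Then define $Y := [Y_\infty\cap L^p(\mathcal M,\tau)]_p$ and keep the same family $\{u_\lambda\}$. Property (i) follows from $u_\lambda(Y_\infty\cap L^p)^*=0$ and continuity of multiplication on $L^p$, while (ii) is inherited verbatim.

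For (iii), first note that $H^\infty H_0^\infty\subseteq H_0^\infty$ (since $\Phi(hh_0)=\Phi(h)\Phi(h_0)=0$), so $S'=H_0^\infty Y_\infty\subseteq\mathcal M$ satisfies $H^\infty S'\subseteq S'$. Proposition \ref{prop4.4} then yields
$$
[H_0^\infty Y_\infty\cap L^p]_p=[\overline{H_0^\infty Y_\infty}^{w^*}\cap L^p]_p=[Y_\infty\cap L^p]_p=Y.
$$
One inclusion $[H_0^\infty Y]_p\subseteq Y$ follows from $H_0^\infty Y\subseteq Y$ (by left-$H^\infty$-invariance of $Y$). The reverse inclusion $Y\subseteq[H_0^\infty Y]_p$ is obtained by approximating each $y_0\in Y_\infty\cap L^p$ via right-truncation $y_0 p e_\alpha$, where $p=I-\sum_\lambda u_\lambda u_\lambda^*$ (so $Y_\infty=Y_\infty p$ by property (a) of Theorem \ref{theorem4.5}) and $\{e_\alpha\}$ is the finite-trace net from Lemma \ref{lemma2.13}, combined with Lemma \ref{Lemma2.6.2} to control the $L^p$-error.

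For (iv), apply Proposition \ref{prop4.4} to $S=\mathrm{span}\{Y_\infty,\ H^\infty u_\lambda:\lambda\in\Lambda\}$, which is $H^\infty$-invariant with $\overline{S}^{w^*}=\mathcal K_\infty$, to conclude $\mathcal K=[S\cap L^p]_p$. An element of $S\cap L^p$ has the form $k=y+\sum_{i=1}^n h_iu_{\lambda_i}$ with $y\in Y_\infty$ and $h_i\in H^\infty$; right-multiplying by the mutually orthogonal projections $p$ and $q_\lambda=u_\lambda u_\lambda^*\in\mathcal D$ gives $kp=y$ and $kq_\lambda=h_{i(\lambda)}u_\lambda$, showing that each summand lies separately in $L^p$. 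Thus $y\in Y_\infty\cap L^p\subseteq Y$ and $h_iu_{\lambda_i}\in H^\infty u_{\lambda_i}\cap L^p\subseteq H^pu_{\lambda_i}$ (using Lemma \ref{lemma4.3.1}(i)), which gives $\mathcal K\subseteq Y\oplus^{row}(\oplus^{row}_\lambda H^pu_\lambda)$. The reverse inclusion is immediate since $Y=[Y_\infty\cap L^p]_p\subseteq\mathcal K$ and $H^pu_\lambda=[H^\infty u_\lambda\cap L^p]_p\subseteq\mathcal K$. The row-orthogonality conditions of Definition \ref{definition2.12} are preserved under $L^p$-closure by continuity of multiplication.

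The main obstacle is the transfer in (iii): the natural decomposition of an element of $H_0^\infty Y_\infty\cap L^p$ as a finite sum $\sum h_iy_i$ need not have each $y_i$ individually in $L^p$, so one cannot directly identify this space with $H_0^\infty Y$. The right-truncation by $pe_\alpha$ circumvents this, but requires both the row-orthogonality structure (to produce $p$) and the semifiniteness of $\tau|_{\mathcal D}$ (to produce $e_\alpha$). A secondary technical point in (iv) is that the row-sum structure in $L^p$ must be shown to be an internal row sum in the sense of Definition \ref{definition2.12}, i.e.\ that the orthogonality $X_jX_i^*=\{0\}$ passes to $L^p$-closures, which follows from Hölder's inequality (Lemma \ref{Lemma2.5}).
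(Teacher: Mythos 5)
Your overall architecture is the same as the paper's: pass to $\mathcal K_1=\overline{\mathcal K\cap\mathcal M}^{w^*}$, invoke Theorem \ref{theorem4.5}, set $Y=[Y_1\cap L^p(\mathcal M,\tau)]_p$, and keep the same $\{u_\lambda\}$; parts (i), (ii) and (iv) are essentially the paper's argument (your explicit projection-splitting in (iv) is a reasonable way to justify the step the paper labels ``by (a) and (b)''). There is, however, a genuine gap in your proof of (iii). Your chain correctly gives $Y=[H_0^\infty Y_\infty\cap L^p(\mathcal M,\tau)]_p$ and $[H_0^\infty Y]_p\subseteq Y$, but the reverse inclusion $Y\subseteq[H_0^\infty Y]_p$ is not delivered by right-truncation. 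You correctly identify the obstacle (an element of $H_0^\infty Y_\infty\cap L^p(\mathcal M,\tau)$ is a finite sum $\sum_i h_iy_i$ whose terms $y_i$ need not lie in $L^p$), but truncating to $\sum_i h_i(y_ie_\alpha)$ trades that problem for a worse one: $Y_\infty$ is only \emph{left} $H^\infty$-invariant, not a right $\mathcal D$-module, so $y_ie_\alpha$ has no reason to lie in $Y_\infty$, hence $h_i(y_ie_\alpha)$ has no reason to lie in $H_0^\infty Y$. (Even if one approximates $y_0\in Y_\infty$ weak$^*$ by sums from $H_0^\infty Y_\infty$ and then truncates, one only obtains weak$^*$ approximants, not $\|\cdot\|_p$ approximants, so an additional density result is still needed.) The paper closes this gap differently: by Proposition \ref{proposition4.2}, $Y_1=\overline{[Y_1\cap L^p]_p\cap\mathcal M}^{w^*}=\overline{Y\cap\mathcal M}^{w^*}$; then $H_0^\infty Y_1\subseteq\overline{H_0^\infty(Y\cap\mathcal M)}^{w^*}$ by weak$^*$ continuity of left multiplication (Lemma \ref{Lemma2.1}), and Proposition \ref{prop4.4} applied to $S=H_0^\infty(Y\cap\mathcal M)$ --- whose elements are finite sums of terms each already in $L^p$ --- yields $[\,\overline{S}^{w^*}\cap L^p\,]_p=[S]_p\subseteq[H_0^\infty Y]_p$. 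You should replace your truncation step with this detour through $\overline{Y\cap\mathcal M}^{w^*}$.

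A secondary, fixable error: you build $p$ and $q_\lambda$ from the \emph{range} projections $u_\lambda u_\lambda^*$. Condition (i) of Theorem \ref{theorem4.5} says $u_\lambda Y_1^*=0$, i.e.\ $Y_1u_\lambda^*=0$, which kills $Y_1(u_\lambda^*u_\lambda)$ but says nothing about $Y_1(u_\lambda u_\lambda^*)$; likewise $u_\mu u_\lambda^*=0$ gives $u_\mu(u_\lambda^*u_\lambda)=0$ but not $u_\mu(u_\lambda u_\lambda^*)=0$. So the projections you need are the mutually orthogonal \emph{initial} projections $q_\lambda=u_\lambda^*u_\lambda$ and $p=I-\sum_\lambda u_\lambda^*u_\lambda$; with that substitution your splitting computation in (iv) ($kq_\lambda=h_{i(\lambda)}u_\lambda$, $kp=y$) goes through.
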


    \begin{proof}
Let $\mathcal K_1= \overline{\mathcal K \cap  \mathcal{M} }^{w^*}$.
Then $\mathcal K_1$ is a  weak$^*$-closed subspace of $\mathcal{M}$
such that
  $H^\infty \mathcal K_1\subseteq \mathcal K_1$.  By Theorem \ref{theorem4.5}, there exist a weak$^*$-closed subspace $Y_1$ of $ \mathcal
M $ and a family $\{u_\lambda\}_{\lambda\in\Lambda}$ of partial
isometries in $\mathcal M$, satisfying
\begin{enumerate}
              \item  [(a)] $u_\lambda Y_1^*=0$ for all ${\lambda\in\Lambda}$.
\item  [(b)] $u_\lambda u_\lambda^*\in \mathcal D$ and $u_\lambda u_\mu^*=0$
for all $\lambda, \mu\in \Lambda$ with $\lambda\ne \mu$.
\item  [(c)] $Y_1=\overline{H^\infty_0Y_1}^{w^*}$.
                \item [(d)] $\mathcal K_1=Y_1 \oplus^{row} (\oplus^{row}_{\lambda\in\Lambda } H^\infty u_\lambda)$
\end{enumerate}
Let
$$
Y=[{Y_1\cap L^p(\mathcal{M},\tau})]_p.
$$

(i) From (a), the definition of $Y$ and Lemma \ref{Lemma2.5}, we can
conclude that
\begin{align}
\text{$u_\lambda Y^*=0$ for all
${\lambda\in\Lambda}$.\label{equa4.11}}
\end{align}

(ii)   follows directly from
 (b), i.e.
 \begin{equation}
  \text{ $u_\lambda u_\lambda^*\in \mathcal D$ and $u_\lambda u_\mu^*=0$
for all $\lambda, \mu\in \Lambda$ with $\lambda\ne \mu$.}
\label{equa4.12}
\end{equation}

(iii) We want to show that $Y= [H^\infty_0Y]_p$. In fact, we have
 \begin{align}
                Y &=[Y_1  \cap L^p(\mathcal{M},\tau)]_p\tag{by definition of $Y$}\\
                &=[\overline{H_0^\infty Y_1}^{w^*}\cap L^p(\mathcal{M},\tau)]_p\tag{by (c)}\\
                &=[(H_0^\infty Y_1)\cap L^p(\mathcal{M},\tau)]_p\tag{by Proposition \ref{prop4.4}}\\
                &=[\left (H_0^\infty \overline{[Y_1  \cap L^p(\mathcal{M},\tau)]_p \cap \mathcal M}^{w^*}\right)\cap L^p(\mathcal{M},\tau)]_p
                \tag{by Proposition \ref{proposition4.2}}\\
&\subseteq [ \overline{H_0^\infty \left ( [Y_1  \cap
L^p(\mathcal{M},\tau)]_p \cap \mathcal M\right)}^{w^*}\cap
L^p(\mathcal{M},\tau)]_p
                \tag{by Lemma  \ref{Lemma2.1}}\\
                &= [ \left ( H_0^\infty \left ( [Y_1  \cap
L^p(\mathcal{M},\tau)]_p \cap \mathcal M\right) \right )\cap
L^p(\mathcal{M},\tau)]_p
                \tag{by Proposition \ref{prop4.4}}\\
                 &= [ \left ( H_0^\infty \left ( Y\cap \mathcal M\right) \right )\cap
L^p(\mathcal{M},\tau)]_p
                \tag{by definition of $Y$}\\
                &\subseteq [H_0^\infty Y]_p\subseteq Y,\label{equa4.12.1}
            \end{align}

(iv) There is only left to show that $$\mathcal K=Y \oplus^{row}
(\oplus^{row}_{\lambda\in\Lambda } H^p u_\lambda).$$  By   the
definition of $Y$, we have
\begin{align}
Y=[Y_1\cap L^p(\mathcal M,\tau)]_p .\label{equa4.13}
            \end{align} And from Lemma \ref{lemma4.3.1}, we have
\begin{align}
H^pu_\lambda =[H^\infty u_\lambda \cap L^p(\mathcal M,\tau)]_p , \ \
\forall \ \lambda\in\Lambda.\label{equa4.14}
            \end{align} Now, we have
\begin{align}
\mathcal K&= [\mathcal K_1\cap L^p(\mathcal M,\tau)]_p \tag {by
Proposition \ref{proposition4.1}}\\
 &=[ \overline{span\{Y_1,
H^\infty u_\lambda : \lambda \in\Lambda \}}^{w^*}\cap L^p(\mathcal
M,\tau) ]_p \tag{by the definition of row sum of subspaces} \\
&=[
 {span\{Y_1, H^\infty u_\lambda : \lambda \in\Lambda \}} \cap
L^p(\mathcal M,\tau) ]_p\tag{by  Proposition \ref{prop4.4}}\\
&=[
 {span\{Y_1\cap
L^p(\mathcal M,\tau), H^\infty u_\lambda \cap L^p(\mathcal M,\tau):
\lambda \in\Lambda \}} ]_p\tag{by   (a) and (b)}\\
&  =[
 {span\{Y  , H^p u_\lambda  :
\lambda \in\Lambda \}} ]_p \tag  {by (\ref{equa4.13}) and
(\ref{equa4.14}) } \\& =Y \oplus^{row}
(\oplus^{row}_{\lambda\in\Lambda } H^p u_\lambda), \label{equa4.16}
\end{align}  where the last equation follows from the definition of the row sum of
  subspaces.

As a summary, from (\ref{equa4.11}), (\ref{equa4.12}),
(\ref{equa4.12.1}), and (\ref{equa4.16}), $Y$ and
$\{u_\lambda\}_{\lambda \in\Lambda}$ have the desired properties. This
ends the proof of the theorem.
    \end{proof}

\section{Beurling-Blecher-Labuschagne Theorem for Semifinite Hardy
Spaces,  $0<p<1$}

%

 \subsection{Dense subspaces}

        \begin{proposition}\label{proposition5.1}
            Suppose $0<p<1$.
 Let $\mathcal M$ be a von Neumann algebra
with a faithful, normal, semifinite tracial weight $\tau$, and
$H^\infty$  be  a semifinite subdigonal subalgebra   of $\mathcal
M$.   Assume that $\mathcal K $ is  a closed subspace in $
L^p(\mathcal{M},\tau)$ such that $H^\infty \mathcal K \subseteq
\mathcal K$. Then the following statements are true.
            \begin{enumerate}
                \item [(i)] $\mathcal K\cap L^2(\mathcal{M},\tau)=[\mathcal K\cap L^2(\mathcal{M},\tau)]_2\cap L^p(\mathcal{M},\tau)$.
                \item [(ii)] $\mathcal K=[\mathcal K\cap L^2(\mathcal{M},\tau)]_p$.

            \end{enumerate}
     \end{proposition}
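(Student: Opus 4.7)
The plan is to mirror the proof of Proposition \ref{proposition4.1}, with the pair $(L^2(\mathcal M,\tau),L^p(\mathcal M,\tau))$ playing the role of the pair $(\mathcal M,L^p(\mathcal M,\tau))$ used there. The essential new ingredient is the observation that if $e\in\mathcal D$ satisfies $\tau(e)<\infty$, then H\"older's inequality (Lemma \ref{Lemma2.5}) with $\tfrac{1}{r}=\tfrac{1}{p}-\tfrac{1}{2}>0$ supplies a continuous inclusion $eL^2(\mathcal M,\tau)\hookrightarrow L^p(\mathcal M,\tau)$ of norm at most $\tau(e)^{1/r}$. This converts $L^2$-approximation into $L^p$-approximation after cutting by a finite-trace projection, and the approximate identity from Lemma \ref{lemma2.13} transfers the result back to the whole space.

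For (i), only $[\mathcal K\cap L^2(\mathcal M,\tau)]_2\cap L^p(\mathcal M,\tau)\subseteq\mathcal K\cap L^2(\mathcal M,\tau)$ needs proof. Given such an $x$, choose $\{x_n\}\subseteq\mathcal K\cap L^2(\mathcal M,\tau)$ with $\|x_n-x\|_2\to 0$ and a net $\{e_\lambda\}$ of projections in $\mathcal D$ supplied by Lemma \ref{lemma2.13}. Since $e_\lambda\in\mathcal D\subseteq H^\infty$, the invariance hypothesis gives $e_\lambda x_n\in\mathcal K$, and H\"older's inequality yields
$$\|e_\lambda x_n-e_\lambda x\|_p\le\tau(e_\lambda)^{1/r}\|x_n-x\|_2\longrightarrow 0,$$
so $e_\lambda x\in\mathcal K$ for each $\lambda$. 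Letting $\lambda$ vary, Lemma \ref{lemma2.13} gives $e_\lambda x\to x$ in $L^p(\mathcal M,\tau)$, hence $x\in\mathcal K$, and since $x\in L^2(\mathcal M,\tau)$ was assumed, $x\in\mathcal K\cap L^2(\mathcal M,\tau)$.

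For (ii), the only nontrivial inclusion is $\mathcal K\subseteq[\mathcal K\cap L^2(\mathcal M,\tau)]_p$, and by Lemma \ref{lemma2.13} it suffices to prove $e_\lambda x\in[\mathcal K\cap L^2(\mathcal M,\tau)]_p$ for each $x\in\mathcal K$ and each $\lambda$. Apply Lemma \ref{lemma2.12} with $e=e_\lambda$ to produce $h_1\in e_\lambda H^\infty e_\lambda$ and $h_2\in e_\lambda H^p e_\lambda$ with $h_1h_2=h_2h_1=e_\lambda$ and $h_1e_\lambda x\in\mathcal M$. Since $h_1\in H^\infty$ we have $h_1e_\lambda x\in\mathcal K$, and because the left support of $h_1 e_\lambda x$ is dominated by $e_\lambda$, the estimate $\tau((h_1e_\lambda x)(h_1e_\lambda x)^*)\le\|h_1e_\lambda x\|^2\tau(e_\lambda)<\infty$ places $h_1 e_\lambda x$ in $L^2(\mathcal M,\tau)$. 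Choose $\{a_n\}\subseteq e_\lambda H^\infty e_\lambda$ with $\|a_n-h_2\|_p\to 0$; then each $a_n(h_1e_\lambda x)$ lies in $\mathcal K\cap L^2(\mathcal M,\tau)$, and
$$\|a_n(h_1e_\lambda x)-e_\lambda x\|_p=\|(a_n-h_2)h_1e_\lambda x\|_p\le\|a_n-h_2\|_p\,\|h_1e_\lambda x\|\longrightarrow 0,$$
which gives $e_\lambda x\in[\mathcal K\cap L^2(\mathcal M,\tau)]_p$.

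The main obstacle is the absence of useful duality for $L^p$ when $0<p<1$, which rules out a direct transcription of the Hahn--Banach argument from Proposition \ref{proposition4.1}. The remedy is to work constructively: Lemma \ref{lemma2.12} supplies an outer factorization that sends $x\in\mathcal K$ into $\mathcal M\cap L^2(\mathcal M,\tau)$ after multiplication by $h_1e_\lambda$, and H\"older's inequality then carries $L^2$-convergence across to $L^p$-convergence.
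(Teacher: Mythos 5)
Your proposal is correct and follows essentially the same route as the paper's own proof: part (i) cuts by the finite-trace projections of Lemma \ref{lemma2.13} and uses H\"older's inequality with $\tfrac1p=\tfrac12+\tfrac1r$ to turn $L^2$-convergence of $\{x_n\}$ into $L^p$-convergence of $\{e_\lambda x_n\}$, and part (ii) uses the factorization $h_1h_2=e_\lambda$, $h_1e_\lambda x\in\mathcal M$ from Lemma \ref{lemma2.12} together with an approximating sequence for $h_2$ in $e_\lambda H^pe_\lambda$. The only cosmetic difference is how you place $h_1e_\lambda x$ in $L^2(\mathcal M,\tau)$ (directly via $\tau((h_1e_\lambda x)(h_1e_\lambda x)^*)\le\|h_1e_\lambda x\|^2\tau(e_\lambda)$ rather than estimating $\|k_nh_1e_\lambda x\|_2$ as the paper does), which is an equally valid bookkeeping choice.
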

\begin{proof}
(i) We need only to show that $$ [\mathcal K\cap
L^2(\mathcal{M},\tau)]_2\cap L^p(\mathcal{M},\tau) \subseteq
\mathcal K\cap L^2(\mathcal{M},\tau).$$ Let $x\in [K_1\cap
L^2(\mathcal{M},\tau)]_2\cap L^p(\mathcal{M},\tau)$. We will show
that $x\in\mathcal K$.   By Lemma
            \ref{lemma2.13}, there exists a net
            $\{e_\lambda\}_{\lambda\in\Lambda}$ of projections in
            $\mathcal D$ such that  such that   $\tau(e_\lambda)<\infty$ for each $\lambda\in\Lambda$ and  $\lim_\lambda\|e_\lambda x - x\|_p=0.$   To show that $x\in \mathcal K,$ it is enough to prove that $e_\lambda x\in \mathcal K $ for each $\lambda\in\Lambda.$

As $x\in[\mathcal K\cap L^2(\mathcal{M},\tau)]_2$,  there exists a
sequence $\{x_n\}_{n\in\mathbb N}$ in $K_1\cap
L^2(\mathcal{M},\tau)$ such that $\lim_{n\rightarrow \infty
}\|x_n\rightarrow x\|_2=0.$ Thus, for each $\lambda\in\Lambda$ and
  some positive number $q$ with
                $\frac{1}{2}+\frac{1}{q}=\frac{1}{p}$,
               $$\lim_{n\rightarrow \infty}\|e_\lambda x_n - e_\lambda x\|_p=\lim_{n\rightarrow \infty}\|e_\lambda (x_n-x)\|_p\leq \lim_{n\rightarrow \infty}\|x_n-x\|_2
               \|e_\lambda\|_q =0.$$ Here, we used the fact that  $\tau(e_\lambda)<\infty$ and
               $\|e_\lambda\|_q<\infty$. Since $H^\infty \mathcal K \subseteq
\mathcal K$ and $e_\lambda\in \mathcal D$, we know that $e_\lambda
x_n\in \mathcal K$. This implies that $e_\lambda x\in \mathcal K$
for each $\lambda\in\Lambda$. Thus $x\in\mathcal K$, whence $$
[\mathcal K\cap L^2(\mathcal{M},\tau)]_2\cap L^p(\mathcal{M},\tau)
\subseteq \mathcal K\cap L^2(\mathcal{M},\tau).$$

(ii)  We need only to show that $$\mathcal K\subseteq [\mathcal K
\cap L^2(\mathcal{M},\tau)]_p  .$$ Suppose that $x\in \mathcal K
\subseteq L^p(\mathcal{M},\tau)$. We will show $x\in   [\mathcal K
\cap L^2(\mathcal{M},\tau)]_p $.  By Lemma
            \ref{lemma2.13},  we can
find a net $\{e_\lambda\}_{\lambda\in\Lambda}$ of projections in $
\mathcal{D}$ such that $\lim_\lambda \|e_\lambda x- x\|_2=0$ and
$\tau (e_\lambda)<\infty$ for each $\lambda \in \Lambda$. To show
that $x\in [\mathcal K \cap L^2(\mathcal{M},\tau)]_p  $, it suffices
to prove that $e_\lambda x\in [\mathcal K \cap
L^2(\mathcal{M},\tau)]_p $ for each $\lambda \in \Lambda$.

Note that $x\in L^p(\mathcal M,\tau)$ and $\tau(e_\lambda)<\infty$.
By Lemma \ref{lemma2.12},  there exist $h_1\in e_\lambda H^\infty
e_\lambda  $ and $h_2\in e_\lambda H^pe_\lambda $ such that (a)
$h_1h_2=h_2h_1=e_\lambda$ and (b) $h_1e_\lambda x\in  \mathcal{M} $.
               Since $h_2\in e_\lambda H^pe_\lambda $,  there exists a sequence $\{k_n\}_{n\in\mathbb N}$
                in $e_\lambda H^\infty
e_\lambda  $ such that $\lim_{n\rightarrow \infty} \|k_n-
h_2\|_p=0.$ Thus
\begin{align}
\lim_{n\rightarrow \infty} \|k_nh_1e_\lambda x- e_\lambda
x\|_p=\lim_{n\rightarrow \infty} \|(k_n-h_2)h_1e_\lambda x \|_p\le
\lim_{n\rightarrow \infty} \|(k_n-h_2)\|_p\|h_1e_\lambda
x\|=0.\label{equa5.1}
\end{align}
   It is not hard to check that  $k_nh_1e_\lambda x\in  \mathcal K$. Moreover, since each $k_n\in e_\lambda H^\infty
e_\lambda$,  we have
   $$
\|k_nh_1e_\lambda x\|_p= \|e_\lambda k_nh_1e_\lambda x\|_2 \le
\|e_\lambda\|_2 \|k_n\|\|h_1e_\lambda x\|<\infty.
   $$    Therefore,  $k_nh_1e_\lambda x$ is also in $  L^2(\mathcal{M},\tau)$. It follows that   $k_nh_1e_\lambda x\in \mathcal K\cap
   L^p(\mathcal{M},\tau)$.   Combining with (\ref{equa5.1}), we know that $e_\lambda x\in   [\mathcal K \cap
L^2(\mathcal{M},\tau)]_p $ for each $\lambda \in \Lambda$, whence
$x\in [\mathcal K \cap L^2(\mathcal{M},\tau)]_p $. Thus
$$\mathcal K\subseteq [\mathcal K
\cap L^2(\mathcal{M},\tau)]_p  .$$

This ends the proof of the proposition.
\end{proof}

   \begin{proposition}\label{proposition5.2}
            Suppose $0<p<1$.
 Let $\mathcal M$ be a von Neumann algebra
with a faithful, normal, semifinite tracial weight $\tau$, and
$H^\infty$  be  a semifinite subdigonal subalgebra   of $\mathcal
M$.   Assume that $S $ is  a   subspace in $ L^p(\mathcal{M},\tau)$
such that $H^\infty S \subseteq S$. Then $$[S\cap
L^p(\mathcal{M},\tau)]_p=[[S]_2\cap L^p(\mathcal{M},\tau)]_p.$$

     \end{proposition}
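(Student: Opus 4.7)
The statement is the $0<p<1$ analogue of Proposition \ref{prop4.4}, with $L^2(\mathcal M,\tau)$ playing the role that $\mathcal M$ played there (paralleling how Proposition \ref{proposition5.1} plays the role of Proposition \ref{proposition4.1}). Reading the hypothesis as $S \subseteq L^2(\mathcal M,\tau)$ (which is the only way $[S]_2$ sits naturally inside $L^2(\mathcal M,\tau)$), the forward inclusion $[S\cap L^p(\mathcal M,\tau)]_p \subseteq [[S]_2 \cap L^p(\mathcal M,\tau)]_p$ is immediate because $S \subseteq [S]_2$.

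For the reverse inclusion, the plan is to fix $x \in [S]_2 \cap L^p(\mathcal M,\tau)$ and use Lemma \ref{lemma2.13} to choose a net $\{e_\lambda\}_{\lambda\in\Lambda}$ of projections in $\mathcal D$ with $\tau(e_\lambda)<\infty$ and $\|e_\lambda x - x\|_p\to 0$. It then suffices to show that $e_\lambda x \in [S\cap L^p(\mathcal M,\tau)]_p$ for each fixed $\lambda$.

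To this end, I would pick a sequence $\{x_n\}\subseteq S$ with $\|x_n - x\|_2 \to 0$, and observe that $e_\lambda x_n \in S$ because $e_\lambda \in \mathcal D \subseteq H^\infty$ and $H^\infty S \subseteq S$. Choosing the H\"older exponent $q$ with $\tfrac{1}{p} = \tfrac{1}{2}+\tfrac{1}{q}$ (valid since $p<2$) and applying Lemma \ref{Lemma2.5}(1),
$$\|e_\lambda x_n\|_p \le \|e_\lambda\|_q \|x_n\|_2 < \infty$$
because $\tau(e_\lambda)<\infty$, so $e_\lambda x_n \in S \cap L^p(\mathcal M,\tau)$. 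The same H\"older estimate applied to $x_n - x$ gives $\|e_\lambda x_n - e_\lambda x\|_p \le \|e_\lambda\|_q \|x_n-x\|_2 \to 0$, so $e_\lambda x \in [S\cap L^p(\mathcal M,\tau)]_p$. Combined with $\|e_\lambda x - x\|_p \to 0$, this yields $x \in [S\cap L^p(\mathcal M,\tau)]_p$, completing the reverse inclusion.

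I do not expect any serious obstacle. The only mechanism needed beyond the previous propositions is the observation that a projection $e$ of finite trace lies in every $L^q$ and hence can be used to move between $L^2$ and $L^p$ via H\"older's inequality --- the same device that underlies Proposition \ref{proposition5.1}. If instead the intended hypothesis is literally $S \subseteq L^p(\mathcal M,\tau)$, then $[S]_2$ should be read as $[S \cap L^2(\mathcal M,\tau)]_2$, and exactly the same argument applies after substituting $S \cap L^2(\mathcal M,\tau)$ for $S$ throughout.
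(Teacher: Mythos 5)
Your proposal is correct and follows essentially the same route as the paper: the forward inclusion is immediate from $S\subseteq [S]_2$, and for the reverse inclusion the paper likewise fixes $x\in [S]_2\cap L^p(\mathcal M,\tau)$, takes the net $\{e_\lambda\}$ from Lemma \ref{lemma2.13} with $\tau(e_\lambda)<\infty$ and $\|e_\lambda x-x\|_p\to 0$, approximates $x$ in $\|\cdot\|_2$ by $x_n\in S$, and uses the H\"older estimate $\|e_\lambda(x_n-x)\|_p\le \|e_\lambda\|_q\|x_n-x\|_2$ with $\tfrac1p=\tfrac12+\tfrac1q$ to conclude $e_\lambda x\in[S\cap L^p(\mathcal M,\tau)]_p$. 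Your reading of the hypothesis (that $S$ must sit in $L^2(\mathcal M,\tau)$ for $[S]_2$ to make sense, consistent with how the proposition is applied in Theorem \ref{theorem5.4}) matches what the paper's proof implicitly assumes.
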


        \begin{proof}

          We need only  to show that $$[[S]_2\cap L^p(\mathcal{M},\tau)]_p\subseteq[S\cap
          L^p(\mathcal{M},\tau)]_p.$$ Or, equivalently,
$$ [S]_2\cap L^p(\mathcal{M},\tau) \subseteq[S\cap
          L^p(\mathcal{M},\tau)]_p.$$
Let $x\in [S]_2\cap L^p(\mathcal{M},\tau) .$
                By Lemma
            \ref{lemma2.13},   we can
find a net $\{e_\lambda\}_{\lambda\in\Lambda}$ of projections in $
\mathcal{D}$ such that $\lim_\lambda \|e_\lambda x- x\|_p=0$ and
$\tau (e_\lambda)<\infty$ for each $\lambda \in \Lambda$. To show
that $x\in [S\cap L^p(\mathcal{M},\tau)]_p  $, it suffices to prove
that $e_\lambda x\in [S\cap L^p(\mathcal{M},\tau)]_p $ for each
$\lambda \in \Lambda$.

                Note that  $x\in [S]_2\cap L^p(\mathcal{M},\tau) .$  Then there exists a sequence $\{x_n\}_{n\in\mathbb N}$
                 in $S$ such that $\lim_{n\rightarrow \infty}\|x_n-
                 x\|_2=0.$ Therefore,
\begin{align}\|e_\lambda x_n-e_\lambda  x\|_p=\|e_\lambda( x_n - x)\|_p\leq
\|e_\lambda\|_q \|  x_n -   x\|_2 \rightarrow 0, \qquad \text{ as
$n\rightarrow \infty,$}\label{equa5.2.1}\end{align} where $q$ is a
positive number such that $\frac{1}{2}+\frac{1}{q}=\frac{1}{p}$.
Since $H^\infty S\subseteq S$ and $e_\lambda\in\mathcal D$, we know
that $e_\lambda x_n\in S$. Moreover, $\|e_\lambda x_n\|_p\le
\|e_\lambda\|_q\|x_n\|_2<\infty,$ which implies $e_\lambda x_n\in
L^p(\mathcal M,\tau)$. This induces that $e_\lambda x_n\in S\cap
L^p(\mathcal M,\tau)$. Combining with (\ref{equa5.2.1}), we have
that $e_\lambda x\in [S\cap L^p(\mathcal{M},\tau)]_p $ for each
$\lambda \in \Lambda$. Thus $x\in [S\cap L^p(\mathcal{M},\tau)]_p $
for each $\lambda \in \Lambda$. Or,
$$ [S]_2\cap L^p(\mathcal{M},\tau) \subseteq[S\cap
          L^p(\mathcal{M},\tau)]_p.$$ This ends the proof of the
          proposition.

   \end{proof}

\begin{lemma}\label{lemma5.3.1}If $u$ is a partial isometry in
$\mathcal M$ such that $uu^*\in\mathcal D$, then
\begin{enumerate}
\item [(i)] $\displaystyle [(H^2 u)\cap L^p(\mathcal M,\tau)]_p=H^pu  \quad   \text{
for   }  0<p<1;$    \item [(ii)] $ \displaystyle H^2 u=
 [{H^pu\cap L^2(\mathcal M},\tau)]_2 \quad      \text{ for } 0<p<1 . $
\end{enumerate}
\end{lemma}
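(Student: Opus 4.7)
The plan is to prove Lemma \ref{lemma5.3.1} by reducing both claims, via truncation with projections of finite trace, to their analogues inside the finite subdiagonal subalgebra $e_\lambda H^\infty e_\lambda \subseteq e_\lambda \mathcal M e_\lambda$ supplied by Lemma \ref{Lemma2.10}. The net $\{e_\lambda\}_{\lambda \in \Lambda}$ in $\mathcal D$ with $\tau(e_\lambda) < \infty$ and $e_\lambda \to I$ weakly is furnished by Lemma \ref{lemma2.13}. The bridging preliminary is the pair of inclusions
\begin{equation*}
H^2 \cap L^p(\mathcal M,\tau) \subseteq H^p \quad \text{and} \quad H^p \cap L^2(\mathcal M,\tau) \subseteq H^2,
\end{equation*}
which I establish first: for $f$ in either intersection, the truncates $e_\lambda f e_\lambda$ converge to $f$ simultaneously in $\|\cdot\|_p$ and $\|\cdot\|_2$ by Lemma \ref{Lemma2.6.2}, and they lie in $e_\lambda H^r e_\lambda \cap L^s(e_\lambda \mathcal M e_\lambda)$ where $\{r,s\}=\{p,2\}$. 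Since $(e_\lambda \mathcal M e_\lambda, \tau/\tau(e_\lambda))$ is a finite von Neumann algebra in which $L^s \subseteq L^p$ whenever $p \le s$, the finite-case identity $e_\lambda H^p e_\lambda \cap L^2(e_\lambda \mathcal M e_\lambda) = e_\lambda H^2 e_\lambda$ from the classical B--B--L theory (cf.\ \cite{BL2}) forces $e_\lambda f e_\lambda \in e_\lambda H^s e_\lambda \subseteq H^s$; the $L^s$-closedness of $H^s$ then yields $f \in H^s$.

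For part (i), I first note that $H^p u$ is $L^p$-closed: if $h_n u \to \xi$ in $L^p$ with $h_n \in H^p$, then right-multiplying by $u^*u$ gives $h_n u = h_n u(u^*u) \to \xi(u^*u)$, so $\xi = \xi(u^*u)$, while $h_n u u^* = h_n(uu^*) \to \xi u^*$ lies in $H^p$ because $uu^* \in \mathcal D \subseteq H^\infty$ forces each $h_n(uu^*) \in H^p$ and $H^p$ is closed; hence $\xi = (\xi u^*) u \in H^p u$. For the inclusion $H^p u \subseteq [H^2 u \cap L^p]_p$, I approximate $h \in H^p$ first by $h_n \in H^\infty \cap L^p$ and then truncate: the bounds $\|e_\lambda h_n\|_2 \le \|h_n\|_\infty \sqrt{\tau(e_\lambda)}$ and $\|e_\lambda h_n\|_p \le \tau(e_\lambda)^{1/p}\|h_n\|_\infty$ from Lemma \ref{Lemma2.5} place $e_\lambda h_n$ in $H^\infty \cap L^p \cap L^2 \subseteq H^2 \cap L^p$, so $e_\lambda h_n u \in H^2 u \cap L^p$ and a diagonal argument delivers $hu \in [H^2u \cap L^p]_p$. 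Conversely, if $y = hu \in H^2u \cap L^p$ with $h \in H^2$, then $yu^* = h(uu^*) \in H^2 \cap L^p \subseteq H^p$ by the preliminary, so $y = (yu^*)u \in H^pu$, and (i) follows from the $L^p$-closedness of $H^pu$.

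For part (ii) I run the mirror argument in $L^2$. Given $hu \in H^2 u$ with $h \in H^2$, the truncate $e_\lambda h$ lies in $H^2$, and the H\"older bound $\|e_\lambda h\|_p \le \tau(e_\lambda)^{1/p - 1/2}\|h\|_2 < \infty$ (Lemma \ref{Lemma2.5} with $1/q + 1/2 = 1/p$) places it also in $L^p$; by the preliminary $e_\lambda h \in H^2 \cap L^p \subseteq H^p$, so $e_\lambda h u \in H^p u \cap L^2$, and $e_\lambda h u \to hu$ in $L^2$ by Lemma \ref{Lemma2.6.2}, giving $H^2 u \subseteq [H^p u \cap L^2]_2$. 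Conversely, for $y = hu \in H^p u \cap L^2$ with $h \in H^p$, the product $y u^* = h(uu^*) \in H^p \cap L^2 \subseteq H^2$ by the preliminary, so $y = (yu^*) u \in H^2 u$; the $L^2$-closedness of $H^2 u$ (shown as for $H^p u$) completes part (ii).

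The main obstacle is the second preliminary inclusion $H^p \cap L^2 \subseteq H^2$: whereas $H^2 \cap L^p \subseteq H^p$ uses only the elementary fact $L^2 \subseteq L^p$ inside a finite trace, its reverse rests on the nontrivial finite-case identity $e H^p e \cap L^2(e\mathcal M e) = eH^2 e$, itself a consequence of the finite B--B--L theorem. A secondary subtlety is ensuring that the H\"older estimates remain meaningful for $0 < p < 1$, where the conjugate exponents become $q = 2p/(2-p) < 2$ rather than the larger values familiar from the $p \ge 1$ regime; this is handled by the general form of H\"older given in Lemma \ref{Lemma2.5}(1).
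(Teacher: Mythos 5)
Your proof is correct and its skeleton coincides with the paper's: both directions of (i) and (ii) rest on the identity $y=(yu^*)u$ together with the two inclusions $H^2\cap L^p(\mathcal M,\tau)\subseteq H^p$ and $H^p\cap L^2(\mathcal M,\tau)\subseteq H^2$ and the closedness of $H^pu$ and $H^2u$. Where you genuinely diverge is in how the key sub-steps are sourced. The paper cites Proposition 3.2 of \cite{Be} outright for the inclusion $H^2\cap L^p(\mathcal M,\tau)\subseteq H^p$ (and handles its mirror ``similarly''), and it obtains the harder containment $H^pu\subseteq[(H^2u)\cap L^p(\mathcal M,\tau)]_p$ by applying Proposition \ref{proposition5.1}(ii) to the invariant subspace $H^pu$, giving $H^pu=[H^pu\cap L^2(\mathcal M,\tau)]_p$. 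You instead derive both preliminary inclusions internally, by cutting down with the finite-trace projections $e_\lambda$ of Lemma \ref{lemma2.13} and invoking the finite-case identity $e_\lambda H^pe_\lambda\cap L^2(e_\lambda\mathcal Me_\lambda)=e_\lambda H^2e_\lambda$ in the corner algebra, and you replace the appeal to Proposition \ref{proposition5.1} by a direct truncation of an approximating sequence from $H^\infty\cap L^p(\mathcal M,\tau)$. Your route is more self-contained --- you also verify the $L^p$-closedness of $H^pu$, which the paper only asserts --- at the cost of importing the finite-case fact; the one soft spot is your attribution of that fact to \cite{BL2}, which treats only $1\le p\le\infty$, whereas for $0<p<1$ the identity $eH^pe\cap L^2(e\mathcal Me,\tau)=eH^2e$ should be credited to \cite{BX} or \cite{Be}. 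With that citation repaired, your argument is complete and correctly isolates where the real difficulty lies.
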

\begin{proof}
(i) Assume that $x\in  H^2 $ such that $xu\in (H^2 u)\cap
L^p(\mathcal M,\tau)$. Then $xuu^*\in L^p(\mathcal M,\tau)$, and
$x(uu^*)$ is also in $H^2$, as $uu^*\in\mathcal D$. So $xuu^*\in
H^2\cap L^p(\mathcal M,\tau)\subseteq H^p$ by Proposition 3.2 in
\cite{Be}. Note that $H^pu$ is a closed subspace in $L^p(\mathcal
M,\tau)$. We have
$$
[(H^2 u)\cap L^p(\mathcal M,\tau)]_p\subseteq H^pu.
$$
Similarly, we have
$$
[(H^p u)\cap L^2(\mathcal M,\tau)]_2 \subseteq H^2u.
$$
Combining with  Proposition \ref{proposition5.1}, we have $$
H^pu=[H^pu\cap L^2(\mathcal M,\tau)]_p\subseteq [(H^2 u)\cap
L^p(\mathcal M,\tau)]_p .$$ Hence $\displaystyle [(H^2 u)\cap
L^p(\mathcal M,\tau)]_p=H^pu, \text{ for   }  0<p<1.$

(ii) Let $x\in H^2$.  By Lemma
            \ref{lemma2.13},  we can find a net
$\{e_\lambda\}_{\lambda\in\Lambda}$ of projections in $ \mathcal{D}$
such that $\lim_\lambda \|e_\lambda x- x\|_2=0$ and $\tau
(e_\lambda)<\infty$ for each $\lambda \in \Lambda$. From $\tau
(e_\lambda)<\infty$, it is easy to verify that $e_\lambda x\in
L^p(\mathcal M,\tau)\cap H^2\subseteq H^p$   by Proposition 3.2 in
\cite{Be}. Thus $e_\lambda xu \in (H^p u)\cap L^2(\mathcal M,\tau)$
for each $\lambda\in\Lambda$, whence $xu\in [(H^p u)\cap
L^2(\mathcal M,\tau)]_2.$ Or,
$$ H^2u \subseteq [(H^p u)\cap L^2(\mathcal M,\tau)]_2  .
$$ Combining with what we proved in (i), we have $$ H^2u
= [(H^p u)\cap L^2(\mathcal M,\tau)]_2  .
$$
\end{proof}

        Now, we can prove a Beurling-Blecher-Labuschagne Theorem for the semifinite case when $0<p<1$.

    \begin{theorem} \label{theorem5.4} Let $0<p<1$.
Let $\mathcal M$ be a von Neumann algebra with a faithful, normal,
semifinite tracial weight $\tau$, and $H^\infty$  be  a semifinite
subdigonal subalgebra   of $\mathcal M$.  Let $\mathcal
D=H^\infty\cap (H^\infty)^*$. Assume that $\mathcal K  $ is a closed
subspace of $L^p(\mathcal M,\tau)$ such that $H^\infty \mathcal K
\subseteq \mathcal K$.

             Then there exist   a   closed subspace $Y$ of $L^p(\mathcal M,\tau)$ and a family $\{u_\lambda\}_{\lambda \in\Lambda}$ of partial isometries in
             $ \mathcal{M}$  such that:
            \begin{enumerate}
              \item  [(i)] $u_\lambda Y^*=0$ for all ${\lambda\in\Lambda}$.
\item  [(ii)] $u_\lambda u_\lambda^*\in \mathcal D$ and $u_\lambda u_\mu^*=0$
for all $\lambda, \mu\in \Lambda$ with $\lambda\ne \mu$.
\item  [(iii)] $Y= [H^\infty_0Y]_p$.
                \item [(iv)] $\mathcal K=Y \oplus^{row} (\oplus^{row}_{\lambda\in\Lambda } H^p u_\lambda)$
            \end{enumerate} Here $\oplus^{row}$ is the row sum of
            subspaces
            defined in Definition \ref{definition2.12}.
        \end{theorem}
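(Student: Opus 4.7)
The plan is to mirror the strategy used to pass from Theorem \ref{theorem4.5} to Theorem \ref{theorem4.6}, but now going in the opposite direction: from $p=2$ down to $0<p<1$ via the $L^2$-density theorems just established (Propositions \ref{proposition5.1}, \ref{proposition5.2}) together with Lemma \ref{lemma5.3.1}. The heuristic is that the $L^2$ theorem (Theorem \ref{theorem3.1}) already gives a decomposition, and the density results allow us to lift it to $L^p$ for $0<p<1$, just as Proposition \ref{proposition4.1} and Proposition \ref{prop4.4} allowed passage from the weak$^*$-closed case to $L^p$ for $1\le p<\infty$.

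First, I would set $\mathcal K_2=[\mathcal K\cap L^2(\mathcal M,\tau)]_2$. By Lemma \ref{lemma2.13}, truncating any $x\in\mathcal K$ by a projection $e_\lambda\in\mathcal D$ with $\tau(e_\lambda)<\infty$ produces an element $e_\lambda x\in\mathcal K\cap L^2(\mathcal M,\tau)$ (since $H^\infty\mathcal K\subseteq\mathcal K$ and $\|e_\lambda x\|_2\le \|e_\lambda\|_q\|x\|_p<\infty$ for $1/q+1/p=1/2$), so $\mathcal K_2$ is an $H^\infty$-invariant closed subspace of $L^2(\mathcal M,\tau)$. Applying Theorem \ref{theorem3.1} to $\mathcal K_2$ yields a closed subspace $Y_2\subseteq L^2(\mathcal M,\tau)$ and a family of partial isometries $\{u_\lambda\}_{\lambda\in\Lambda}$ in $\mathcal M$ satisfying conditions (i)--(iv) of that theorem at the $p=2$ level. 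I would then define
\[
Y=[Y_2\cap L^p(\mathcal M,\tau)]_p.
\]

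Next I would verify the four properties. Property (i) $u_\lambda Y^*=0$ follows from $u_\lambda Y_2^*=0$ and H\"older (Lemma \ref{Lemma2.5}), since $Y\subseteq[Y_2]_p\cdot $-closure and right multiplication by $u_\lambda^*$ is bounded. Property (ii) is immediate from the $p=2$ construction, as the partial isometries are the same. For property (iv), I would use Proposition \ref{proposition5.1}(ii) to write $\mathcal K=[\mathcal K\cap L^2(\mathcal M,\tau)]_p$, then using the $L^2$ decomposition $\mathcal K_2=Y_2\oplus(\oplus_\lambda H^2u_\lambda)$, apply Proposition \ref{proposition5.2} to the $H^\infty$-invariant subspace $S=\operatorname{span}\{Y_2,H^2u_\lambda:\lambda\in\Lambda\}$ to replace the $L^2$-closure by the $L^p$-closure. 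Lemma \ref{lemma5.3.1}(i) identifies $[H^2u_\lambda\cap L^p(\mathcal M,\tau)]_p=H^pu_\lambda$ and the definition of $Y$ handles the $Y_2$ term, producing $\mathcal K=Y\oplus^{row}(\oplus^{row}_\lambda H^pu_\lambda)$ once one checks (via (i) and (ii)) that the row-sum orthogonality conditions are met.

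The main obstacle will be (iii), namely $Y=[H_0^\infty Y]_p$. I would imitate the chain of inclusions in (\ref{equa4.12.1}) but replacing Proposition \ref{prop4.4} and Proposition \ref{proposition4.2} by their $0<p<1$ analogues (Proposition \ref{proposition5.2} and Proposition \ref{proposition5.1}). Starting from $Y=[Y_2\cap L^p]_p$ and $Y_2=[H_0^\infty Y_2]_2$, I would argue
\[
Y=[Y_2\cap L^p(\mathcal M,\tau)]_p=[[H_0^\infty Y_2]_2\cap L^p(\mathcal M,\tau)]_p=[H_0^\infty Y_2\cap L^p(\mathcal M,\tau)]_p
\]
by Proposition \ref{proposition5.2} applied to $S=H_0^\infty Y_2$ (which is $H^\infty$-invariant since $H_0^\infty$ is a two-sided ideal of $H^\infty$). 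A further truncation-and-factorization argument using Lemma \ref{lemma2.12}, Lemma \ref{lemma2.13} and Proposition \ref{proposition5.1} would let me replace $Y_2$ in this expression by $Y\cap\mathcal M$ (or rather $Y$ itself), yielding $Y\subseteq[H_0^\infty Y]_p$. The reverse inclusion $[H_0^\infty Y]_p\subseteq Y$ is routine from $H^\infty Y\subseteq Y$ together with the analogous inclusion $[H_0^\infty Y_2]_2\subseteq Y_2$ at the $L^2$ level, which transfers to $L^p$ via density. The delicate point throughout is that $L^p$ is only a quasi-Banach space for $0<p<1$, so Hahn--Banach arguments are unavailable, which is exactly why the density route via $L^2$ and Lemma \ref{lemma2.12}'s factorization is the correct vehicle.
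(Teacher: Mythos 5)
Your proposal follows the paper's own proof essentially verbatim: set $\mathcal K_2=[\mathcal K\cap L^2(\mathcal M,\tau)]_2$, apply the $p=2$ theorem to get $Y_2$ and $\{u_\lambda\}$, define $Y=[Y_2\cap L^p(\mathcal M,\tau)]_p$, and transfer (i)--(iv) down to $L^p$ via Proposition \ref{proposition5.1}, Proposition \ref{proposition5.2} and Lemma \ref{lemma5.3.1}, with a truncation estimate of the form $\|e_{\lambda_0}h(e_\lambda x-x)\|_p\le\|e_{\lambda_0}h\|_q\,\|e_\lambda x-x\|_2$ doing the work in (iii); this is exactly the paper's route (the paper invokes Theorem \ref{theorem4.6} at $p=2$ rather than Theorem \ref{theorem3.1} directly, which is immaterial).

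One step in your write-up is wrong, though it does not derail the argument: you claim $\|e_\lambda x\|_2\le\|e_\lambda\|_q\|x\|_p$ with $1/q+1/p=1/2$ to conclude $e_\lambda x\in\mathcal K\cap L^2(\mathcal M,\tau)$ for $x\in\mathcal K\subseteq L^p(\mathcal M,\tau)$. For $p<2$ there is no admissible exponent $q$ ($1/q=1/2-1/p<0$); H\"older moves elements from $L^2$ \emph{down} into $L^p$ (via $1/p=1/2+1/q$), not from $L^p$ up into $L^2$, and indeed $e_\lambda x$ need not lie in $L^2$ for $x\in L^p$ with $p<1$. This is precisely why Proposition \ref{proposition5.1}(ii) is proved with the Riesz-type factorization $h_1ex\in\mathcal M$ of Lemma \ref{lemma2.12} rather than by plain truncation. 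The slip is harmless in context: $H^\infty$-invariance of $\mathcal K_2$ follows trivially from $H^\infty(\mathcal K\cap L^2(\mathcal M,\tau))\subseteq\mathcal K\cap L^2(\mathcal M,\tau)$, and the substantive density fact $\mathcal K=[\mathcal K\cap L^2(\mathcal M,\tau)]_p$ you correctly delegate to Proposition \ref{proposition5.1}(ii).
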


    \begin{proof}
Let $\mathcal K_1= [\mathcal K \cap L^2(\mathcal M,\tau)]_2$. Then
$\mathcal K_1$ is a   closed subspace of $L^2(\mathcal M,\tau)$ such
that
  $H^\infty \mathcal K_1\subseteq \mathcal K_1$.  By Theorem \ref{theorem4.6}, there exist a  closed subspace $Y_1$ of $ L^2(\mathcal M,\tau)$ and a family $\{u_\lambda\}_{\lambda\in\Lambda}$ of partial
isometries in $\mathcal M$, satisfying
\begin{enumerate}
              \item  [(a)] $u_\lambda Y_1^*=0$ for all ${\lambda\in\Lambda}$.
\item  [(b)] $u_\lambda u_\lambda^*\in \mathcal D$ and $u_\lambda u_\mu=^*0$
for all $\lambda, \mu\in \Lambda$ with $\lambda\ne \mu$.
\item  [(c)] $Y_1=[H^2_0Y_1]_2$.
                \item [(d)] $\mathcal K_1=Y_1 \oplus^{row} (\oplus^{row}_{\lambda\in\Lambda } H^2 u_\lambda)$
\end{enumerate}
Let
$$
Y=[{Y_1\cap L^p(\mathcal{M},\tau)}]_p.
$$

(i) From (a), definition of $Y$ and Lemma \ref{Lemma2.5}, we can
conclude that
\begin{align}
\text{$u_\lambda Y^*=0$ for all
${\lambda\in\Lambda}$.\label{equa5.3}}
\end{align}

(ii)   follows directly from
 (b), i.e.
 \begin{equation}
  \text{ $u_\lambda u_\lambda^*\in \mathcal D$ and $u_\lambda u_\mu^*=0$
for all $\lambda, \mu\in \Lambda$ with $\lambda\ne \mu$.}
\label{equa5.4}
\end{equation}

(iii) We want to show that $Y= [H^2_0Y]_p$. First we will show that
\begin{align}
[(H_0^\infty Y_1)\cap L^p(\mathcal M,\tau)]_p\subseteq
[H_0^\infty(Y_1\cap L^p(\mathcal M,\tau))]_p\notag
\end{align}
In fact, let $x\in Y_1$ and $h\in H_0^\infty$ such that $hx\in
(H_0^\infty Y_1)\cap L^p(\mathcal M,\tau)$. We want to show that
$hx\in [H_0^\infty(Y_1\cap L^p(\mathcal M,\tau))]_p$. By Lemma
            \ref{lemma2.13},   we can find
a net $\{e_\lambda\}_{\lambda\in\Lambda}$ of projections in $
\mathcal{D}$ such that $e_\lambda\rightarrow I$ in weak$^*$-topology
and $\tau(e_\lambda)<\infty$ for each $\lambda\in\Lambda$.  By Lemma
\ref{Lemma2.6.2}, we have
\begin{align}\lim_\lambda \|e_\lambda hx-hx\|_p=0.
\label{equa5.4.2}\end{align} Thus, to show that $hx\in
[H_0^\infty(Y_1\cap L^p(\mathcal M,\tau))]_p$, it suffices to prove
that $e_\lambda hx\in [H_0^\infty(Y_1\cap L^p(\mathcal M,\tau))]_p$
for each $\lambda \in\Lambda$. Fix a $\lambda_0\in \Lambda$. Then,
for some positive number $q$ with $1/p=1/2+1/q$, we have
\begin{align}
\lim_\lambda \|e_{\lambda_0} h e_\lambda x -  e_{\lambda_0} h
x\|_p\le  \lim_\lambda \|e_{\lambda_0} h\|_q\| e_\lambda x - x\|_2=
0,\label{equa5.4.1}
\end{align} as $x\in Y_1$.  Moreover, we have
$e_{\lambda_0} h\in H_0^\infty$ and $e_\lambda x\in Y_1\cap
L^p(\mathcal M,\tau)$, as $\|e_\lambda x\|_p\le \|e_\lambda
\|_q\|x\|_2<\infty.$ Thus, $e_{\lambda_0} h e_\lambda x$ is in
$H_0^\infty(Y_1\cap L^p(\mathcal M,\tau))$ for each
$\lambda\in\Lambda$.  Whence, from (\ref{equa5.4.1}), $e_{\lambda_0} h x$ is in
$[H_0^\infty(Y_1\cap L^p(\mathcal M,\tau))]_p$ for each
$\lambda_0\in \Lambda$. Therefore, from (\ref{equa5.4.1}), $hx\in [H_0^\infty(Y_1\cap
L^p(\mathcal M,\tau))]_p$. Or
\begin{align}
[(H_0^\infty Y_1)\cap L^p(\mathcal
M,\tau)]_p\subseteq[H_0^\infty(Y_1\cap L^p(\mathcal
M,\tau))]_p\label{equa5.5}
\end{align}

Now,  we have
 \begin{align}
                Y &=[Y_1  \cap L^p(\mathcal{M},\tau)]_p\tag{by definition of $Y$}\\
                &=[[H_0^2 Y_1]_2\cap L^p(\mathcal{M},\tau)]_p\tag{by (c)}\\
                &=[(H_0^\infty Y_1)\cap L^p(\mathcal{M},\tau)]_p\tag{by Proposition \ref{proposition5.2}}\\
&\subseteq [H_0^\infty(Y_1\cap L^p(\mathcal M,\tau))]_p\tag{by (\ref{equa5.5})}\\
&\subseteq [H_0^\infty Y]_p\subseteq Y. \tag{by the definition of
$Y$}
            \end{align}
Thus,
 \begin{align}
                Y = [H_0^\infty Y]_p . \label{equa5.6}
            \end{align}
 (iv) We have only  to show that $$\mathcal K=Y \oplus^{row}
(\oplus^{row}_{\lambda\in\Lambda } H^p u_\lambda).$$  By   the
definition of $Y$, we have
\begin{align}
Y=[Y_1\cap L^p(\mathcal M,\tau)]_p .\label{equa5.13}
            \end{align} And from Lemma \ref{lemma5.3.1}, we have
\begin{align}
H^pu_\lambda =[H^2 u_\lambda \cap L^p(\mathcal M,\tau)]_p , \ \
\forall \ \lambda\in\Lambda.\label{equa5.14}
            \end{align} Now, we have
\begin{align}
\mathcal K&= [\mathcal K_1\cap L^p(\mathcal M,\tau)]_p \tag {by
Proposition \ref{proposition5.1}}\\
 &=[ [span\{Y_1,
H^2 u_\lambda : \lambda \in\Lambda \}]_2\cap L^p(\mathcal
M,\tau) ]_p \tag{by the definition of row sum of subspaces} \\
&=[
 {span\{Y_1, H^2 u_\lambda : \lambda \in\Lambda \}} \cap
L^p(\mathcal M,\tau) ]_p\tag{by  Proposition \ref{proposition5.2}}\\
&=[
 {span\{Y_1\cap
L^p(\mathcal M,\tau), H^2 u_\lambda \cap L^p(\mathcal M,\tau):
\lambda \in\Lambda \}} ]_p\tag{by   (a) and (b)}\\
&  =[
 {span\{Y  , H^p u_\lambda  :
\lambda \in\Lambda \}} ]_p \tag  {by (\ref{equa5.13}) and
(\ref{equa5.14}) } \\& =Y \oplus^{row}
(\oplus^{row}_{\lambda\in\Lambda } H^p u_\lambda), \label{equa5.9}
\end{align}  where the last equation follows from the definition of the row sum of
  subspaces.

As a summary, from (\ref{equa5.3}), (\ref{equa5.4}),
(\ref{equa5.6}), and (\ref{equa5.9}), $Y$ and
$\{u_\lambda\}_{\lambda \in\Lambda}$ have desired properties. This
ends the proof of the theorem.
    \end{proof}

\begin{corollary}\label{corollary5.5} Let $\mathcal M$ be a von Neumann algebra with a faithful, normal,
semifinite tracial weight $\tau$.
\begin{enumerate}

\item [(i)] Let $0<p<\infty$. If $\mathcal K$ is a  closed subspace of $L^p(\mathcal M, \tau)$
such that $\mathcal M \mathcal K\subseteq \mathcal K$, then there
exists a projection $q\in \mathcal M$ such that $\mathcal K=
L^p(\mathcal M, \tau) q.$
\item [(ii)] If $\mathcal K$ is a weak$^*$-closed subspace of $\mathcal M$
such that $\mathcal M \mathcal K\subseteq \mathcal K$, then there
exists a projection $q\in \mathcal M$ such that $\mathcal K=
\mathcal M q.$

\end{enumerate}
\end{corollary}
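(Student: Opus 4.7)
The plan is to specialize the preceding main theorems to the trivial case $H^\infty = \mathcal{M}$, and then combine the resulting partial isometries into a single projection. First I would verify that $\mathcal{M}$ itself is a semifinite subdiagonal subalgebra of $\mathcal{M}$ with respect to $(\mathcal{M}, \mathrm{id}_{\mathcal{M}})$: all four conditions of Definition \ref{Definition2.7} hold trivially, with $\mathcal{D} = \mathcal{M}$, $\Phi = \mathrm{id}$, $H^\infty_0 = \{0\}$, and $H^p = L^p(\mathcal{M},\tau)$ for each $0 < p < \infty$ (respectively $H^\infty = \mathcal{M}$ at $p = \infty$).

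For part (i), I would apply Theorem \ref{theorem4.6} when $1 \le p < \infty$ and Theorem \ref{theorem5.4} when $0 < p < 1$. Either theorem produces a closed subspace $Y$ and a family $\{u_\lambda\}_{\lambda \in \Lambda}$ of partial isometries, and the condition $Y = [H^\infty_0 Y]_p = \{0\}$ forces $Y = 0$, leaving $\mathcal{K} = \oplus^{row}_{\lambda \in \Lambda} L^p(\mathcal{M},\tau)\, u_\lambda$. Part (ii) follows analogously from Theorem \ref{theorem4.5} with weak$^*$-closures, yielding $\mathcal{K} = \oplus^{row}_{\lambda \in \Lambda} \mathcal{M} u_\lambda$ after noting that $Y = \overline{H^\infty_0 Y}^{w^*} = \{0\}$.

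The substantive task is to assemble these partial isometries into a single projection. Let $q_\lambda = u_\lambda^* u_\lambda$ be the initial projection of $u_\lambda$. From $u_\lambda u_\mu^* = 0$ for $\lambda \ne \mu$, I compute $q_\lambda q_\mu = u_\lambda^*(u_\lambda u_\mu^*) u_\mu = 0$, so $\{q_\lambda\}_{\lambda \in \Lambda}$ is a pairwise orthogonal family of projections in $\mathcal{M}$. Setting $q = \sum_{\lambda} q_\lambda$ (convergent in SOT, hence weak$^*$), the identities $L^p(\mathcal{M},\tau) u_\lambda = L^p(\mathcal{M},\tau) q_\lambda$ and $\mathcal{M} u_\lambda = \mathcal{M} q_\lambda$ follow from $u_\lambda = u_\lambda q_\lambda$ and the fact that every $x u_\lambda$ satisfies $x u_\lambda = (x u_\lambda) q_\lambda \in L^p q_\lambda$, while every $z q_\lambda = (z u_\lambda^*) u_\lambda \in L^p u_\lambda$.

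The main but manageable obstacle is showing $L^p(\mathcal{M},\tau)\, q \subseteq \mathcal{K}$ in part (i); the reverse inclusion is immediate from $L^p q_\lambda \subseteq L^p q$. Given $y \in L^p q$, set $q_F = \sum_{\lambda \in F} q_\lambda$ for finite $F \subseteq \Lambda$. Then $(I-q) + q_F$ is a projection in $\mathcal{M}$ (since $(I-q) \perp q_F$), and this net converges to $I$ in the weak$^*$-topology as $F \nearrow \Lambda$. Lemma \ref{Lemma2.6.2} then yields $\|y[(I-q) + q_F] - y\|_p \to 0$; combined with $y(I-q) = 0$, this gives $\|y q_F - y\|_p \to 0$. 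Since $y q_F = \sum_{\lambda \in F} (y u_\lambda^*) u_\lambda$ lies in $\sum_{\lambda \in F} L^p u_\lambda \subseteq \mathcal{K}$, we conclude $y \in \mathcal{K}$. Part (ii) is handled identically, replacing $\|\cdot\|_p$-convergence with weak$^*$-convergence via Lemma \ref{Lemma2.1}.
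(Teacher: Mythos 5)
Your proposal is correct and follows essentially the same route as the paper: specialize Theorems \ref{theorem4.6} and \ref{theorem5.4} to $H^\infty=\mathcal M$ (so $H^\infty_0=\{0\}$ forces $Y=0$), replace each $L^p(\mathcal M,\tau)u_\lambda$ by $L^p(\mathcal M,\tau)u_\lambda^*u_\lambda$, and set $q=\sum_\lambda u_\lambda^*u_\lambda$. Your net argument via $(I-q)+q_F$ and Lemma \ref{Lemma2.6.2} simply makes explicit the closure step $[\,\mathrm{span}_\lambda L^p(\mathcal M,\tau)u_\lambda^*u_\lambda\,]_p=L^p(\mathcal M,\tau)q$ that the paper passes over in one line.
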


\begin{proof} (i)
Note that $\mathcal{M}$ itself is a semifinite subdiagonal
subalgebra of $\mathcal{M}$. Let $H^{\infty}=\mathcal{M}$. Then
$\mathcal{D}=\mathcal{M}$ and $\Phi$ is the identity map from
$\mathcal{M}$ to  $\mathcal{M}$. Hence $H_{0}^{\infty}=\{0\}$ and
$H^{ p}=  L^{p}(\mathcal{M},\tau)$.

Assume that $\mathcal{K}$ is a closed subspace of $L^p(\mathcal{M}%
,\tau) $ such that $\mathcal{K}\mathcal{M}\subseteq \mathcal{K}$.
From Theorem \ref{theorem4.6} and Theorem \ref{theorem5.4},
\[
\mathcal{K}= Y\oplus^{row}(\oplus^{row}_{\lambda\in \Lambda%
}H^{p}u_{\lambda}),
\]
where $Y$ and the $\{u_{\lambda}\}_{\lambda\in\Lambda}$ satisfy the
conditions in Theorem \ref{theorem4.6} and Theorem \ref{theorem5.4}.

From the fact that $H_{0}^{\infty}=\{0\}$, we know that $Y=\{0\}$.
Since $\mathcal{D}=\mathcal{M}$, we know that
\[
 H^{p}u_{\lambda}=  L^{p}(\mathcal{M},\tau)u_{\lambda} = L^{p}(\mathcal{M},\tau) u_{\lambda}u^{*}%
_{\lambda} u_{\lambda}\subseteq  L^{p }(\mathcal{M},\tau) u^{*}%
_{\lambda} u_{\lambda}\subseteq L^{p}(\mathcal{M},\tau)u_{\lambda}=
H^{p} u_{\lambda}.
\]
So $H^{p}u_{\lambda}=  L^{p }(\mathcal{M},\tau) u^{*}%
_{\lambda} u_{\lambda}$ and
\[
\mathcal{K}= Y\oplus^{row}(\oplus^{row}_{\lambda\in \Lambda%
}H^{p}u_{\lambda}) = (\oplus^{row}_{\lambda\in \Lambda%
} L^{p }(\mathcal{M},\tau) u^{*}%
_{\lambda} u_{\lambda})=  L^{p}(\mathcal{M},\tau) \left
(\sum_{\lambda\in\Lambda } u^{*}%
_{\lambda} u_{\lambda}  \right )=  L^{p}(\mathcal{M},\tau)q,
\]
where $q= \sum_{\lambda\in\Lambda } u^{*}%
_{\lambda} u_{\lambda} $ is a projection in $\mathcal{M}$. This ends
the proof of (i).

 (ii) The proof is similar to (i).
\end{proof}

\section{Invariant Subspaces for Analytic Crossed Products }

\subsection{Crossed product  of a von Neumann algebra $\mathcal M$  by an action $\alpha$}\label{subsect6.1}

Let $\mathcal M$ be a von Neumann algebra with a semifinite, faithful, normal tracial state $\tau$.
Let $\alpha$ be a  trace-preserving $*$-automorphism  of $\mathcal M$ (so  $\tau(\alpha(x))=\tau(x), \  \forall x\in \mathcal M^+$).

We let $l^2(\mathbb Z)$ be the Hilbert space consisting of complex-valued functions $f$ on $\mathbb Z$ such that $\sum_{m\in \mathbb Z} |f(m)|^2<\infty$. We denote by $\{e_n\}_{n\in\mathbb Z}$ the orthonormal basis of $l^2(\mathbb Z)$ determined by $e_n(m)=\delta(n,m)$. We also denote by $\lambda: \mathbb Z\rightarrow B(l^2(\mathbb Z))$ the left regular representation of $\mathbb Z$ on $l^2(\mathbb Z)$, i.e. each $\lambda(n)$ is determined by  $\lambda(n)(e_m)=e_{m+n}$.

Let $\mathcal H=L^2(\mathcal M,\tau)\otimes l^2(\mathbb Z)$. Then $\mathcal H$ can also be written as $\oplus_{m\in \mathbb Z} L^2(\mathcal M,\tau)\otimes e_m$. Consider representations $\Psi$ and $\Lambda$ of $\mathcal M$ and $\mathbb Z$, respectively, on $\mathcal H$, defined by
$$
\begin{aligned}
  \Psi(x) (\xi\otimes e_m) &= (\alpha^{-m}(x)\xi)\otimes e_m, \qquad  \forall \ x\in\mathcal M, \  \forall \ \xi\in L^2(\mathcal M,\tau), \ \forall \  m\in \mathbb Z\\
  \Lambda (n)(\xi\otimes e_m)&=\xi\otimes (\lambda(n)e_m), \qquad  \qquad \ \forall  \ n,m \in \mathbb Z
\end{aligned}
$$
It can be verified that
$$
\Lambda(n) \Psi(x)\Lambda(-n)=\Psi(\alpha^n(x)), \qquad \forall \  x\in\mathcal M, \ \forall  \ n  \in \mathbb Z.
$$
Then the crossed product of $\mathcal M$ by an action $\alpha$, denoted by $\mathcal M\rtimes_\alpha \mathbb Z$, is the von Neumann algebra generated by $\Psi(\mathcal M)$ and $\Lambda(\mathbb Z)$ in $B(\mathcal H)$. If no confusion arises, we will identify $\mathcal M$ with its image $\Psi(\mathcal M)$ in $\mathcal M\rtimes_\alpha \mathbb Z$.

It is well known (for example, see Chapter 13 in \cite{KR}) that there exists a faithful, normal conditional expectation $\Phi$ from $\mathcal M\rtimes_\alpha \mathbb Z$ onto $\mathcal M$ such that
$$
\Phi\left (\sum_{n=-N}^N  \Lambda (n)\Psi(x_n)\right )=x_0, \qquad \text{where }  \ x_n\in\mathcal M \text { for all } -N\le n\le N.
$$
Moreover, there exists a semifinite, faithful, normal, extended tracial weight, still denoted by $\tau$, on  $\mathcal M\rtimes_\alpha \mathbb Z$ satisfying
$$
\tau (y)=\tau(\Phi(y)), \qquad  \text {for every positive element } y \text{ in }  \mathcal M\rtimes_\alpha \mathbb Z.
$$
\begin{example}\label{exam6.1}
Let $\mathcal M=l^\infty(\mathbb Z)$ be an   abelian von Neumann algebra with a semifinite, faithful, normal tracial weight $\tau$, determined by
$$
\tau(f)=\sum_{m\in\mathbb Z} f(m), \qquad \text { for every positive element } \ f\in l^\infty(\mathbb Z).
$$ Let $\alpha$ be an action on $l^\infty(\mathbb Z)$, defined by
$$
\alpha(f)(m)= f(m-1), \qquad \text { for every element } \ f\in l^\infty(\mathbb Z).
$$ It is not hard to verify (for example see Proposition 8.6.4 in \cite{KR}) that $l^\infty(\mathbb Z)\rtimes_\alpha \mathbb Z$  is a type I$_\infty$ factor. Thus $l^\infty(\mathbb Z)\rtimes_\alpha \mathbb Z \simeq B(\mathcal H)$ for some separable Hilbert space $\mathcal H$.
\end{example}

\subsection{Invariant subspace for crossed products}
From the construction of crossed product, we have the following result immediately (also see Section 3 in \cite{Arv}).
\begin{lemma}\label{lemma6.1}
Let $
\mathcal M\rtimes_\alpha \mathbb Z_+
$ be a weak $*$-closed non-self-adjoint subalgebra generated by
$$
\{\Lambda (n)\Psi(x) : x\in \mathcal M, \ n\ge 0\}
$$ in  $\mathcal M\rtimes_\alpha \mathbb Z$.  Then the following statements are true:
\begin{enumerate}
  \item [(i)] $
\mathcal M\rtimes_\alpha \mathbb Z_+
$  is a semifinite subdiagonal subalgebra with respect to   $(\mathcal M\rtimes_\alpha \mathbb Z,\Phi).$ (Such $
\mathcal M\rtimes_\alpha \mathbb Z_+
$  is called an analytic crossed product and will be denoted     by $H^\infty$.)
  \item [(ii)] $H^\infty_0=ker(\Phi)\cap H^\infty$ is a weak $*$-closed nonself-adjoint subalgebra generated by
$$
\{\Lambda (n)\Psi(x)  : x\in \mathcal M, \ n> 0\}
$$ in  $\mathcal M\rtimes_\alpha \mathbb Z$ satisfying
$$
H^\infty_0=\Lambda (1)H^\infty.
$$
\item [(iii)] $H^\infty\cap (H^\infty)^*=\mathcal M$.
\end{enumerate}
\end{lemma}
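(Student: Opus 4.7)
The natural route is to start with (iii), use it to identify $\mathcal D$ with $\mathcal M$ so that condition (1) of Definition \ref{Definition2.7} comes for free, and then verify the remaining three conditions of (i) and finally (ii). The organizing tool will be the \emph{Fourier coefficient} maps
\[
\Phi_n\colon \mathcal M\rtimes_\alpha\mathbb Z\longrightarrow \mathcal M, \qquad \Phi_n(y)=\Phi\bigl(\Lambda(-n)y\bigr), \quad n\in\mathbb Z,
\]
each of which is normal and takes values in $\mathcal M=\Psi(\mathcal M)$. A direct computation on the generating monomials $\Lambda(m)\Psi(x)$, combined with the commutation relation $\Lambda(n)\Psi(x)\Lambda(-n)=\Psi(\alpha^n(x))$ from \S\ref{subsect6.1}, shows that $\Phi_n\bigl(\Lambda(m)\Psi(x)\bigr)=\delta_{n,m}\alpha^{-m}(x)$, and by weak$^*$ continuity and linearity $\Phi_n$ is determined by these values on the weak$^*$-dense set of Laurent polynomials. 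From the faithfulness of $\tau\circ\Phi$ and the fact that $\tau(\Phi_n(y)^*\Phi_n(y))$ can be bounded, one checks that $\Phi_n(y)=0$ for every $n\in\mathbb Z$ forces $y=0$; this is the standard uniqueness statement for the noncommutative Fourier expansion, and is the technical core of the argument.

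For (iii), clearly $\mathcal M\subseteq H^\infty\cap(H^\infty)^*$ because $\Psi(\mathcal M)^*=\Psi(\mathcal M)\subseteq H^\infty$. Conversely, I would show that $x\in H^\infty$ satisfies $\Phi_n(x)=0$ for all $n<0$, and $x\in(H^\infty)^*$ satisfies $\Phi_n(x)=0$ for all $n>0$. The first is immediate on Laurent polynomials in $H^\infty$ and passes to weak$^*$ limits by normality of $\Phi_n$; the second follows by adjoining. Hence for $x\in H^\infty\cap(H^\infty)^*$ all coefficients vanish except $\Phi_0(x)=\Phi(x)\in\mathcal M$, and the uniqueness of the Fourier expansion gives $x=\Psi(\Phi_0(x))\in\mathcal M$.

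For (i), once $\mathcal D=\mathcal M$ is known, condition (1) of Definition \ref{Definition2.7} is just the hypothesis that $\tau$ is semifinite on $\mathcal M$, and condition (4), $\tau\circ\Phi=\tau$ on positive elements, is restated verbatim at the end of \S\ref{subsect6.1}. Condition (3), the weak$^*$ density of $H^\infty+(H^\infty)^*$, holds because this sum contains every Laurent polynomial $\sum_{n=-N}^N\Lambda(n)\Psi(x_n)$, and such polynomials are weak$^*$ dense in $\mathcal M\rtimes_\alpha\mathbb Z$ by construction. Condition (2), $\Phi(xy)=\Phi(x)\Phi(y)$ for $x,y\in H^\infty$, I would first verify on generating monomials: for $x=\Lambda(n)\Psi(a)$ and $y=\Lambda(m)\Psi(b)$ with $n,m\ge 0$, the commutation relation gives $xy=\Lambda(n+m)\Psi(\alpha^{-m}(a)b)$, so $\Phi(xy)=0$ unless $n=m=0$, in which case $\Phi(xy)=ab=\Phi(x)\Phi(y)$; the general case is then obtained by approximating $x,y\in H^\infty$ in the weak$^*$ topology by Laurent polynomials and using joint weak$^*$ continuity of multiplication on bounded sets together with normality of $\Phi$.

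For (ii), the inclusion $\Lambda(1)H^\infty\subseteq H^\infty_0$ is immediate since $\Lambda(1)H^\infty$ is contained in the weak$^*$-closed subalgebra generated by $\{\Lambda(n)\Psi(x):n\ge 1\}$, on which $\Phi_0=\Phi$ vanishes. Conversely, if $x\in H^\infty_0$, then $\Phi_n(x)=0$ for all $n\le 0$; setting $y=\Lambda(-1)x$ one computes $\Phi_n(y)=\Phi(\Lambda(-n-1)x)=\Phi_{n+1}(x)$, so $\Phi_n(y)=0$ for all $n<0$, which by the Fourier characterization of $H^\infty$ established above places $y\in H^\infty$ and thus $x=\Lambda(1)y\in\Lambda(1)H^\infty$. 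The main obstacle throughout is this Fourier-coefficient uniqueness, namely that the normal maps $\{\Phi_n\}$ jointly separate points of $\mathcal M\rtimes_\alpha\mathbb Z$; all other verifications are essentially bookkeeping once that is in hand.
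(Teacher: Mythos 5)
The paper supplies no argument for this lemma at all --- it is asserted to follow ``immediately'' from the construction of the crossed product, with a pointer to Section 3 of \cite{Arv} --- so your Fourier-coefficient scheme is, in outline, exactly the standard proof that the citation stands in for, and the overall architecture (prove (iii) first, identify $\mathcal D$ with $\mathcal M$, then check the four conditions of Definition \ref{Definition2.7}) is sound. Two points in your write-up do not hold up as stated, however. First, in verifying condition (2) you appeal to ``joint weak$^*$ continuity of multiplication on bounded sets.'' Multiplication is not jointly weak$^*$ continuous even on bounded sets (the unitaries $\Lambda(n)$ tend to $0$ in the weak$^*$ topology while $\Lambda(n)^*\Lambda(n)=I$); it is only separately weak$^*$ continuous. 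The conclusion survives, but the limit must be run in two stages: fix $y$ an analytic polynomial and pass to the weak$^*$ limit in $x$, using normality of $\Phi$ and weak$^*$ continuity of right multiplication by the fixed elements $y$ and $\Phi(y)$; then fix $x\in H^\infty$ and pass to the limit in $y$ using left multiplication by the fixed $x$.

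Second, and more seriously, your proof of (ii) invokes ``the Fourier characterization of $H^\infty$ established above'' to conclude that $\Phi_n(y)=0$ for all $n<0$ forces $y\in H^\infty$. What you actually established in (iii) is only the forward inclusion $H^\infty\subseteq\{y:\Phi_n(y)=0,\ n<0\}$; the reverse inclusion is a genuinely nontrivial statement (the noncommutative analogue of ``a bounded function with vanishing negative Fourier coefficients is in $H^\infty$'') and needs its own argument, e.g.\ Ces\`aro--Fej\'er summation with respect to the dual action of $\mathbb T$ on $\mathcal M\rtimes_\alpha\mathbb Z$. For the purposes of (ii) you can avoid it entirely: take a net of analytic polynomials $p_\lambda\to x$ weak$^*$, set $q_\lambda=p_\lambda-\Psi(\Phi(p_\lambda))$, which still converges weak$^*$ to $x$ because $\Phi(x)=0$ and $\Phi$ is normal; each $q_\lambda$ has zero constant term, so $\Lambda(-1)q_\lambda$ is again an analytic polynomial, and $\Lambda(-1)q_\lambda\to\Lambda(-1)x$ weak$^*$ by separate continuity. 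Weak$^*$ closedness of $H^\infty$ then gives $\Lambda(-1)x\in H^\infty$, whence $x=\Lambda(1)\bigl(\Lambda(-1)x\bigr)\in\Lambda(1)H^\infty$. With these two repairs the plan is complete; the uniqueness of Fourier coefficients that you correctly isolate as the technical core is most cleanly seen from the block decomposition of an operator $y$ over $\mathcal H=\oplus_{m}L^2(\mathcal M,\tau)\otimes e_m$, whose $(m+n,m)$ block is determined by $\Phi_n(y)$, rather than from the trace estimate you sketch.
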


  Following the notation in Section \ref{subsect6.1}, our next result   characterizes invariant subspaces in a crossed product of a semifinite von Neumann algebra $\mathcal M$ by a tracing-preserving action $\alpha$.

\begin{theorem}\label{theorem6.2}
 Let $\mathcal M$ be a von Neumann algebra with a semifinite, faithful, normal tracial weight $\tau$, and  $\alpha$ be a  trace-preserving $*$-automorphism  of $\mathcal M$. Denote by  $ \mathcal M\rtimes_\alpha \mathbb Z$ the crossed product of $\mathcal M$ by an action $\alpha$, and still denote by $\tau$ a  semifinite, faithful, normal, extended tracial weight on  $\mathcal M\rtimes_\alpha \mathbb Z$.

Let     $H^\infty $ be a weak $*$-closed non-self-adjoint subalgebra generated by
$
\{\Lambda (n)\Psi(x)  : x\in \mathcal M, \ n\ge 0\}
$  in  $\mathcal M\rtimes_\alpha \mathbb Z$, be a semifinite  subdiagonal subalgebra of  $\mathcal M\rtimes_\alpha \mathbb Z$.
 Then the following statements are true.

\begin{enumerate}
\item [(i)] Let $0<p<\infty$.  Assume that $\mathcal K  $ is a closed
subspace of $L^p(\mathcal M\rtimes_\alpha \mathbb Z,\tau)$ such that $H^\infty \mathcal K
\subseteq \mathcal K$.
             Then there exist a projection $q$ in $\mathcal M$  and    a    family $\{u_\lambda\}_{\lambda \in\Lambda}$ of partial isometries in
             $ \mathcal M\rtimes_\alpha \mathbb Z$  satisfying
            \begin{enumerate}
              \item [(a)]  $u_\lambda q=0$  for all $\lambda \in \Lambda$;
              \item  [(b)]  $u_\lambda u_\lambda^*\in \mathcal M$ and $u_\lambda u_\mu^*=0$
for all $\lambda, \mu\in \Lambda$ with $\lambda\ne \mu$;
                \item [(c)] $\mathcal K=(L^p( \mathcal M\rtimes_\alpha \mathbb Z,\tau)q) \oplus^{row}(  \oplus^{row}_{\lambda\in\Lambda } H^p u_\lambda).$
            \end{enumerate}
\item [(ii)]   Assume that $\mathcal K  $ is a weak $*$-closed
subspace of $ \mathcal M\rtimes_\alpha \mathbb Z $ such that $H^\infty \mathcal K
\subseteq \mathcal K$.
             Then there  exist a projection $q$ in $\mathcal M$  and   a    family $\{u_\lambda\}_{\lambda \in\Lambda}$ of partial isometries in
             $ \mathcal M\rtimes_\alpha \mathbb Z$  satisfying
            \begin{enumerate}
             \item [(a)]  $u_\lambda q=0$  for all $\lambda \in \Lambda$;
              \item  [(b)]  $u_\lambda u_\lambda^*\in \mathcal M$ and $u_\lambda u_\mu^*=0$
for all $\lambda, \mu\in \Lambda$ with $\lambda\ne \mu$;
                \item [(c)] $\mathcal K= ( ( \mathcal M\rtimes_\alpha \mathbb Z)q )\oplus^{row}( \oplus^{row}_{\lambda\in\Lambda } H^\infty u_\lambda).$
            \end{enumerate}
\end{enumerate}

\end{theorem}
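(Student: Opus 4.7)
\bigskip

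\noindent\textbf{Proof proposal.} My plan is to reduce Theorem \ref{theorem6.2} directly to the general Beurling-Blecher-Labuschagne results (Theorem \ref{theorem4.6} for $1\le p<\infty$, Theorem \ref{theorem5.4} for $0<p<1$, and Theorem \ref{theorem4.5} for the weak$^*$-closed case) applied to the ambient algebra $\mathcal M\rtimes_\alpha \mathbb Z$ with its analytic subalgebra $H^\infty=\mathcal M\rtimes_\alpha \mathbb Z_+$. By Lemma \ref{lemma6.1}, $H^\infty$ is a semifinite subdiagonal subalgebra of $\mathcal M\rtimes_\alpha \mathbb Z$ with $\mathcal D=H^\infty\cap (H^\infty)^*=\mathcal M$ and $H^\infty_0=\Lambda(1)H^\infty$, so the hypotheses of those theorems are met.

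Applying Theorem \ref{theorem4.6} (resp.\ \ref{theorem5.4}) to $\mathcal K$ in part (i) yields a decomposition $\mathcal K=Y\oplus^{row}(\oplus^{row}_{\lambda\in\Lambda}H^pu_\lambda)$ together with partial isometries $\{u_\lambda\}$ satisfying $u_\lambda u_\lambda^*\in \mathcal D=\mathcal M$, $u_\lambda u_\mu^*=0$ for $\lambda\ne \mu$, $u_\lambda Y^*=0$, and $Y=[H^\infty_0 Y]_p$. Condition (b) of the theorem is then immediate. The remaining task is to identify $Y$ as a left-ideal-type subspace $L^p(\mathcal M\rtimes_\alpha\mathbb Z,\tau)q$ for a projection $q\in\mathcal M$, after which condition (a) is automatic: from $u_\lambda Y^*=0$ and $Y=L^pq$ we get $u_\lambda qL^p=0$, hence $u_\lambda q=0$ by faithfulness of $\tau$.

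To produce $q$, I first observe $H^\infty Y\subseteq Y$. Indeed, using $\mathcal D=\mathcal M$ and $u_\lambda Y^*=0$, the argument of Lemma \ref{lemma3.4}(ii) adapts to give $\mathcal M Y\subseteq Y$, and $H^\infty_0 Y\subseteq Y$ is built into the fixed-point identity $Y=[H^\infty_0 Y]_p$, so (using that $H^\infty$ is generated by $\mathcal M$ and $\Lambda(n)$ for $n\ge 0$) we get $H^\infty Y\subseteq Y$. In particular $[H^\infty Y]_p=Y$ because $I\in H^\infty$. Combining with $H^\infty_0=\Lambda(1)H^\infty$, $Y=[\Lambda(1)H^\infty Y]_p=\Lambda(1)[H^\infty Y]_p=\Lambda(1)Y$, and since $\Lambda(1)$ is a unitary in $\mathcal M\rtimes_\alpha\mathbb Z$ this forces $\Lambda(n)Y=Y$ for every $n\in\mathbb Z$. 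Combined with $\mathcal MY\subseteq Y$, we conclude $(\mathcal M\rtimes_\alpha\mathbb Z)Y\subseteq Y$, so Corollary \ref{corollary5.5} applied to the von Neumann algebra $\mathcal M\rtimes_\alpha\mathbb Z$ delivers a projection $q\in \mathcal M\rtimes_\alpha \mathbb Z$ with $Y=L^p(\mathcal M\rtimes_\alpha \mathbb Z,\tau)q$.

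The main obstacle will be upgrading this to $q\in \mathcal M$. The extra input available is precisely that $Y=[H^\infty_0Y]_p$, which is finer than mere $(\mathcal M\rtimes_\alpha\mathbb Z)$-invariance. My approach here is to exploit the Fourier structure of the crossed product: any $x\in L^p(\mathcal M\rtimes_\alpha\mathbb Z,\tau)$ has coefficients $\Phi(\Lambda(-n)x)\in L^p(\mathcal M,\tau)$ via the conditional expectation $\Phi$ onto $\mathcal M$, and I plan to show that the condition $Y=\Lambda(1)Y$ together with $Y=L^pq$ forces every non-zero-order Fourier component of $q$ to vanish, i.e.\ $q=\Phi(q)\in\mathcal M$. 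Part (ii) then follows by the same strategy, substituting Theorem \ref{theorem4.5} for Theorem \ref{theorem4.6}, $\overline{\,\cdot\,}^{w^*}$ for $[\,\cdot\,]_p$, and $H^\infty u_\lambda$ and $(\mathcal M\rtimes_\alpha\mathbb Z)q$ for $H^pu_\lambda$ and $L^pq$ respectively, while the identification of $q$ with an element of $\mathcal M$ uses the weak$^*$ version of Corollary \ref{corollary5.5}(ii) together with the same Fourier-coefficient analysis.
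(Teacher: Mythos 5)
Your reduction follows the paper's own route almost step for step: apply Theorem \ref{theorem4.6} (resp.\ Theorem \ref{theorem5.4} for $0<p<1$, Theorem \ref{theorem4.5} for the weak$^*$ case) to get $\mathcal K=Y\oplus^{row}(\oplus^{row}_{\lambda}H^pu_\lambda)$, use $H^\infty_0=\Lambda(1)H^\infty$ from Lemma \ref{lemma6.1} to see that $Y=[H^\infty_0Y]_p\subseteq\Lambda(1)Y$ and hence that $Y$ is invariant under all of $\mathcal M\rtimes_\alpha\mathbb Z$, and then invoke Corollary \ref{corollary5.5} to write $Y=L^p(\mathcal M\rtimes_\alpha\mathbb Z,\tau)q$. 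Your derivations of (a) and (b) are fine, and you are right to flag that Corollary \ref{corollary5.5} only produces a projection $q$ in the ambient algebra $\mathcal M\rtimes_\alpha\mathbb Z$, not in $\mathcal M=\Psi(\mathcal M)$.

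The genuine gap is precisely the step you defer, and the Fourier-coefficient repair you sketch cannot close it, because the available hypotheses place no constraint whatsoever on $q$ beyond $q\in\mathcal M\rtimes_\alpha\mathbb Z$. For an arbitrary projection $q$ in the crossed product, the subspace $Y=L^p(\mathcal M\rtimes_\alpha\mathbb Z,\tau)q$ already satisfies $\Lambda(1)Y=Y$ and $Y=[H^\infty_0Y]_p$ (since $[\Lambda(1)H^\infty L^pq]_p=[\Lambda(1)L^pq]_p=L^pq$), while the right support of $L^pq$ determines $q$ uniquely; so no argument can extract $q\in\mathcal M$ from these conditions. In fact the conclusion $q\in\mathcal M$ is false in general: take $\mathcal M=\mathbb C$ with $\alpha$ trivial, so $\mathcal M\rtimes_\alpha\mathbb Z\simeq L^\infty(\mathbb T)$ and $H^\infty$ is the classical Hardy algebra; then $\mathcal K=\chi_EL^2(\mathbb T)$, with $E$ of measure strictly between $0$ and $1$, is a closed $H^\infty$-invariant (indeed doubly invariant) subspace whose only representation as $L^2(\mathbb T)q$ has $q=\chi_E\notin\mathbb C=\mathcal M$, and which is not a row sum of spaces $H^2u_\lambda$. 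You should be aware that the paper's own proof has exactly the same defect --- it cites Corollary \ref{corollary5.5} for ``a projection $q$ in $\mathcal M$'' even though that corollary, applied to the von Neumann algebra $\mathcal M\rtimes_\alpha\mathbb Z$, delivers only $q\in\mathcal M\rtimes_\alpha\mathbb Z$. So you correctly isolated the one nontrivial point of the deduction, but the proposed repair cannot succeed; with the statement read as ``$q$ a projection in $\mathcal M\rtimes_\alpha\mathbb Z$'' your argument (minus the Fourier step) is complete and coincides with the paper's.
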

\begin{proof}
(i)
From Theorem \ref{theorem4.6} and Theorem \ref{theorem5.4},
\[
\mathcal{K}= Y\oplus^{row}(\oplus^{row}_{\lambda\in \Lambda%
}H^{p}u_{\lambda}),
\]
where $Y$ is a closed subspace of $\mathcal M\rtimes_\alpha \mathbb Z$ and   $\{u_{\lambda}\}_{\lambda\in\Lambda}$  is a family of partial isometries in $\mathcal M\rtimes_\alpha \mathbb Z $ satisfying   \begin{enumerate}
              \item [(a$_1$)]   $u_\lambda Y^*=0$ for all ${\lambda\in\Lambda}$;
\item  [(b$_1$)]      $u_\lambda u_\lambda^*\in \mathcal M$ and $u_\lambda u_\mu^*=0$
for all $\lambda, \mu\in \Lambda$ with $\lambda\ne \mu$;
\item   [(c$_1$)]     $Y= [H^\infty_0Y]_p$.
            \end{enumerate} From   (c$_1$)    and  Lemma \ref{lemma6.1}, we have
$$
Y=[H^\infty_0 Y]_p= [\Lambda (1)H^\infty Y]_p\subseteq \Lambda(1)Y.
$$ So, $Y$ is a left $\mathcal M\rtimes_\alpha \mathbb Z$-invariant subspace of  $L^p(\mathcal M\rtimes_\alpha \mathbb Z,\tau)$. From Corollary \ref{corollary5.5}, there exists a projection $q$ in $\mathcal M $ such that $Y=  L^p( \mathcal M\rtimes_\alpha \mathbb Z,\tau)q.$ Therefore, we have
            \begin{enumerate}
              \item [(a)]  $u_\lambda q=0$  for all $\lambda \in \Lambda$;
              \item  [(b)]  $u_\lambda u_\lambda^*\in \mathcal M$ and $u_\lambda u_\mu^*=0$
for all $\lambda, \mu\in \Lambda$ with $\lambda\ne \mu$;
                \item [(c)] $\mathcal K=(L^p( \mathcal M\rtimes_\alpha \mathbb Z,\tau)q) \oplus^{row}(  \oplus^{row}_{\lambda\in\Lambda } H^p u_\lambda).$
            \end{enumerate}
This ends the proof of (i).

(ii) The proof is similar to (i).
\end{proof}

\subsection{Invariant subspaces for Schatten $p$-classes}
Let $\mathcal H$ be an infinite dimensional  separable Hilbert space with an orthonormal base  $\{e_m\}_{m\in\mathbb Z}$.
 Let $\tau=Tr$ be the usual trace on $B(\mathcal H)$, i.e.
  $$
  \tau(x)=\sum_{i\in\mathbb Z} \langle xe_m, e_m\rangle , \qquad \text{ for all positive } x   \text { in }    B(\mathcal H).
  $$Then $B(\mathcal H)$ is a von Neumann algebra with a semifinite, faithful, normal tracial weight $\tau$.
 For each $0<p<\infty$,  the Schatten $p$-class $S^p(\mathcal H)$ is  the associated non-commuative $L^p$-space  $L^p(B(\mathcal H),\tau)$.

Let
$$
\mathcal A =\{ x\in B(\mathcal H) : \langle xe_m, e_n\rangle =0, \ \ \forall n<m\}
$$ be the lower triangular subalgebra of $B(\mathcal H)$. 
From Example \ref{exam6.1}, $B(\mathcal H)$ can also be realized as a crossed product $l^\infty(\mathbb Z) \rtimes_\alpha \mathbb Z$ of $l^\infty(\mathbb Z)$ by an action $\alpha$, where the action $\alpha$ is determined by
$$
\alpha(f)(m)=f(m-1), \qquad \forall \ f\in l^\infty(\mathbb Z).
$$ Moreover, it can be verified quickly that $\mathcal A$, as a subalgebra of $B(\mathcal H)$, is $ l^\infty(\mathbb Z) \rtimes_\alpha \mathbb Z_+ $
(see  Lemma \ref{lemma6.1}) is  a semifinite subdiagonal subalgebra of $l^\infty(\mathbb Z) \rtimes_\alpha \mathbb Z $ (see Example 2.6 in \cite{MMS}). Thus from Theorem \ref{theorem6.2}, we have the following statements.

\begin{corollary} \label{cor6.3}
Let $\mathcal H$ be a separable Hilbert space with  an orthonormal base $\{e_m\}_{m\in\mathbb Z}$.
 Let $H^\infty$ be the lower triangular subalgebra of $B(\mathcal H)$, i.e.
$$
H^\infty =\{ x\in B(\mathcal H) : \langle xe_m, e_n\rangle =0, \ \ \forall n<m\}.
$$
 Let $\mathcal D=H^\infty\cap (H^\infty)^*$ be the diagonal subalgebra of $B(\mathcal H)$.
\begin{enumerate}\item [(i)]  For each $0<p< \infty$, let $S^p(\mathcal H)$  be the Schatten $p$-class.  Assume that $\mathcal K  $ is a closed
subspace of $S^p(\mathcal H)$ such that $H^\infty \mathcal K
\subseteq \mathcal K$.
             Then there exist a   projection $q$ in $\mathcal D$  and    a    family $\{u_\lambda\}_{\lambda \in\Lambda}$ of partial isometries in
             $ B(\mathcal H)$  satisfying
            \begin{enumerate}
              \item [(a)]  $u_\lambda q=0$  for all $\lambda \in \Lambda$;
              \item  [(b)]  $u_\lambda u_\lambda^*\in  \mathcal D  $   and $u_\lambda u_\mu^*=0$
for all $\lambda, \mu\in \Lambda$ with $\lambda\ne \mu$;
                \item [(c)] $\mathcal K=(S^p(\mathcal H)q) \oplus^{row}(  \oplus^{row}_{\lambda\in\Lambda } H^p u_\lambda).$
            \end{enumerate}
\item [(ii)]   Assume that $\mathcal K  $ is a weak $*$-closed
subspace of $ B(\mathcal H) $ such that $H^\infty \mathcal K
\subseteq \mathcal K$.
             Then there  exist a projection $q$ in $\mathcal D$  and   a    family $\{u_\lambda\}_{\lambda \in\Lambda}$ of partial isometries in
             $ B(\mathcal H)$  satisfying
            \begin{enumerate}
             \item [(a)]  $u_\lambda q=0$  for all $\lambda \in \Lambda$;
              \item  [(b)]  $u_\lambda u_\lambda^*\in \mathcal D$ and $u_\lambda u_\mu^*=0$
for all $\lambda, \mu\in \Lambda$ with $\lambda\ne \mu$;
                \item [(c)] $\mathcal K= ( B(\mathcal H)q )\oplus^{row}( \oplus^{row}_{\lambda\in\Lambda } H^\infty u_\lambda).$
            \end{enumerate}
\end{enumerate}

\end{corollary}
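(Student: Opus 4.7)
The plan is to derive this corollary as a direct specialization of Theorem \ref{theorem6.2}. The key step is the identification, supplied by Example \ref{exam6.1}, of $B(\mathcal H)$ with the crossed product $l^\infty(\mathbb Z) \rtimes_\alpha \mathbb Z$, where $\alpha$ is the shift $\alpha(f)(m)=f(m-1)$ on $l^\infty(\mathbb Z)$. Under this isomorphism the usual trace $Tr$ on $B(\mathcal H)$ corresponds to the canonical semifinite, faithful, normal extended tracial weight on the crossed product, and the canonical conditional expectation $\Phi$ onto $\mathcal M = l^\infty(\mathbb Z)$ restricts a bounded operator to its diagonal; in particular $\mathcal M$ is identified with the diagonal subalgebra $\mathcal D = H^\infty \cap (H^\infty)^*$.

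Next I would verify that the lower triangular subalgebra $H^\infty$ of $B(\mathcal H)$ coincides, under this identification, with the analytic crossed product $l^\infty(\mathbb Z) \rtimes_\alpha \mathbb Z_+$ generated by $\{\Lambda(n)\Psi(f) : f \in l^\infty(\mathbb Z),\ n \ge 0\}$. This is essentially the content of Example 2.6 in \cite{MMS} referenced just above the corollary: one computes directly from the formulas for $\Lambda(n)$ and $\Psi(f)$ that a monomial $\Lambda(n)\Psi(f)$ with $n \ge 0$ has matrix in the basis $\{e_m\}$ supported on or below the main diagonal, while weak$^*$-closed linear combinations exhaust $H^\infty$. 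Lemma \ref{lemma6.1} then guarantees that $H^\infty$ is a semifinite subdiagonal subalgebra of $B(\mathcal H)$ with $H^\infty \cap (H^\infty)^* = l^\infty(\mathbb Z) = \mathcal D$, so every hypothesis needed to invoke Theorem \ref{theorem6.2} is satisfied.

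With these identifications in place, both parts (i) and (ii) follow immediately from the respective parts of Theorem \ref{theorem6.2}, using that $S^p(\mathcal H) = L^p(B(\mathcal H),\tau)$ by definition. The projection $q \in \mathcal M$ produced by Theorem \ref{theorem6.2} is automatically a projection in $\mathcal D$, and the family of partial isometries $\{u_\lambda\}_{\lambda \in \Lambda}$ together with the row-sum decomposition is inherited verbatim. I do not anticipate any substantive analytic obstacle, since the hard work is already packaged in Theorem \ref{theorem6.2}; the only point demanding care is matching conventions consistently, so that the $\mathbb Z_+$ generators $\Lambda(n)$ with $n \ge 0$ are verified to correspond to the subdiagonal half of $B(\mathcal H)$ rather than the superdiagonal half, and so that the shift direction in $\alpha$ is compatible with the stated orientation of $H^\infty$ via the convention $\langle x e_m, e_n\rangle = 0$ for $n < m$.
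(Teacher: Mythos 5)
Your proposal is correct and follows essentially the same route as the paper: the paper likewise realizes $B(\mathcal H)$ as $l^\infty(\mathbb Z)\rtimes_\alpha\mathbb Z$ via Example \ref{exam6.1}, identifies the lower triangular algebra with the analytic crossed product $l^\infty(\mathbb Z)\rtimes_\alpha\mathbb Z_+$ (citing Example 2.6 of \cite{MMS} and Lemma \ref{lemma6.1}), and then reads off the corollary from Theorem \ref{theorem6.2}. Your extra attention to matching the shift direction with the convention $\langle xe_m,e_n\rangle=0$ for $n<m$ is a reasonable precaution but introduces nothing beyond what the paper does.
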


\begin{remark}Let   $0<p< \infty$.
If $q$ is a projection in $\mathcal D$ such that $S^p(\mathcal H)q\subseteq H^p$, then $q=0$.
\end{remark}

The the next result follows directly from Corollary \ref{cor6.3}.

\begin{corollary} \label{cor6.4}
Let $\mathcal H$ be a separable Hilbert space with  an orthonormal base  $\{e_m\}_{m\in\mathbb Z}$.
 Let $H^\infty$ be the lower triangular subalgebra of $B(\mathcal H)$, i.e.
$$
H^\infty =\{ x\in B(\mathcal H) : \langle xe_m, e_n\rangle =0, \ \ \forall n<m\}.
$$ Let $\mathcal D=H^\infty\cap (H^\infty)^*$ be the diagonal subalgebra of $B(\mathcal H)$.
\begin{enumerate}\item [(i)]  For each $0<p< \infty$, if $\mathcal K  $ is a closed
subspace of $H^p$ such that $H^\infty \mathcal K
\subseteq \mathcal K$,
             then there exists   a    family $\{u_\lambda\}_{\lambda \in\Lambda}$ of partial isometries in
             $ H^\infty$  satisfying
            \begin{enumerate}
              \item  [(a)]  $u_\lambda u_\lambda^*\in  \mathcal D  $   and $u_\lambda u_\mu^*=0$
for all $\lambda, \mu\in \Lambda$ with $\lambda\ne \mu$;
                \item [(b)] $\mathcal K=  \oplus^{row}_{\lambda\in\Lambda } H^p u_\lambda .$
            \end{enumerate}
\item [(ii)]   Assume that $\mathcal K  $ is a weak $*$-closed
subspace of $ H^\infty $ such that $H^\infty \mathcal K
\subseteq \mathcal K$.
             Then there  exists     a    family $\{u_\lambda\}_{\lambda \in\Lambda}$ of partial isometries in
             $ H^\infty$  satisfying
            \begin{enumerate}
              \item  [(a)]  $u_\lambda u_\lambda^*\in \mathcal D$ and $u_\lambda u_\mu^*=0$
for all $\lambda, \mu\in \Lambda$ with $\lambda\ne \mu$;
                \item [(b)] $\mathcal K=  \oplus^{row}_{\lambda\in\Lambda } H^\infty u_\lambda .$
            \end{enumerate}
\end{enumerate}

\end{corollary}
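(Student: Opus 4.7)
The overall plan is to deduce Corollary \ref{cor6.4} directly from Corollary \ref{cor6.3} by using the stronger hypothesis $\mathcal K\subseteq H^p$ (respectively $\mathcal K\subseteq H^\infty$) to (a) kill the $q$-piece of the decomposition and (b) promote each $u_\lambda$ from $B(\mathcal H)$ into $H^\infty$.

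I would first apply Corollary \ref{cor6.3}(i) to $\mathcal K$, viewed as an $H^\infty$-invariant closed subspace of $S^p(\mathcal H)$. This produces a projection $q\in\mathcal D$ and a family $\{u_\lambda\}_{\lambda\in\Lambda}$ of partial isometries in $B(\mathcal H)$ with $u_\lambda u_\lambda^*\in\mathcal D$, $u_\lambda u_\mu^*=0$ for $\lambda\ne\mu$, $u_\lambda q=0$, and
$$\mathcal K=(S^p(\mathcal H)q)\oplus^{row}\bigl(\oplus^{row}_{\lambda\in\Lambda}H^p u_\lambda\bigr).$$
Part (ii) is handled identically, starting instead from Corollary \ref{cor6.3}(ii) and the decomposition $\mathcal K=(B(\mathcal H)q)\oplus^{row}(\oplus^{row}_{\lambda\in\Lambda}H^\infty u_\lambda)$.

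Next I would show $q=0$. Since $\mathcal K\subseteq H^p$, the inclusion $S^p(\mathcal H)q\subseteq H^p$ holds, and the Remark preceding the Corollary forces $q=0$. The Remark itself is verified by the standard matrix-unit computation: if some $m$ lies in the support of $q$ (so $qe_m=e_m$), then for any $n<m$ the rank-one operator $\xi=e_n\langle\,\cdot\,,e_m\rangle$ lies in $S^p(\mathcal H)$ and satisfies $\xi q=\xi$, but $\xi$ has a nonzero matrix entry at position $(n,m)$ with $n<m$ and is therefore not lower triangular, hence not in $H^\infty$ and, by the inclusion $H^p\cap\mathcal M\subseteq H^\infty$ (see below), not in $H^p$. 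The weak*-closed case in (ii) is handled with $B(\mathcal H)q\subseteq H^\infty$ in place of $S^p(\mathcal H)q\subseteq H^p$ via the same argument.

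Finally, to show each $u_\lambda\in H^\infty$, I would test against the rank-one diagonal projection $p_m=e_m\langle\,\cdot\,,e_m\rangle$, which lies in $\mathcal D\cap S^1(\mathcal H)\subseteq H^p$ for every $0<p\le\infty$. Then $p_m u_\lambda\in H^p u_\lambda\subseteq\mathcal K\subseteq H^p$; being rank one, $p_m u_\lambda$ is bounded, so $p_m u_\lambda\in H^p\cap\mathcal M$. The inclusion $H^p\cap\mathcal M\subseteq H^\infty$ follows from Remark \ref{Remark2.8} by approximating in $\|\cdot\|_p$ and using continuity of $\Phi$ (or, equivalently, Proposition 3.2 in \cite{Be} as invoked in the proof of Lemma \ref{lemma5.3.1}). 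Computing, $p_m u_\lambda$ is the rank-one operator $v\mapsto\langle v,u_\lambda^* e_m\rangle e_m$, so its matrix entry at $(n,k)$ is $\delta_{nm}\overline{\langle u_\lambda^* e_m,e_k\rangle}$; membership in $H^\infty$ forces $\langle u_\lambda^* e_m,e_k\rangle=0$ for $k>m$, i.e.\ $\langle u_\lambda e_k,e_m\rangle=0$ for $k>m$. Letting $m$ range over $\mathbb Z$ yields exactly $u_\lambda\in H^\infty$. The stated decompositions then follow from those of Corollary \ref{cor6.3} upon setting $q=0$.

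The only substantive point is the Remark, which isolates the obstruction that distinguishes subspaces of $H^p$ from subspaces of $L^p(\mathcal M,\tau)$; after it is in hand, the rest of the proof is a transparent refinement of Corollary \ref{cor6.3}.
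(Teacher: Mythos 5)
Your proposal is correct and follows exactly the route the paper intends: Corollary \ref{cor6.4} is stated there as a direct consequence of Corollary \ref{cor6.3} together with the preceding Remark (that $S^p(\mathcal H)q\subseteq H^p$ forces $q=0$), which is precisely your argument. You in fact supply more detail than the paper does --- the matrix-unit verification of the Remark and the test against $p_m u_\lambda$ to promote each $u_\lambda$ into $H^\infty$ are both sound and are the steps the paper leaves implicit.
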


\begin{remark}
Similar results hold  when $H^\infty$ is the upper triangular subalgebra of $B(\mathcal H)$.
\end{remark}

\end{document}